\newcommand{\Exp}{\mathop{\mathbb{E}}}
\newcommand{\C}{\ensuremath{\mathcal{C}}}
\newcommand{\poly}{\ensuremath{\mathsf{poly}}}
\newcommand{\R}{\ensuremath{\mathbb{R}}}
\newcommand{\vspan}{\ensuremath{\mathsf{span}}}
\def\01{\{0,1\}}
\newcommand{\be}{\begin{equation}}
\newcommand{\ee}{\end{equation}}
\newcommand{\ba}{\begin{array}}
\newcommand{\ea}{\end{array}}
\newcommand{\bea}{\begin{eqnarray}}
\newcommand{\eea}{\end{eqnarray}}
\newcommand{\ra}{\rangle}
\newcommand{\la}{\langle}
\newcommand{\PFR}{\textsf{PFR}}
\newcommand{\norm}[1]{\left\lVert#1\right\rVert}
\newcommand{\F}{{\mathbb{F} }}
\newcommand{\FF}{\mathbb{F}}
\newcommand{\one}{\mathbf{1}}
\newtheorem{theorem}{Theorem}[section]
\newtheorem{definition}[theorem]{Definition}
\newtheorem{lemma}[theorem]{Lemma}
\newtheorem{claim}[theorem]{Claim}
\def\widebreve{\mathpalette\wide@breve}
\def\wide@breve#1#2{\sbox\z@{$#1#2$}%
     \mathop{\vbox{\m@th\ialign{##\crcr
\kern0.08em\brevefill#1{0.8\wd\z@}\crcr\noalign{\nointerlineskip}%
                    $\hss#1#2\hss$\crcr}}}\nolimits}
\def\brevefill#1#2{$\m@th\sbox\tw@{$#1($}%
  \hss\resizebox{#2}{\wd\tw@}{\rotatebox[origin=c]{90}{\upshape(}}\hss$}
\title{Algorithmic Polynomial Freiman-Ruzsa Theorems}
\author{
Srinivasan Arunachalam\thanks{IBM Quantum.}
\and
Davi Castro-Silva\thanks{University of Cambridge.\\DCS is supported by ESPRC Robust and Reliable Quantum Computing Grant.\\  TG is supported by ERC Starting Grant 101163189 and UKRI Future Leaders Fellowship MR/X023583/1.}
\and
Arkopal Dutt$^*$
\and
Tom Gur$^\dagger$ 
}
\begin{document}

\maketitle

\begin{abstract}
  We prove algorithmic versions of the polynomial Freiman-Ruzsa theorem of Gowers, Green, Manners, and Tao (Annals of Mathematics, 2025) in additive combinatorics.
    In particular, we give classical and quantum polynomial-time algorithms that, for $A \subseteq \mathbb{F}_2^n$ with doubling constant~$K$, learn an explicit description of a subspace $V \subseteq \mathbb{F}_2^n$ of size $|V| \leq |A|$ such that $A$ can be covered by $K^C$     translates of $V$, for a universal constant $C>1$.
\end{abstract}

\section{Introduction}
The Freiman-Ruzsa theorem \cite{freiman1987structure,ruzsa1999analog} is a cornerstone of additive combinatorics with diverse applications to theoretical computer science (cf. \cite{lovett2015exposition}). Loosely speaking, the theorem shows that sets exhibiting approximate combinatorial subgroup behaviour must be algebraically structured.  To make this precise, recall that  a set $A$ has doubling constant $K$ if $|A+A|\le K|A|$, where  $A+A = \{a+a' \;;\; a,a' \in A\}$. Note that $A$ has doubling constant $1$ if and only if it is a subgroup or a coset of a subgroup, and in turn, the doubling constant of $A$ can be thought of as a combinatorial measure of the approximate subgroup behaviour of sets. In this paper, we focus on subsets of $\mathbb{F}_2^n$. In this setting, the Freiman-Ruzsa theorem states that sets $A \subseteq \F_2^n$ with $|A+A|\le K|A|$ is covered by $\exp(K)$ translates of a subspace $V \subset \F_2^n$ of size $|V| \leq |A|$.

Marton conjectured that the aforementioned dependency in $K$ can be improved to a polynomial, in what became widely known as the Polynomial Freiman-Ruzsa (PFR) conjecture. Over a decade later, Sanders proved a quasipolynomial Bogolyubov-Ruzsa theorem, which implies a version of the Freiman-Ruzsa theorem with quasipolynomial dependency on the doubling constant $K$. In a recent breakthrough, the $\PFR$ conjecture was proved by Gowers, Green, Manners, and Tao.

\begin{theorem}[Combinatorial $\PFR$ theorem~\cite{gowers2023conjecture}]
\label{thm:marton_conjecture}
    There exists a polynomial $P_0: \R_+ \to \R_+$ such that the following holds.
    For any $n\geq 1$, if $A \subseteq \F_2^{n}$ satisfies $|A + A| \leq K |A|$, then $A$ is covered by at most $P_0(K)$ translates of a subspace $V \subset \F_2^n$ of size $|V| \leq |A|$.
\end{theorem}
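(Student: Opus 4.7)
The plan is to reduce the combinatorial statement to an \emph{entropic Freiman--Ruzsa inequality}, establish that via a minimization argument over random variables, and then return to combinatorial covering via Ruzsa's covering lemma.

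I would start by translating the hypothesis into the language of entropy. Let $X$ be uniform on $A$ and $X'$ an independent copy. Then $|A+A|\le K|A|$ gives $H(X+X')\le H(X)+\log K$, equivalently $d(X;X)\le \log K$, where $d(X;Y)=H(X+Y)-\tfrac12 H(X)-\tfrac12 H(Y)$ is the entropic Ruzsa distance. The entropic target is: if $d(X;X)\le \log K$, then there exists a coset $c+V\subseteq \F_2^n$ with $|V|$ within a $K^{O(1)}$ factor of $2^{H(X)}$, and $d(X;U_{c+V})=O(\log K)$, where $U_S$ denotes a uniform random variable on $S$.

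To prove this entropic statement I would fix the reference $X_0=U_A$ and a small $\eta>0$, and minimize the functional
\[
\tau[X;Y]\;=\;d(X;Y)\;+\;\eta\bigl(d(X;X_0)+d(Y;X_0)\bigr)
\]
over all $\F_2^n$-valued pairs $(X,Y)$. A minimizer $(X^*,Y^*)$ exists by compactness. The substance of the plan is to exploit first-order optimality: I would use a \emph{fibering identity} decomposing $H(X+Y)$ along natural conditional projections, combined with Pl\"unnecke--Ruzsa-type entropy inequalities, to show that unless the conditional laws of $X^*$ given $X^*+Y^*$ are very close to translates of a fixed distribution, one can strictly decrease $\tau$. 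Iterating, $X^*$ becomes approximately translation-invariant under a large set of shifts; the set of such shifts is then shown to be essentially a subspace $V$, and $X^*$ to be approximately uniform on a translate of $V$. Comparing $\tau[X^*;Y^*]$ to $\tau[X_0;X_0]$ transports this structure back to $X_0$ with only a polynomial-in-$K$ loss.

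I expect the main obstacle to be quantitative: making the entropy decrement produce only a polynomial slack at each step, so that the final entropic distance is $O(\log K)$ rather than polynomial in $K$. This is precisely where the classical Fourier-analytic route of Sanders stalls at quasipolynomial bounds; choosing the functional $\tau$ correctly and proving the sharp form of the fibering inequalities it relies on is where the real work lies. Finally, to return to combinatorics, $d(U_A; U_{c+V})=O(\log K)$ yields $|A+(c+V)|\le K^{O(1)}\sqrt{|A|\cdot|V|}$, so Pl\"unnecke--Ruzsa gives control of all relevant iterated sumsets by $K^{O(1)}|A|$. Ruzsa's covering lemma then produces a set $T$ with $|T|\le K^{O(1)}$ such that $A\subseteq T+V-V=T+V$, using that $V$ is an $\F_2$-subspace. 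If $|V|>|A|$, one can intersect $V$ with a generic subspace of dimension $\lceil \log_2|A|\rceil$ at the cost of at most another $K^{O(1)}$ factor in the number of translates, ensuring $|V|\le |A|$ and yielding the required covering by $P_0(K)=K^{O(1)}$ translates.
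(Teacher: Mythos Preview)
The paper does not prove this theorem at all: Theorem~\ref{thm:marton_conjecture} is quoted with a citation to \cite{gowers2023conjecture} and used throughout as a black box (for instance inside the proof of Lemma~\ref{lem:closeaffine}). There is therefore no ``paper's own proof'' to compare against.

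That said, your outline does track the actual entropic strategy of Gowers--Green--Manners--Tao: the reformulation via the entropic Ruzsa distance $d(X;X)\le \log K$, the minimization of the penalized functional $\tau$, the fibering/endgame analysis forcing the minimizer to be essentially uniform on a coset, and the return to combinatorics via Ruzsa covering are all the correct landmarks. Where your sketch is thinnest is exactly where the real paper is longest: the sentence ``Iterating, $X^*$ becomes approximately translation-invariant under a large set of shifts; the set of such shifts is then shown to be essentially a subspace $V$'' hides the entire endgame (the $100\%$ inverse theorem for entropic doubling and the careful bookkeeping showing that each reduction step costs only an additive $O(d(X;Y))$ in entropy, not a multiplicative factor). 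As a proof \emph{proposal} it is accurate in spirit; as a proof it is not yet one, and in any case the present paper never intended you to supply it.
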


A key reason for the importance of the $\PFR$ theorem in additive combinatorics is that it provides means to transition from a combinatorial notion of approximate subgroup structure, captured by constant doubling, to an algebraic notion, captured by a bounded subspace-cover, at only a polynomial cost. 

\subsection{Algorithmic $\PFR$}
The $\PFR$ theorem (and the closely-related quasi-polynomial Bogolyubov-Ruzsa theorem) also provide powerful tools that found diverse applications to theoretical computer science, including linearity testing of maps $f \colon \F_2^n \to \F_2^m$ \cite{samorodnitsky2007low}, constructions of two-source extractors from affine extractors \cite{zewi2011affine}, communication complexity lower bounds \cite{ben2014additive},
super-polynomial lower bounds on locally decodable codes \cite{bhowmick2013new}, constructions of non-malleable codes \cite{aggarwal2014non}, higher-order Goldreich-Levin theorems \cite{tulsiani2014quadratic,kim2023cubic,briet2025near}, sparsification algorithms for 1-in-3-SAT \cite{bedert2025strong}, quantum proofs for classical theorems \cite{drucker2009quantum}, quantum and classical worst-case to average-case reductions \cite{asadi2022worst,asadi2024quantum}, tolerant quantum testing of stabilizer states~\cite{ad2024tolerant}, quantum learning of structured stabilizer decompositions \cite{SCforstabilizers}, and beyond.

However, when considering applications of the $\PFR$ theorem and similar tools to theoretical computer science as above, it is often necessary or desirable to have an efficient algorithmic statement, where an explicit description of the subspace can be learned efficiently, as opposed to an existential combinatorial statement. Indeed, the naive algorithm that extracts the subspace runs in time $O(2^n)$. Fortunately, for the quasi-polynomial Bogolyubov-Ruzsa theorem, Ben-Sasson, Ron-Zewi, Tulsiani, and Wolf \cite{ben2014sampling} showed an algorithmic version, which extracts a subspace in time $O(n^3 \log n)$.

The above motivates a natural question that arose after the resolution of the $\PFR$ conjecture~\cite{gowers2023conjecture}. Namely, now that we know that a subset $A$ of constant doubling can be covered by at most a polynomial number of translates of a subspace $H \subset \F_2^n$ of size $|H| \leq |A|$, \emph{can we learn the subspace $H$ efficiently?}. We refer to this as the ``algorithmic $\PFR$ question." Our main contribution answers this question by proving an algorithmic version of the polynomial Freiman-Ruzsa theorem, where the covering subspace can be learned explicitly in $\poly(n)$-time. In the following, a query to a set $A \subseteq \F_2^n$ is an evaluation of the characteristic function $\one_A(x)$ for a chosen $x \in \F_2^n$, and a random sample from $A$ is a uniformly chosen element $a\in A$.

\begin{restatable}{theorem}{classicalPFR}
\label{thm:classicalPFR} (Algorithmic $\PFR$)
    There exists a polynomial $P_1: \R_+ \to \R_+$ such that the following holds.
    Let $A \subseteq \mathbb{F}_2^n$ satisfy $|A + A| \leq K|A|$ for a doubling constant $K \geq 1$.
    There is an $\tilde{O}(n^4)$-time randomized algorithm that uses $O(\log|A|)$ random samples and $\tilde{O}(\log^2 |A|)$ queries to $A$ which, with probability at least $2/3$, returns a subspace $V \leq \mathbb{F}_2^n$ of size at most $|A|$ such that $A$ can be covered by $P_1(K)$ translates of $V$.
\end{restatable}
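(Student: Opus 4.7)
The plan is to construct $V$ as the annihilator of a small, algorithmically discovered set of heavy Fourier characters of $\mathbf{1}_A$, combining query access to $A$ with the structural input of Theorem~\ref{thm:marton_conjecture}. Applied as a black box, the combinatorial $\PFR$ theorem gives an existential subspace $V^\star \leq \F_2^n$ with $|V^\star| \leq |A|$ and a translate set $T^\star$ with $|T^\star| \leq P_0(K)$ such that $A \subseteq T^\star + V^\star$. This forces the Fourier mass of $\mathbf{1}_A$ (equivalently, of the iterated convolution $\mathbf{1}_A \ast \mathbf{1}_{-A} \ast \mathbf{1}_A \ast \mathbf{1}_{-A}$) to concentrate on characters in $(V^\star)^\perp$, and a Chang-type / Plünnecke-Ruzsa calculation shows that the large spectrum $\Lambda_\eta = \{\chi : |\widehat{\mathbf{1}}_A(\chi)|^2 \geq \eta \, |A|^2 / 2^{2n}\}$ has cardinality at most $\poly(K)$ for a threshold $\eta = \poly(1/K)$.

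Given this structural picture, the algorithm runs in three phases. First, invoke the Goldreich-Levin algorithm (classical or quantum) on $\mathbf{1}_A$ with threshold $\eta$ to recover a set $S \supseteq \Lambda_\eta$ of size $\poly(K)$, using $\tilde O(\log^2 |A|)$ queries and $O(\log|A|)$ samples to estimate Fourier inner products in overall time $\tilde O(n^4)$. Second, set $V := \bigcap_{\chi \in S} \ker \chi$; argue that $V^\star$ is essentially contained in $V$ since any character outside $(V^\star)^\perp$ carries negligible Fourier weight and is therefore omitted from $S$, and bound $|V|$ polynomially in $|V^\star|$ via a Ruzsa-type calculation. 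If $|V| > |A|$, trim $V$ by projecting out $\lceil \log_2(|V|/|A|) \rceil$ further directions drawn from the discovered spectrum; this multiplies the covering number by a further $\poly(K)$. The covering bound $|A| \leq P_1(K)\, |V|$ then follows from the $P_0(K)$-cover of $A$ by $V^\star$ together with $V^\star \leq V$. Third, verify by drawing $O(\log|A|)$ fresh random samples from $A$ and counting distinct cosets modulo $V$; constant repetition boosts success to $2/3$.

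The principal obstacle will be bridging the existential-to-algorithmic gap without incurring a quasipolynomial loss of the sort that limits the Ben-Sasson--Ron-Zewi--Tulsiani--Wolf~\cite{ben2014sampling} approach to Bogolyubov-Ruzsa. Goldreich-Levin gives polynomial control over $|S|$ in terms of $\eta^{-1}$, and the $\PFR$ theorem pins down the Fourier mass outside $(V^\star)^\perp$; the delicate coupling is to tune $\eta$ and the trimming procedure so that simultaneously $|V| \leq |A|$ and the covering number remains a fixed polynomial in $K$. A secondary difficulty is that the trimming directions cannot be chosen arbitrarily---they must respect the additive structure of $A$, so an inner sampling loop is needed to identify directions whose removal does not collapse the covering guarantee. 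Propagating these polynomial bounds through the error analysis of the Fourier-heavy-hitters oracle, without multiplicative blow-up across the iterations, is where the bulk of the technical work should live.
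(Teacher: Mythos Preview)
Your plan rests on two Fourier-analytic claims that both fail. First, the assertion that $|\Lambda_\eta| \leq \poly(K)$ is false already when $A$ is itself a subspace: if $A \leq \F_2^n$ has dimension $d$, then $K=1$ and $\widehat{\mathbf 1}_A$ is supported \emph{uniformly} on $A^\perp$, so $|\Lambda_\eta| = 2^{n-d}$ for every $\eta \leq 1$. The Parseval bound on the number of heavy characters is $1/(\eta\alpha)$ with $\alpha=|A|/2^n$, and Chang's lemma only controls the \emph{dimension} of the span of $\Lambda_\eta$ (with an unavoidable $\log(1/\alpha)$ factor), never its cardinality. Second, the inclusion $\Lambda_\eta \subseteq (V^\star)^\perp$ does not follow from $A \subseteq T^\star + V^\star$: a covering by cosets of $V^\star$ places no constraint on how $A$ sits \emph{inside} each coset, so characters outside $(V^\star)^\perp$ can carry arbitrarily large mass. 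Concretely, take $A$ a random half of a subspace $V^\star$, or more simply a sub-subspace of $V^\star$; then heavy characters of $\mathbf 1_A$ appear outside $(V^\star)^\perp$. Once these two claims fall, there is no reason the annihilator of the recovered heavy characters should be a covering subspace at all, let alone one of size at most $|A|$, and your ``trim by further spectral directions'' step has no directions left to use (e.g.\ when $A$ is a random set of density $1/2$, the only heavy character is $0$).

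The paper does something quite different, and the difference is essential rather than cosmetic. It first \emph{localizes} $A$ via random sampling to $A'=A\cap U$ with density $2^{-O(K)}$ inside $U$ (Lemma~\ref{lem:spansample}), then passes through a random linear map to a dense Freiman-isomorphic model $S\subseteq\F_2^m$ (Lemma~\ref{lem:randomFreiman}); this is why the query complexity is $\poly(\log|A|)$ rather than $\poly(n)$. Crucially, it then applies not linear Goldreich--Levin to $\mathbf 1_A$, but the \emph{quadratic} Goldreich--Levin theorem (Theorem~\ref{thm:quadratic_GL}) to the auxiliary function $g(x,y)=\mathbf 1_S(x)(-1)^{f(x)\cdot y}$ on $\F_2^{m+n}$, extracting the covering subspace from the bilinear part of the correlating quadratic (Lemma~\ref{lem:findingaffine}). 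This passage through $U^3$-structure---equivalently, through the inverse Gowers formulation of $\PFR$ and the dequantized stabilizer-learning algorithm---is precisely what linear spectral methods cannot supply; the latter are what underlie the quasipolynomial Bogolyubov--Ruzsa approach of~\cite{ben2014sampling}, and they yield a subspace \emph{inside} $4A$ rather than one whose translates \emph{cover} $A$.
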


We remark that the probability of success $2/3$ is arbitrarily chosen and can be amplified via standard error-reduction techniques. The algorithm returns the subspace $V$ specified by an explicit basis. As typically viewed in additive combinatorics, the doubling constant $K$ is a constant independent of $n$ (as constant doubling implies structure), and in turn our asymptotic notation suppresses factors of $K$.

\paragraph{Quantum algorithms.}
En route to obtaining our algorithmic $\PFR$ theorem, we obtain quantum algorithms with  query complexity (which we then dequantize; see the techniques section) and time complexity\footnote{In the context of quantum algorithms, by time we mean the total number of single and two-qubit quantum gates used in the quantum algorithm.} that is a factor-$n$ lesser than the classical algorithms.
Namely, we show the following~theorem.
\begin{restatable}{theorem}{quantumPFR}
\label{thm:quantumPFR} (Quantum Algorithmic $\PFR$)
    There exists a polynomial $P_1: \R_+ \to \R_+$ such that the following holds.
    Let $A \subseteq \mathbb{F}_2^n$ satisfy $|A + A| \leq K|A|$ for a doubling constant $K \geq 1$.
    There is an $O(n^3)$-time quantum algorithm that uses $O(\log|A|)$ random samples and quantum queries to $A$ which, with probability at least $2/3$, returns a subspace $V \leq \mathbb{F}_2^n$ of size at most $|A|$ such that $A$ can be covered by $P_1(K)$ translates of $V$.
\end{restatable}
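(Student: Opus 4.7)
The plan is to design a quantum algorithm whose core primitive is Fourier sampling over $\F_2^n$ applied to coset-like states built from $A$, with correctness certified via Theorem~\ref{thm:marton_conjecture}. The starting observation is that the combinatorial PFR theorem already guarantees existence of a subspace $V$ with the desired covering property; the algorithm's task is to extract an explicit basis for $V$ from the approximate coset structure inside $A$. The $O(\log|A|)$ random samples supply base points and sumset estimates, while the quantum queries to $\one_A$ enable the coherent state preparation that produces the factor-$n$ speedup over the classical algorithm of Theorem~\ref{thm:classicalPFR}.

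First, I would use a small number of random samples together with a constant number of quantum queries per round to prepare a state whose Hadamard transform is concentrated on $V^\perp$. Natural candidates are superpositions of the form $\tfrac{1}{\sqrt{|A|}}\sum_{a \in A} |a\rangle$, or sign-weighted variants $\sum_{a \in A}(-1)^{\la r, a\ra}|a\rangle$ for a random sample $r$, which can be realized by rejection sampling or Grover-style amplitude amplification using the quantum oracle for $\one_A$. Applying Hadamard gates across all $n$ qubits and measuring yields a vector $v \in \F_2^n$ that, with constant probability, lies in $V^\perp$ (or in the dual of a subspace closely related to the one supplied by Theorem~\ref{thm:marton_conjecture}). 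Iterating this sampling step $O(n + \log|A|)$ times accumulates enough independent dual vectors, and a single Gaussian elimination in $O(n^3)$ time then recovers a basis for $V^\perp$ and hence for $V$, matching the claimed $O(n^3)$ gate bound. The covering and size conditions $|V|\leq |A|$ and $A \subseteq V + T$ with $|T|\leq P_1(K)$ are then verified by appealing to Theorem~\ref{thm:marton_conjecture}, possibly using a handful of additional queries to $\one_A$ to test membership in candidate translates.

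The hard part will be the Fourier-analytic bound ensuring that Hadamard measurement outcomes concentrate on $V^\perp$ with probabilistic advantage depending polynomially on $K$, despite $A$ being only approximately closed under addition. This is precisely the sort of spectral concentration underlying classical sampling proofs of Bogolyubov-Ruzsa (as in~\cite{ben2014sampling}), now sharpened to the PFR regime of~\cite{gowers2023conjecture}. A secondary hurdle is building the quantum state-preparation subroutine within the tight budget of $O(\log|A|)$ random samples and polynomially many quantum queries: this constrains the amount of amplitude amplification one can afford, and may force the preparation strategy to trade random samples for queries to $\one_A$ while preserving the coset-like Fourier profile needed for the dual vectors to be useful.
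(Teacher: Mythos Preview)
Your plan diverges completely from the paper's and has a gap at its core: the assertion that Hadamard sampling on $|A\rangle$ yields vectors concentrated in $V^\perp$, for some PFR covering subspace $V$, is neither proved nor true in general. The large Fourier coefficients of $\one_A$ need not lie in $V^\perp$ for any $V$ satisfying the PFR conclusion: the covering $A \subseteq T + V$ with $|T|$ small says nothing about how $A$ sits inside each coset $t+V$, so the Fourier mass of $\one_A$ can leak arbitrarily far outside $V^\perp$. The Bogolyubov--Ruzsa machinery you invoke produces a large subspace \emph{inside} $4A$ (the dual picture), not a small subspace whose cosets \emph{cover} $A$; passing between these two pictures is precisely the non-algorithmic content of PFR, and linear Fourier sampling does not bridge it. Your proposal also tacitly assumes $A$ is dense enough that $|A\rangle$ can be prepared in few queries and that the $\dim V^\perp \geq n - \log|A|$ independent Fourier samples needed to span $V^\perp$ fit in an $O(\log|A|)$ query budget; both fail when $|A| \ll 2^n$.

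The paper's route is entirely different and never touches linear Fourier sampling. After localizing $A$ into a subspace $U$ of dimension $O(\log|A|+K)$ via random samples (Lemma~\ref{lem:spansample}) and passing to a dense Freiman-isomorphic model $S \subseteq \F_2^m$ with inverse map $f:S\to A'$ (Lemma~\ref{lem:randomFreiman}), the quantum algorithm prepares the $(m+n)$-qubit state $|\psi\rangle \propto \sum_{x\in S,\, y}(-1)^{f(x)\cdot y}|x,y\rangle$ and feeds copies of it to the agnostic stabilizer-learning procedure of Chen--Gong--Ye--Zhang (Theorem~\ref{thm:sitanbootstrapping}). That procedure returns a nearby stabilizer state, from which one extracts a quadratic $q(x,y)$ correlating with $\one_S(x)(-1)^{f(x)\cdot y}$; the bilinear part $y^T M x$ of $q$ then supplies the matrix $M$ whose column span is the desired $V$ (Lemma~\ref{lem:findingfreimanquantum}). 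The quantum speedup thus comes from stabilizer learning---effectively a quantum quadratic Goldreich--Levin---rather than from any linear-Fourier shortcut.
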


\paragraph{Optimality.} We complement our algorithms by also showing that both of our classical and quantum algorithms are asymptotically optimal in terms of the \emph{query complexity} dependence in $n$, up to a logarithmic factor. 
In particular, we show that $\Omega(n^2)$ queries to the set $A$ are  necessary for classical algorithms in order to output the subspace $V$. Similarly, we show that $\Omega(n)$ quantum queries to the set $A$ are necessary. Our lower bounds are proven via a simple information-theoretic argument.  Finally,  we note that random samples are necessary to hit $A$ in the case it is sparse, and information-theoretically, it is necessary to obtain $\Omega(\log|A|)$ samples from $A$ in order to hit at least a basis for~$A$. See Section~\ref{sec:lowerbounds} for the lower bounds.

\subsection{Homomorphism testing and structure-vs-randomness decomposition}
\label{sec:app}
A key reason for the power and centrality of the $\PFR$ theorem is its applications to deriving strong structural theorems regarding homomorphism testing and structure-vs-randomness decomposition. However, while the $\PFR$ theorem is known to be equivalent to the aforementioned structural theorems \cite{green2004finite,green2005notes}, the equivalences are not trivially algorithmic. Nonetheless, we provide versions of these theorems that admit efficient algorithms.

The first theorem is concerned with local-to-global phenomena. Namely, it shows that if a map $f: \F_2^m \to \F_2^n$ satisfies a local affine-linear constraint with a significant probability, then it must be globally close, in fractional distance, to an affine-linear map, which can be efficiently learned. This statement is useful because it connects the $\PFR$ theorem to property testing and coding theory.

\begin{theorem}[Homomorphism testing]
\label{thm:PFR2}
    There exists a polynomial $P_2: \R_+ \to \R_+$ such that the following holds.
    Let $f: \F_2^m \to \F_2^n$ satisfy
    $$\Pr_{x_1+x_2 = x_3+x_4} \big[f(x_1)+f(x_2) = f(x_3)+f(x_4)\big] \geq 1/K.$$
    Then, there is an affine-linear function $g: \F_2^m \to \F_2^n$ such that $f(x) = g(x)$ for at least $2^m/P_2(K)$ values of $x\in \F_2^m$. Furthermore, there is an $\tilde{O}((m+n)^3)$-time randomized algorithm that, with probability at least $2/3$, 
    learns a concise representation of $g$.
\end{theorem}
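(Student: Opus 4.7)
The plan is to reduce the problem to the algorithmic $\PFR$ theorem (Theorem~\ref{thm:classicalPFR}) via a Ruzsa-style graph argument. Define the graph $\Gamma_f = \{(x, f(x)) : x \in \F_2^m\} \subseteq \F_2^{m+n}$, so that $|\Gamma_f| = 2^m$. The hypothesis on $f$ translates directly into a lower bound on the additive energy of $\Gamma_f$: the equation $x_1+x_2 = x_3+x_4$ on the first coordinates admits exactly $2^{3m}$ solutions, so the assumption yields $E(\Gamma_f) \geq 2^{3m}/K = |\Gamma_f|^3/K$ additive quadruples in $\Gamma_f$.

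Next, invoke the algorithmic Balog--Szemer\'edi--Gowers theorem (in the form of Ben-Sasson, Ron-Zewi, Tulsiani, and Wolf~\cite{ben2014sampling}) to extract, from sample access to $\Gamma_f$, a subset $A' \subseteq \Gamma_f$ of size $|A'| \geq |\Gamma_f|/\poly(K)$ with doubling $|A'+A'| \leq \poly(K) \cdot |A'|$, together with the sample and membership oracles needed to run Theorem~\ref{thm:classicalPFR}. Applying the algorithmic $\PFR$ theorem to $A'$ produces a subspace $V \leq \F_2^{m+n}$ with $|V| \leq |A'| \leq 2^m$ and a cover $A' \subseteq \bigcup_{i=1}^{\poly(K)} (v_i + V)$. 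By sampling $\Gamma_f$ and testing membership in each translate, a pigeonhole step then identifies a single translate $v+V$ containing at least $2^m/\poly(K)$ points of $\Gamma_f$.

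To extract an affine-linear function, consider the projection $\pi_1: \F_2^{m+n} \to \F_2^m$, which is injective on $\Gamma_f$. Let $W = \pi_1(V) \leq \F_2^m$ and $V_0 = V \cap (\{0\} \times \F_2^n)$. Injectivity of $\pi_1$ on $\Gamma_f$ forces $|W| \geq 2^m/\poly(K)$, which combined with $|V| \leq 2^m$ gives $|V_0| = |V|/|W| \leq \poly(K)$. Fix a linear splitting $\phi: W \to \F_2^n$ of the short exact sequence $0 \to V_0 \to V \to W \to 0$ and extend it arbitrarily to a linear $\tilde\phi: \F_2^m \to \F_2^n$. A direct unfolding of the defining relations shows that for every $x$ with $(x, f(x)) \in v+V$, the value $f(x) + \tilde\phi(x)$ lies in a single coset of $V_0$, of size at most $\poly(K)$. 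A second pigeonhole over the $\geq 2^m/\poly(K)$ such $x$'s produces a constant $c \in \F_2^n$ for which $g(x) := \tilde\phi(x) + c$ satisfies $f(x) = g(x)$ on at least $2^m/\poly(K)$ inputs, establishing the combinatorial statement for a suitable polynomial $P_2$.

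Algorithmically, each step runs in polynomial time: sampling $\Gamma_f$ costs one query to $f$; algorithmic BSG and Theorem~\ref{thm:classicalPFR} run on $\F_2^{m+n}$ within the claimed budget; and once $V$ is known, computing $W$, $V_0$, and the splitting $\tilde\phi$ reduces to Gaussian elimination, while the optimal $c$ is identified by taking the mode of $f(x) + \tilde\phi(x)$ over $\tilde O(\log)$ random samples of $x$. The main technical obstacle is the interface between algorithmic BSG and Theorem~\ref{thm:classicalPFR}: the set $A'$ produced by BSG is only implicitly defined, so one must verify that a uniform sample from $\Gamma_f$ lies in $A'$ with probability $1/\poly(K)$ and that membership in $A'$ can be decided efficiently from the popularity statistics of additive differences in $\Gamma_f$, thereby simulating the sample and query oracles demanded by Theorem~\ref{thm:classicalPFR}.
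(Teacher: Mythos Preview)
Your combinatorial argument is sound and is, in fact, essentially the proof of Lemma~\ref{lem:closeaffine} in the paper: graph $\Gamma_f$, BSG, combinatorial $\PFR$, then unwind the covering subspace into an affine map via the projection $\pi_1$ and a kernel-size bound. So on the existential side there is nothing to object to.

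Algorithmically, however, the paper proceeds quite differently, and your route has a concrete gap. The paper does \emph{not} black-box an algorithmic BSG followed by Theorem~\ref{thm:classicalPFR}. Instead it proves Theorem~\ref{thm:PFR2} as an immediate corollary of Lemma~\ref{lem:findingaffine} with $S=\F_2^m$: one shows (using the combinatorial argument above only to establish existence) that $g(x,y)=(-1)^{f(x)\cdot y}$ has $1/\poly(K)$ correlation with some quadratic phase, and then runs the quadratic Goldreich--Levin algorithm (Theorem~\ref{thm:quadratic_GL}) directly on $g$ to \emph{find} such a quadratic. The bilinear part of that quadratic yields the matrix $M$, and the shift $v$ is recovered by sampling. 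This sidesteps the need to algorithmize BSG altogether.

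Two consequences for your plan. First, the runtime: Theorem~\ref{thm:classicalPFR} costs $\tilde O(N^4)$ in ambient dimension $N$, so invoking it on $A'\subseteq\F_2^{m+n}$ gives $\tilde O((m+n)^4)$, not the $\tilde O((m+n)^3)$ claimed in the theorem; the paper's direct route via Lemma~\ref{lem:findingaffine} is exactly what saves the factor of $(m+n)$, because with $S=\F_2^m$ there is no localization or Freiman-model inversion overhead. Second, the ``main technical obstacle'' you flag is real and not dispatched: the BSG subset $A'$ is defined through popularity thresholds on convolution values of $\one_{\Gamma_f}$, and turning that into the exact membership oracle that Theorem~\ref{thm:classicalPFR} demands (as opposed to an approximate one) is delicate. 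The paper's approach simply avoids this interface problem by never needing access to $A'$ at all.
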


The second theorem also exhibits local-to-global structure, this time by showing that maps that are locally an approximate homomorphisms admit a structured decomposition, which can be efficiently learned.

\begin{theorem}[Structured approximate homomorphism]
\label{thm:PFR3}
    There exists a polynomial $P_3: \R_+ \to \R_+$ such that the following holds.
    Let $f: \F_2^m \to \F_2^n$ satisfy
    $$\big|\big\{f(x)+f(y)-f(x+y):\: x, y\in \F_2^m \big\}\big| \leq K.$$
    Then $f$ may be written as $g+h$, where $g: \F_2^m \to \F_2^n$ is linear and $|\mathrm{Im}(h)| \leq P_3(K)$. Furthermore, there is an $\tilde{O}((m+n)^3)$-time randomized algorithm that, with probability at least $2/3$, learns a concise representation of $g$.
\end{theorem}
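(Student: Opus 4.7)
The plan is to reduce to Theorem~\ref{thm:classicalPFR} by applying it to the graph of $f$. Define $\Gamma_f := \{(x, f(x)) : x \in \F_2^m\} \subseteq \F_2^{m+n}$. For any $z \in \F_2^m$, the fiber of $\Gamma_f + \Gamma_f$ above $z$ equals $\{(z, f(z) + e) : e \in E_f\}$, where $E_f := \{f(x) + f(y) - f(x+y) : x, y \in \F_2^m\}$ is the hypothesised error set of size $\leq K$, so $|\Gamma_f + \Gamma_f| \leq K \cdot |\Gamma_f|$. Queries and random samples to $\Gamma_f$ can each be simulated using a single query (resp.\ random coordinate) to $f$, so invoking Theorem~\ref{thm:classicalPFR} on $\Gamma_f$ returns a subspace $V \leq \F_2^{m+n}$ with $|V| \leq |\Gamma_f| = 2^m$ such that $\Gamma_f \subseteq V + T$ for some (implicit) set $T \subseteq \F_2^{m+n}$ with $|T| \leq P_1(K)$.

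I then extract the linear map $g$ from $V$ by linear algebra. Let $\pi_1,\pi_2$ be the coordinate projections onto $\F_2^m$ and $\F_2^n$, and set $V_1 := \pi_1(V)$ and $W := V \cap (\{0\} \times \F_2^n)$. Since $V + T$ projects onto $\F_2^m$, we have $\F_2^m \subseteq V_1 + \pi_1(T)$, which yields $[\F_2^m : V_1] \leq |T| \leq P_1(K)$ and hence also $|W| = |V|/|V_1| \leq P_1(K)$. I compute a linear lift $g_1 : V_1 \to \F_2^n$ of the graph $V/W \subseteq V_1 \times (\F_2^n/W)$, choose a complement $U$ of $V_1$ in $\F_2^m$, and extend to a linear $g : \F_2^m \to \F_2^n$ by declaring $g|_U \equiv 0$; the $n \times m$ matrix of $g$ serves as the concise representation.

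Finally, I verify that $h := f - g$ has small image. For each $x \in \F_2^m$ pick $t = (t_1,t_2) \in T$ with $(x - t_1,\ f(x) - t_2) \in V$; then $x - t_1 \in V_1$ and $f(x) - t_2 = g_1(x - t_1) + w$ for some $w \in W$. Writing $x = v + u$ with $v \in V_1$ and $u \in U$, linearity of $g_1$ gives
\[
h(x) \;=\; g_1(x - t_1) + t_2 + w - g_1(v) \;=\; g_1(u - t_1) + t_2 + w,
\]
so $|\mathrm{Im}(h)| \leq |U| \cdot |T| \cdot |W| \leq P_1(K)^3 =: P_3(K)$, as required. The main obstacle I foresee is shaving the runtime from the naive $\tilde O((m+n)^4)$ (obtained by invoking Theorem~\ref{thm:classicalPFR} on an ambient space of dimension $m+n$) down to the claimed $\tilde O((m+n)^3)$; I expect this can be handled by exploiting the graph structure of $\Gamma_f$, either by replacing the classical PFR invocation with its dequantized query-efficient variant or by observing that the $n^4$ step in algorithmic PFR is an artifact of generic sample handling that becomes cheaper when samples come from a functional graph.
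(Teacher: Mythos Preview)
Your route is genuinely different from the paper's. The paper first shows, via a short Cauchy--Schwarz computation, that the hypothesis $|\Delta f|\le K$ forces
\[
\Pr_{x_1+x_2=x_3+x_4}\big[f(x_1)+f(x_2)=f(x_3)+f(x_4)\big]\ge 1/K,
\]
then applies the algorithmic restricted-homomorphism lemma (Lemma~\ref{lem:findingaffine}) directly with $S=\F_2^m$ to learn $M,v$ with $f(x)=Mx+v$ on a $1/P_4'(K)$ fraction of inputs, and finally uses Ruzsa covering plus the definition of $\Delta f$ to bound $|\{f(x)-Mx\}|\le K^2P_4'(K)$. You instead treat the graph $\Gamma_f$ as a set of doubling $\le K$, invoke Theorem~\ref{thm:classicalPFR} as a black box to get the covering subspace $V$, and read off $g$ from $V$ by linear algebra. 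The combinatorial analysis (bounding $[\F_2^m:V_1]$, $|W|$, and then $|\mathrm{Im}(h)|\le |U|\cdot|T|\cdot|W|\le P_1(K)^3$) is correct and pleasantly clean.

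The real gap is the runtime, and your speculative fixes do not close it. Black-box Theorem~\ref{thm:classicalPFR} on $\Gamma_f\subseteq\F_2^{m+n}$ costs $\tilde O((m+n)^4)$. Opening up the algorithm does not save the missing factor: after localizing $\Gamma_f$ inside its span $U$ (which has dimension $m+O(K)$ by Freiman--Ruzsa), every membership query to $\Gamma_f$ still requires reconstructing both the $\F_2^m$-coordinates (to feed to $f$) and the $\F_2^n$-coordinates (to compare with $f$'s output) of a point given in $U$-coordinates, costing $\Theta(m(m+n))$ per query; with $\tilde O(m^2)$ queries this is $\tilde O(m^3(m+n))$, which is $\Theta((m+n)^4)$ when $m\asymp n$. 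The paper avoids precisely this overhead: since $f$ is already globally defined on $\F_2^m$, the localization and dense-model steps---the source of the extra ambient-dimension factor in algorithmic $\PFR$---are unnecessary, and Lemma~\ref{lem:findingaffine} applies directly at cost $\tilde O((m+n)^3)$. Getting your approach down to $(m+n)^3$ seems to require bypassing the black-box call, at which point you essentially recover the paper's argument.
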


\subsection{Technical overview}
A natural approach for proving an algorithmic $\PFR$ theorem is to try to algorithmize each step in the proof of the $\PFR$ conjecture given in \cite{gowers2023conjecture}, which would in principle provide a result similar to Theorem~\ref{thm:classicalPFR}.
Unfortunately, the aforementioned proof heavily relies on entropic methods that are non-algorithmic by nature and it is unclear whether such machinery can be transformed into efficient algorithms. To overcome this barrier, we take a detour through quantum algorithms.

\paragraph{Stabilizer learning and the Gowers $U^3$-norm of quantum states.} 
In a recent breakthrough, Chen, Gong, Ye, and Zhang provided an efficient quantum procedure to learn the closest stabilizer state to a given quantum state \cite{chen2024stabilizer}.
The connection between this task and additive combinatorics was first noted by Arunachalam and Dutt \cite{ad2024tolerant}, who defined the Gowers $U^3$-norm of an arbitrary $n$-qubit quantum state $\ket{\psi} = \sum_{x \in \FF_2^n} f(x) \ket{x}$ (where $(f(x))_x$ is an $\ell_2$-unit vector) as
\begin{equation}
    \norm{\ket{\psi}}_{U^3} = 2^{n/2} \left[ \Exp_{x,h_1,h_2,h_3 \in \mathbb{F}_2^n} \prod_{\omega \in \mathbb{F}_2^3} C^{|\omega|} f(x + \omega \cdot h) \right]^{1/2^{3}},
\end{equation}
where $C^{|\omega|}f = \overline{f}$ if $\omega:=\sum_{j \in [3]} \omega_j$ is odd and $f$ otherwise. This is proportional to the Gowers $U^3$-~norm of the function $f$ encoded in the amplitudes of the state $\ket{\psi}$, but normalized so that $\norm{\ket{\psi}}_{U^3} \leq \norm{f}_{\ell_2}~=~1$.

Arunachalam and Dutt showed that stabilizer states are the extremizers of the Gowers $U^3$-norm over quantum states, and that a quantum state has non-negligible $U^3$-norm if and only if it correlates with a stabilizer state. Moreover, using the $\PFR$ theorem (Theorem~\ref{thm:marton_conjecture}), they obtained a \emph{polynomial Gowers inverse theorem} for quantum states:
the $U^3$-norm of a quantum state and its maximal correlation with a stabilizer state are polynomially related (see also \cite{bao2025tolerant, mehraban2024improved}). As such, our high-level strategy is to use the stabilizer learning protocol in \cite{chen2024stabilizer} to obtain a quantum algorithmic version of the polynomial Gowers inverse theorem, which is known to also be equivalent to the $\PFR$ theorem due to work of Green and Tao \cite{green2010equivalence} and of Lovett \cite{lovett2012equivalence}.
We then arrive at an algorithmic result, albeit quantum, of a statement that is combinatorially equivalent to $\PFR$.

However, as discussed in the previous section, it is non-trivial to make such combinatorial equivalences algorithmic. In this paper, we algorithmize a proof of equivalence between these two results (inspired by the proofs of Green-Tao and Lovett), thus allowing us to employ the stabilizer learning algorithm \cite{chen2024stabilizer} to obtain efficient \emph{quantum} algorithms for the $\PFR$ theorem and the structural theorems stated in Section~\ref{sec:app}.

\paragraph{Classical algorithms via dequantization.} After obtaining the quantum algorithms above, the last ingredient needed is a method to dequantize these algorithm so as to obtain efficient \emph{classical} algorithms for $\PFR$, as stated in Theorems~\ref{thm:classicalPFR},~\ref{thm:PFR2} and~\ref{thm:PFR3}.
Towards this end, we use in our arguments the machinery developed by Bri\"{e}t and Castro-Silva \cite{briet2025near} to replace the quantum learning algorithm in \cite{chen2024stabilizer} by a classical algorithm that emulates it.
This allows us to obtain analogous algorithmic results in the classical setting, at the expense of quadratically worse query complexity than in the quantum setting. This quantum-to-classical blow-up is inherent, and indeed, we prove it is necessary and essentially optimal (see Theorem~\ref{thm:classicallowerbound}).

\paragraph{Proof outline.}
With the strategy above in mind, we proceed to give a high-level outline of the proofs of Theorems~\ref{thm:classicalPFR} and~\ref{thm:quantumPFR}, which build on the combinatorial arguments of Green and Tao \cite{green2010equivalence} and Lovett \cite{lovett2012equivalence}.
Our other algorithmic results follow via similar methods from these two theorems and the combinatorial arguments of Green and Ruzsa \cite{green2005notes}.

Suppose we have sample and query access to a set $A\subseteq \F_2^n$ such that $|A + A| \leq K|A|$ for a doubling constant $K \geq 1$.
We shall first need to localize $A$ inside the space $\F_2^n$, which in applications can be much larger than $A$ itself (indeed, this is the reason why sample access to $A$ is necessary). We do this by first sampling $O(\log |A|)$ uniformly random elements from $A$ and taking their linear span, which we denote by $U \leq \F_2^n$. While $U$ might not contain all of the original set, using the fact that $A$ has bounded doubling we can show that their intersection $A' := A\cap U$ will likely comprise at least half of the points in $A$
(for a careful choice of parameters).
We prove this in Lemma~\ref{lem:spansample}, which allows us to shift attention from $A$, which can be arbitrarily sparse inside $\F_2^n$, to the localization $A'$, which occupies a positive fraction (at least a $2^{-2K}$-fraction) of the vector space $U$, by the Freiman-Ruzsa theorem.
Note that $A'$ will also have small doubling constant:
$$|A'+A'| \leq |A+A| \leq K|A| \leq 2K|A'|.$$

We next wish to obtain a ``dense model'' of the localized set $A'$; that is, a set $S \subseteq \F_2^m$ that is ``additively equivalent'' to $A'$, as captured by the notion of Freiman isomorphisms, but which has density at least $1/K^C$ (for a universal constant $C>1$) inside its ambient space $\F_2^m$.
We show how to do this in Lemma~\ref{lem:randomFreiman}, which states that with high probability a uniformly random linear map $\pi: U \to \F_2^m$, for $m = \log |4A'| + 10$, will be a Freiman isomorphism (i.e., isomorphism of additive quadruples) from $A'$ to $S := \pi(A')$.
Informally, this means they have the same additive structure, and hence such a dense model can be efficiently obtained by sampling.

Equipped with the localized dense model, we proceed to learn the covering subspace. Denote by $f: S \to A'$ the inverse of $\pi$ when restricted to $S$.
By the definition of Freiman isomorphisms, we have that
$$\forall a, b, c, d \in S:\: a+b = c+d \implies f(a)+f(b) = f(c)+f(d).$$
From this approximate linearity condition of $f$ on $S$, we can show that the function
$$g(x, y) = \one_S(x) (-1)^{f(x)\cdot y}$$
is approximately quadratic.

In particular, following the approach of Green~\cite{green2005notes} and Green-Tao~\cite{green2010equivalence}, we show a slight strengthening of the homomorphism testing formulation of the $\PFR$ theorem (see Lemma~\ref{lem:closeaffine}), which we then proceed to algorithmize (see Lemma~\ref{lem:findingaffine}), relying on tools such as the quadratic Goldreich-Levin theorem (see Theorem~\ref{thm:quadratic_GL}). In more detail, we first prove that there exists a quadratic function $q: \F_2^{m+n} \to \F_2$ such~that
\begin{equation} \label{eq:quadcorr}
    \Big|\Exp_{x\in \F_2^m,\, y\in \F_2^n} \one_S(x) (-1)^{f(x)\cdot y} (-1)^{q(x,y)}\Big| \geq \frac{1}{P(K)} \;,
\end{equation}
for some polynomial $P\colon \R_+ \to \R_+$.

Crucially, we can efficiently \emph{learn} such a high-correlation quadratic function $q$.
This is done relying on the stabilizer learning algorithm of Chen et al \cite{chen2024stabilizer} in the quantum setting, or its dequantization by Bri\"{e}t and Castro-Silva \cite{briet2025near} in the classical setting.
In order to use those theorems, however, we need to be able to efficiently query the function $g(x,y)$.
This requires making queries to the set $S = \pi(A')$, and inverting the linear map $\pi$ restricted to $A'$.
We show how this can be done using a $O(n^3)$ time pre-processing step,
and an extra cost of $O(n^2)$ time
for each query to $g$.
The total time and query complexities of our algorithms follow from this step.

From Eq.~\eqref{eq:quadcorr} and algebraic manipulations, we conclude that the homogeneous bilinear~form
$$B(x, y) = q(x,y) - q(x,0) - q(0,y) + q(0,0)$$
correlates well with $g(x,y)$.
Since we obtained an explicit description of $q$, we can compute a matrix $M\in \F_2^{n\times m}$ such that $B(x,y) = y^T Mx$.
By a simple Fourier analytic argument,
we can then conclude there is some $v\in \F_2^n$ such that
$$f(x) = Mx + v \quad \text{for at least $2^m/P'(K)$ values $x\in S$,}$$
where $P'\colon \R_+ \to \R_+$ is another polynomial we obtain in our proof.
This implies that the subspace
$$V = \big\{Mx:\: x\in \F_2^m\big\} \leq \F_2^n$$
satisfies
$$\big|A'\cap (v+V)\big| = \big|\mathrm{Im}(f)\cap (v+V)\big| \geq 2^m/P'(K) \geq |A|/P'(K).$$
By an application of Ruzsa's covering lemma, we conclude that $P'(K)$ translates of $V$ can cover~$A$.
By our choice of $m$ and the fact that $A'$ has bounded doubling, we deduce that $V$ is covered by $2^m/|A| \leq 2^{12} K^4$ translates of any of its subspaces having size $|A|$, which concludes the high-level overview of the proof.

\paragraph{Future directions.} As discussed above, a crucial component of the proof of our \emph{classical} algorithmic $\PFR$ theorem relies on dequantizing a \emph{quantum} algorithm for learning the closest stabilizer state to a given arbitrary quantum state. Finding further connections between quantum algorithms and additive combinatorics is an interesting direction for future research. One concrete question that is left open from our work is to improve the dependence on the doubling constant $K$. As is standard in additive combinatorics, $K$ is assumed to be a constant, but for asymptotically growing $K$ it is an interesting open problem whether there exists an algorithm with query and time complexities that scale polynomially in $K$.

\section{Preliminaries}

In this section, we state results that we will use in our algorithmic $\PFR$ theorems. We begin with standard definition in additive combinatorics.

\subsection{Additive combinatorics}

For a set $S\subseteq \FF_2^n$ and $k\geq 1$, define the $k$-fold sumset as $kS=\{\sum_{i\in T}a_i:a_i\in S\}_{|T|=k}$. In particular, $S+S = \{a_1+a_2:a_1,a_2\in S\}$.
We define the \emph{doubling constant} of $S$ as the smallest integer $K$ such that $|2S|\leq K|S|$.
We denote the linear span of $S$ by $\vspan(S)$. An additive quadruple in a set $S$ is $(x_1, x_2, x_3, x_4)\in S^4$ such that $x_1+x_2 = x_3+x_4$.

\begin{definition}
   The \emph{additive energy} of a set $S$ is the number of additive quadruples in $S$:
$$E(S) := \big|\big\{(x_1, x_2, x_3, x_4)\in S^4:\: x_1+x_2 = x_3+x_4\big\}\big|.$$ 
Note that $E(S) \leq |S|^3$.
\end{definition}

\begin{definition}[Freiman homomorphism]
For a set $S\subseteq \F_2^n$, a function $\phi: S\rightarrow \F_2^m$ is a \emph{Freiman homomorphism} if, for every additive quadruple $x_1, x_2, x_3, x_4\in S$ such that $x_1+x_2=x_3+x_4$, we have that $\phi(x_1)+\phi(x_2)=\phi(x_3)+\phi(x_4)$.
\end{definition}

\begin{definition}[Freiman isomorphism]
   A \emph{Freiman isomorphism} is a bijective Freiman homomorphism $\phi$ such that its inverse is also a Freiman homomorphism;
this is equivalent to requiring~that
$$
\forall a, b, c, d\in S:\: a+b = c+d \iff \phi(a)+\phi(b) = \phi(c)+\phi(d).
$$ 
\end{definition}
We shall need the following well-known theorems and lemmas in additive combinatorics.

\begin{theorem}[Freiman-Ruzsa theorem \cite{ruzsa1999analog, even2012sums}]
\label{thm:spanAsize}
    Let $A\subseteq \FF_2^n$.
    If $|A+A|\leq K\cdot |A|$, then $|\vspan(A)| \leq  2^{2K}/(2K) \cdot|A|$.
\end{theorem}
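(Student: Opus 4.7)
The plan is to first reduce to $0 \in A$ by translation (which preserves all three quantities $|A|$, $|A+A|$, and $|\vspan(A)|$), apply the Plünnecke--Ruzsa iterated sumset inequality, and then refine the resulting bound via a compression argument. With $0 \in A$, the sumsets form an ascending chain $A \subseteq 2A \subseteq 3A \subseteq \cdots \subseteq \vspan(A)$; choosing any basis $v_1, \ldots, v_d \in A$ of $\vspan(A)$, every element of $\vspan(A)$ is a subset sum of the $v_i$ and hence lies in $dA$, so in fact $\vspan(A) = dA$ where $d = \dim \vspan(A)$.

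Next I would invoke the Plünnecke--Ruzsa inequality $|kA| \leq K^k |A|$, yielding the preliminary bound $|\vspan(A)| = |dA| \leq K^d|A|$. This already shows $\vspan(A)$ is not too large, but depends on $d$ rather than $K$ alone, so it does not match the target constant $2^{2K}/(2K)$. To close this gap, the plan is to use a compression/iteration argument in the spirit of Ruzsa and Even-Zohar: iteratively find an $\FF_2$-linear functional along which $A$ has an unbalanced fibre, restrict $A$ to the denser fibre, and recurse; the doubling hypothesis forces termination after at most $\approx 2K - \log_2(2K)$ steps. Aggregating the density losses across all iterations gives $\dim \vspan(A) \leq \log_2 |A| + 2K - \log_2(2K)$, which is equivalent to the stated estimate.

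The main obstacle is the quantitative bookkeeping in the compression argument: one must ensure that at every stage either the dimension strictly decreases (with a controlled factor of density loss) or the doubling hypothesis forces immediate termination, and that the resulting product of density losses aggregates to precisely $2^{2K}/(2K)$ rather than a weaker constant such as $K^{O(K)}$. The extremal examples are radius-$1$ Hamming balls in $\FF_2^d$, which saturate the inequality and guide the correct choice of compressing functional at each step.
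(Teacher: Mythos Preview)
The paper does not include a proof of this theorem; it is quoted as a known result with citations to Ruzsa and to Even-Zohar, and is used only as a black box elsewhere (in Theorem~\ref{thm:algoPFR1} and Theorem~\ref{thm:algoquantumPFR1}). So there is no ``paper's own proof'' to compare against.

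Your plan is essentially the standard route to this bound: reduce to $0\in A$, use Pl\"unnecke--Ruzsa to control iterated sumsets, and then run the Even-Zohar compression argument to extract the sharp constant $2^{2K}/(2K)$. One small inaccuracy worth flagging: translation by an element $a_0\in A$ does \emph{not} in general preserve $|\vspan(A)|$. For instance, with $A=\{e_1,e_2\}\subseteq\FF_2^2$ one has $\vspan(A)=\FF_2^2$, but $\vspan(A+e_1)=\{0,e_1+e_2\}$ has half the size. In general $\dim\vspan(A)\le\dim\vspan(A+a_0)+1$, so the reduction costs at most a factor of $2$; alternatively, the compression argument can be run directly on $A$ without first translating. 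This is a minor patch, and otherwise your outline matches the argument in the cited references.
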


\begin{theorem}[Balog-Szemer\'{e}di-Gowers theorem]
\label{thm:BSG}
    Let $A \subseteq \F_2^n$ be a set such that
    $$E(A) = \big|\big\{(x_1, x_2, x_3, x_4)\in A^4:\: x_1+x_2 = x_3+x_4\big\}\big| \geq |A|^3/K.$$
    There there is a set $A' \subseteq A$ such that
    $$|A'| \geq |A|/P_{BSG}^{(1)}(K) \quad \text{and} \quad |A'+A'| \leq P_{BSG}^{(2)}(K) |A|.$$
\end{theorem}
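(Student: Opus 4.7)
The plan is to follow the classical graph-theoretic proof of BSG due to Balog--Szemer\'{e}di and Gowers: I would extract a dense ``popular sum'' graph from the energy hypothesis, pass to a pseudo-transitive subset via the BSG graph lemma, and then count representations to bound the sumset. For the first phase, I would introduce $r(s) := |\{(a,b)\in A\times A : a+b = s\}|$, noting that $\sum_s r(s) = |A|^2$ while $\sum_s r(s)^2 = E(A) \geq |A|^3/K$. A dyadic pigeonhole on the distribution of $r(\cdot)$ then isolates a threshold $\tau = \Theta(|A|/K)$ such that the set of popular sums $P := \{s : r(s) \geq \tau\}$ accounts for at least $|A|^2/\poly(K)$ of the ordered pairs in $A\times A$.

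For the second phase, I would build the graph $G$ on vertex set $A$ whose edges are the pairs $\{a,b\}$ with $a+b \in P$; by phase one this graph has edge density $\geq 1/\poly(K)$. The key nonelementary input is then the BSG graph lemma, proved via two rounds of Cauchy--Schwarz (equivalently, dependent random choice): there exists $A' \subseteq A$ with $|A'| \geq |A|/\poly(K) =: |A|/P_{BSG}^{(1)}(K)$ such that for every pair $a, a' \in A'$, the number of length-three paths $a \sim x \sim y \sim a'$ in $G$ is at least $|A|^2/\poly(K)$. For the third phase, I would observe that each such path certifies $a+x,\, x+y,\, y+a' \in P$, so each of these three sums admits $\geq \tau$ representations as $u+v$ with $u, v \in A$. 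Since $a+a' = (a+x) + (x+y) + (y+a')$ in $\F_2^n$, combining a path with a choice of representations writes $a+a'$ as a sum of six elements of $A$, and since the choice data determines the path uniquely (each path vertex is recoverable from the corresponding $u_i$'s and the endpoints), no overcounting occurs. Hence every $s \in A'+A'$ admits at least $N := (|A|^2/\poly(K))\cdot \tau^3 = |A|^5/\poly(K)$ representations in $6A$; summing over $s \in A'+A'$ and using the trivial identity $\sum_s r_{6A}(s) = |A|^6$ gives $|A'+A'|\cdot N \leq |A|^6$, hence $|A'+A'| \leq \poly(K)\cdot |A| =: P_{BSG}^{(2)}(K)\,|A|$.

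The main obstacle will be the graph lemma in phase two: obtaining polynomial (rather than tower-type or exponential) dependence on $K$ requires Gowers's careful Cauchy--Schwarz / dependent-random-choice argument, not a naive regularity-based one. The rest of the proof is bookkeeping, with the only real subtlety being to track the polynomial factors through phases two and three so that explicit polynomials $P_{BSG}^{(1)}(K)$ and $P_{BSG}^{(2)}(K)$ emerge cleanly.
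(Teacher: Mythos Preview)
The paper does not prove Theorem~\ref{thm:BSG} at all: it is stated in the Preliminaries section as one of several ``well-known theorems and lemmas in additive combinatorics'' that are quoted without proof and used as black boxes later on (specifically inside the proof of Lemma~\ref{lem:closeaffine}). So there is no ``paper's own proof'' to compare your proposal against.

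That said, your sketch is the standard graph-theoretic proof of BSG and is correct in outline. The three-phase structure (popular sums via dyadic pigeonhole, dependent random choice / path-counting graph lemma, then counting representations in $6A$ to bound $|A'+A'|$) is exactly the Gowers argument, and your identification of the graph lemma as the only nontrivial step is accurate. One minor quibble: the uniqueness claim ``the choice data determines the path uniquely'' is slightly loose as stated --- you recover $x$ from $a$ and the representation of $a+x$, then $y$ from $x$ and the representation of $x+y$, so the map from (path, three representations) to a $6$-tuple in $A^6$ is injective, which is what you need. If you were to write this up in full you would also want to be careful that the dyadic pigeonhole in phase one actually gives density $\geq 1/\poly(K)$ rather than $1/(\poly(K)\log|A|)$; this is standard but worth a line.
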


\begin{lemma}[Pl\"unnecke's inequality~\cite{tao2006additive}]
\label{lem:4Asize}
    If $A \subseteq \FF_2^{n}$ satisfies $|2A|/|A| \leq K$, then $|4A|/|A| \leq K^4$.
\end{lemma}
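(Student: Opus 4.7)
The plan is to derive this bound from the standard Plünnecke-Ruzsa inequality, which I would prove via Petridis' short argument. Since we work in $\F_2^n$, addition coincides with subtraction, so $k$-fold sumsets and iterated difference sets agree; this lets us work exclusively with sumsets. The proof reduces to two steps: proving Petridis' lemma, and iterating it.

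First, I would prove \emph{Petridis' lemma}. Choose a nonempty $X \subseteq A$ minimizing $\kappa := |X+A|/|X|$; by testing $X = A$ one gets $\kappa \leq K$, and by the minimality $|S + A| \geq \kappa |S|$ for every nonempty $S \subseteq A$. I claim that for every finite nonempty set $C \subseteq \F_2^n$,
\[
|X + A + C| \leq \kappa \cdot |X + C|,
\]
by induction on $|C|$. The base case $|C|=1$ is trivial by translation invariance. For the inductive step, write $C = C' \cup \{c\}$ with $c \notin C'$ and set $Z := \{x \in X : x + c \in X + C'\}$. A direct check gives (i) the set identity $(X+c) \setminus (X+C') = (X \setminus Z) + c$, so that $|X+C| = |X+C'| + |X \setminus Z|$; and (ii) the inclusion $(X+A+c) \cap (X+A+C') \supseteq Z + A$, obtained by writing $z+c = x'+c'$ for $z \in Z$, $x' \in X$, $c' \in C'$. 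Combining (ii) with $|X+A| = \kappa|X|$ and the minimality bound $|Z+A| \geq \kappa|Z|$ yields
\[
|(X+A+c) \setminus (X+A+C')| \leq |X+A| - |Z+A| \leq \kappa\,|X \setminus Z|.
\]
The inductive hypothesis applied to $C'$ then closes the induction, giving $|X+A+C| \leq \kappa(|X+C'| + |X \setminus Z|) = \kappa|X+C|$.

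Iterating Petridis' lemma with $C = A, 2A, 3A$ yields $|X + 4A| \leq \kappa^4 |X|$. Fixing any $x_0 \in X$, translation invariance gives
\[
|4A| = |x_0 + 4A| \leq |X + 4A| \leq \kappa^4 |X| \leq K^4 |A|,
\]
where the last inequality uses $|X| \leq |A|$ and $\kappa \leq K$. The main technical step is the inductive claim in Petridis' lemma, especially getting the definition of $Z$ right so that both set-theoretic identities (i) and (ii) hold simultaneously; everything else is light bookkeeping. The argument appears in essentially this form in \cite{tao2006additive}.
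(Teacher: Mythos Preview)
Your proof is correct and complete; the paper itself does not prove this lemma at all, merely stating it as a preliminary fact cited from \cite{tao2006additive}. So you have supplied strictly more than the paper does, via the now-standard Petridis route.

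One small slip worth fixing: in step (ii) of the inductive argument, the inclusion should read $(X+A+c) \cap (X+A+C') \supseteq Z + A + c$, not $Z + A$. Indeed, from $z+c = x'+c'$ and any $a \in A$ one gets $z+a+c = x'+a+c' \in X+A+C'$, while $z+a+c \in X+A+c$ is immediate since $z \in X$. The element $z+a$ itself need not lie in $X+A+c$. This does not affect the cardinality bound you actually use, since $|Z+A+c| = |Z+A|$ by translation invariance, but the set-theoretic statement as written is not quite right.
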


\begin{lemma}[Ruzsa's covering lemma]
\label{lem:ruzsacovering}
    If $S, T \subseteq \FF_2^n$ satisfy $|T+S| \leq K|S|$, then there is a subset $X \subseteq T$ of size $|X| \leq K$ such that $T \subseteq X + 2S$.
\end{lemma}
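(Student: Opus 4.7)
The plan is to use a standard greedy/maximality argument: choose $X \subseteq T$ to be a \emph{maximal} subset with the property that the translates $\{x + S : x \in X\}$ are pairwise disjoint. Such an $X$ exists by a simple inductive/Zorn-type construction (or just a greedy algorithm, since $T$ is finite). I will then extract the size bound from this disjointness and the covering property from maximality.

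For the size bound, note that when the translates $\{x + S\}_{x \in X}$ are pairwise disjoint, their union inside $T + S$ has cardinality exactly $|X| \cdot |S|$. Combined with the hypothesis $|T+S| \leq K|S|$, this gives
\[
|X| \cdot |S| \;=\; \Big|\bigsqcup_{x \in X} (x+S)\Big| \;\leq\; |T+S| \;\leq\; K|S|,
\]
and so $|X| \leq K$ (assuming $S \neq \emptyset$; the empty case is vacuous).

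For the covering claim, fix any $t \in T$. If $t \in X$, then for any $s \in S$ we have $t = t + s + s \in X + 2S$ (using $s+s = 0$ in $\F_2^n$), so $t \in X + 2S$. Otherwise $t \notin X$, and by maximality of $X$ the enlarged family $\{x + S : x \in X \cup \{t\}\}$ fails to be pairwise disjoint; since the original family was disjoint, this forces $(t + S) \cap (x + S) \neq \emptyset$ for some $x \in X$. Thus there exist $s_1, s_2 \in S$ with $t + s_1 = x + s_2$, which in $\F_2^n$ rearranges to $t = x + s_1 + s_2 \in X + 2S$. In either case $t \in X + 2S$, proving $T \subseteq X + 2S$.

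There is no real obstacle here; the only subtlety to watch is the characteristic-$2$ arithmetic ($-s = s$, so $S - S = S + S = 2S$), which makes the statement match the paper's convention $2S = S+S$ without sign issues, and the trivial case $S = \emptyset$, which reduces the claim to $T = \emptyset$ and is handled by taking $X = \emptyset$.
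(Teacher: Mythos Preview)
Your proof is correct and is the standard greedy/maximality argument for Ruzsa's covering lemma. The paper itself does not supply a proof of this lemma --- it is stated as a preliminary fact --- so there is no alternative approach to compare against; your argument is exactly the one that would be expected here.
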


\subsection{Quantum information}

Our quantum algorithms revolve around stabilizer states; we will use the following characterization of stabilizer states.
\begin{theorem}[\cite{nest2008classical}]
\label{thm:neststabilizer}
    Every $k$ qubit stabilizer state can be written in the following form
    $$
    \frac{1}{\sqrt{|A|}}\sum_{x\in A}i^{\ell(x)}(-1)^{q(x)}\ket{x},
    $$
    for some affine subspace $A\subseteq \F_2^k$, quadratic polynomial $q$ and linear polynomial $\ell$ in the variables $(x_1,\ldots,x_k)\in \F_2^k$. 
\end{theorem}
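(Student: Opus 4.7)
The plan is to derive the explicit form directly from the stabilizer group structure. Every stabilizer state $|\psi\rangle$ on $k$ qubits is, up to global phase, the unique joint $+1$-eigenstate of a maximal abelian subgroup $\calS$ of the Pauli group with $-\Id \notin \calS$. Writing a general Pauli as $\omega X^a Z^b$ with $\omega \in \{\pm 1, \pm i\}$ and $a, b \in \F_2^k$, the key tool will be the Pauli multiplication identity $X^a Z^b \cdot X^{a'} Z^{b'} = (-1)^{b \cdot a'} X^{a + a'} Z^{b + b'}$.

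First I would put $\calS$ in normal form by Gaussian elimination over $\F_2$: choose generators $g_1, \ldots, g_k$ of $\calS$ so that $g_i = \omega_i X^{a_i} Z^{b_i}$ for $i \leq r$ with $a_1, \ldots, a_r$ linearly independent, while $g_{r+j} = (-1)^{\alpha_j} Z^{c_j}$ for $j \leq k - r$ with $c_1, \ldots, c_{k-r}$ linearly independent. Commutativity of $\calS$ forces $c_j \cdot a_i = 0$ for all $i, j$, so the pure-$Z$ generators constrain $|\psi\rangle$ to be supported on the affine subspace
$$A := \{x \in \F_2^k : c_j \cdot x = \alpha_j \text{ for all } j \leq k - r\},$$
which has size $|A| = 2^r$ and can be written as $A = x_0 + V_X$ where $V_X = \vspan(a_1, \ldots, a_r)$ and $x_0 \in A$ is any fixed base point.

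Next I would compute the amplitudes. Since $|\psi\rangle$ is the unique (up to global phase) $+1$-eigenstate of $\calS$, for any fixed $x_0 \in A$ it is proportional to the projection $\frac{1}{2^r} \sum_{t \in \F_2^r} g_1^{t_1} \cdots g_r^{t_r} |x_0\rangle$. Iterating the Pauli multiplication rule gives
$$g_1^{t_1} \cdots g_r^{t_r} |x_0\rangle = \Big(\prod_i \omega_i^{t_i}\Big) (-1)^{\sum_{i<j} t_i t_j (b_i \cdot a_j) + (\sum_i t_i b_i) \cdot x_0} \Big|x_0 + \sum_i t_i a_i\Big\rangle,$$
and the basis state on the right ranges over $A$ bijectively as $t$ varies over $\F_2^r$. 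Writing $\omega_i = i^{e_i}$ with $e_i \in \{0, 1, 2, 3\}$, the prefactor $i^{\sum_i e_i t_i}$ (with the exponent read as an integer sum) splits as $i^{L(t)} (-1)^{Q_0(t)}$ for a $\ZZ_4$-valued linear $L$ and an $\F_2$-valued quadratic $Q_0$. Combining this with the other $(-1)$-factors yields an amplitude of the form $i^{L(t)}(-1)^{Q(t)}$ with $L$ linear and $Q$ quadratic in $t$. Pulling back through the affine bijection $t \mapsto x_0 + \sum_i t_i a_i$ and extending $L, Q$ arbitrarily from $A$ to $\F_2^k$ then yields the stated form.

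The main obstacle is verifying that the amplitude is genuinely of the form $i^{\text{linear}}(-1)^{\text{quadratic}}$, which requires careful tracking of phases modulo $4$. The factors $\omega_i^{t_i}$ combine via $i^{m_1 + m_2} = i^{m_1} i^{m_2}$ and must then be reduced using $i^{2m} = (-1)^m$; one has to check that the resulting $(-1)$-exponent assembles into a genuine quadratic form in $t$ and not a polynomial of higher degree. Combined with the explicit quadratic term $\sum_{i<j} t_i t_j (b_i \cdot a_j)$ from the Pauli commutation rule and the bilinear coupling to $x_0$, one obtains the desired quadratic polynomial. This phase bookkeeping is the only delicate step; the rest is routine linear algebra over $\F_2$.
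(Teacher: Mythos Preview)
The paper does not prove this theorem; it is stated in the preliminaries and attributed to van den Nest without argument, so there is no in-paper proof to compare against.

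Your approach is the standard one and is essentially correct: reduce the stabilizer group to a normal form with $r$ generators having independent $X$-parts and $k-r$ pure-$Z$ generators, read off the affine support from the $Z$-part, and compute amplitudes by projecting a basis vector in the support. One small imprecision: you describe $L$ as ``$\ZZ_4$-valued linear'', but to match the theorem's form (where $\ell$ is an $\F_2$-valued linear polynomial so that $i^{\ell(x)}\in\{1,i\}$) you need $L$ to be $\F_2$-valued. This is achievable: writing $e_i = 2f_i + d_i$ with $f_i, d_i \in \{0,1\}$ and using the carry identity $\sum_i a_i \equiv \sigma_1(a) + 2\sigma_2(a) \pmod 4$ for bits $a_i$, one gets
\[
\sum_i e_i t_i \;\equiv\; \underbrace{\sum_i d_i t_i}_{L(t)} \;+\; 2\Big(\underbrace{\sum_i f_i t_i + \sum_{i<j} d_i d_j t_i t_j}_{Q_0(t)}\Big) \pmod 4,
\]
so $i^{\sum_i e_i t_i} = i^{L(t)}(-1)^{Q_0(t)}$ with $L$ genuinely $\F_2$-linear and $Q_0$ quadratic. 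With this refinement your phase bookkeeping goes through, and the pullback along the affine bijection $t\mapsto x_0+\sum_i t_i a_i$ (extended to all of $\F_2^k$ by completing the $a_i$ to a basis) preserves the degrees as you claim.
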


Our quantum algorithmic $\PFR$ theorems will crucially use the agnostic learnability of stabilizer states. Informally the task here is as follows: supposing an arbitrary quantum state $\ket{\psi}$ was $\tau$-close to an \emph{unknown} stabilizer state $\ket{\phi}$, output the ``nearest" stabilizer state $\ket{\phi'}$ that is $\tau-\varepsilon$ close. A recent work of Chen, Gong, Ye, and Zhang~\cite{chen2024stabilizer}, gave an agnostic learning algorithm that runs in time quasipolynomial in $1/\tau$ and polynomial in the other parameters. Formally, their result is stated in the following theorem.

\begin{theorem}[\cite{chen2024stabilizer}]
\label{thm:sitanbootstrapping}
Let $\mathcal{C}$ be  the class of stabilizer states. Fix any $\varepsilon\leq \tau \in (0,1)$. 
There is an algorithm that, given access to copies of $\rho$ with $\max_{|\phi'\rangle \in \mathcal{C}} |\langle \phi' | \rho | \phi' \rangle| \geq \tau$, outputs a $|\phi\rangle \in \C$ such that $|\langle \phi | \rho | \phi \rangle| \geq \tau - \varepsilon$ with high probability.
The algorithm performs single-copy and two-copy measurements on at most $n\cdot \poly(1/\varepsilon,(1/\tau)^{\log 1/\tau})$ copies of $\rho$ and runs in time $n^3 \poly(1/\varepsilon, (1/\tau)^{\log 1/\tau})$.
\end{theorem}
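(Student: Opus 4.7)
The plan is to extract, through a combination of one- and two-copy measurements, enough information about the ``Pauli spectrum'' of $\rho$ to identify a candidate stabilizer group and, from it, a high-fidelity stabilizer state. The key identity is that any stabilizer state $|\phi\rangle$ has density matrix $|\phi\rangle\langle\phi| = 2^{-n}\sum_{g\in \mathcal{S}}g$ for some maximal abelian signed Pauli subgroup $\mathcal{S}$ of size $2^n$, so the fidelity bound $\langle\phi|\rho|\phi\rangle\geq \tau$ forces $\sum_{g\in\mathcal{S}} \tr(g\rho)\geq 2^n\tau$, and by Cauchy--Schwarz $\sum_{g\in\mathcal{S}}|\tr(g\rho)|^2 \geq 2^n\tau^2$. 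Equivalently, \emph{Bell sampling}, which measures $\rho\otimes\rho$ in the Bell basis and outputs a Pauli $P$ with probability proportional to $|\tr(P\rho)|^2$, concentrates a $\tau^2$-fraction of its mass on elements of the unknown stabilizer group of the near-optimal $|\phi\rangle$.

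Given this, I would first draw a batch of Bell samples and use Gaussian elimination over $\mathbb{F}_2$ on the resulting symplectic labels to extract a large, mutually commuting family of Paulis from the most frequent samples (a step that costs $O(n^3)$ time and underlies the time complexity factor). Single-copy measurements of each chosen Pauli then fix its sign, yielding a candidate stabilizer state $|\phi'\rangle$ whose fidelity with $\rho$ can be estimated via another round of single-copy measurements on the generators. When the promise $\tau$ is $\Omega(1)$, this one-shot scheme already succeeds with high probability, since the heavy Bell samples essentially have to come from the true $\mathcal{S}$. The difficulty is that when $\tau$ is small the noise floor from Paulis outside $\mathcal{S}$ can obscure the signal, and there may be several near-optimal competing stabilizer groups contributing to the same Bell distribution, so a single greedy extraction does not suffice.

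To handle the small-$\tau$ regime, the natural strategy is a recursive bootstrapping. Starting from a partial candidate isotropic subspace, apply a Clifford $U$ that diagonalizes it, so that $U^\dagger \rho U$ is close to a product of a known computational-basis vector on some qubits and a residual state on the remaining qubits. One then recurses on the residual state to peel off one stabilizer generator at a time. Each level enlarges a list of candidate substabilizers by a factor of $\poly(1/\tau)$, and after $O(\log(1/\tau))$ levels the enumeration reaches $(1/\tau)^{O(\log 1/\tau)}$ leaves; picking the leaf of largest empirical fidelity with $\rho$ yields the desired output and explains the quasipolynomial factors in the sample and time bounds. The main obstacle, and the technical heart of the argument, is proving a robust quantitative inverse statement showing that the residual $\tau$-fidelity guarantee survives one level of the recursion up to polynomial loss in $\tau$; this is analogous to a quantum Gowers-$U^3$ inverse theorem and is what ultimately controls the bootstrapping depth and hence the final complexity.
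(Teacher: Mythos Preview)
This theorem is not proved in the paper at all: it appears in the preliminaries section as a black-box citation of Chen, Gong, Ye, and Zhang~\cite{chen2024stabilizer}, and is invoked without proof as a subroutine in the proof of Lemma~\ref{lem:findingfreimanquantum}. There is therefore no ``paper's own proof'' to compare your proposal against.

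For what it is worth, your sketch does track the high-level architecture of the cited algorithm---Bell sampling to access the squared Pauli weights, Gaussian elimination over $\mathbb{F}_2$ to extract a commuting family (whence the $n^3$ time factor), and a recursive bootstrapping to depth $O(\log(1/\tau))$ that produces the $(1/\tau)^{O(\log 1/\tau)}$ dependence. You are also right that the crux is a robust inverse-type statement guaranteeing that the fidelity promise survives one level of recursion with only polynomial loss in $\tau$; but establishing that step is essentially the main content of~\cite{chen2024stabilizer}, and your outline does not supply it. So what you have is an accurate plan, not a proof, of a result that the present paper simply imports.
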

We will also require the following subroutines for estimating the overlap between two states and obtaining unitaries that prepare stabilizer states
\begin{lemma}[\textsf{SWAP} test]
\label{lem:swap_test}
Let $\varepsilon,\delta \in (0,1)$. Given two arbitrary $n$-qubit quantum states $\ket{\psi}$ and~$\ket{\phi}$, there is a quantum algorithm that estimates $|\langle \psi | \phi \rangle|^2$ up to error $\varepsilon$ with probability at least $1-\delta$ using $O(1/\varepsilon^2\cdot \log(1/\delta))$ copies of $\ket{\psi},\ket{\phi}$ and runs in $O(n/\varepsilon^2\cdot \log(1/\delta))$ time.
\end{lemma}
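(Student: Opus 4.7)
The plan is to use the standard SWAP test circuit and then apply a Chernoff--Hoeffding concentration bound to amplify success probability. First, I would construct the following one-shot procedure: take one copy of $\ket{\psi}$ on register $R_1$ and one copy of $\ket{\phi}$ on register $R_2$, together with an ancilla qubit initialized in $\ket{0}$. Apply a Hadamard gate to the ancilla, then a controlled-SWAP between $R_1$ and $R_2$ controlled on the ancilla, and finally a second Hadamard to the ancilla; then measure the ancilla in the computational basis.

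Next I would compute the outcome distribution. A direct calculation shows the state after the second Hadamard is
\begin{equation*}
    \tfrac{1}{2}\ket{0}\bigl(\ket{\psi}\ket{\phi}+\ket{\phi}\ket{\psi}\bigr)+\tfrac{1}{2}\ket{1}\bigl(\ket{\psi}\ket{\phi}-\ket{\phi}\ket{\psi}\bigr),
\end{equation*}
so the probability of measuring $0$ on the ancilla equals $p:=\tfrac{1}{2}+\tfrac{1}{2}|\braketIP{\psi}{\phi}|^2$. In particular, if $\hat p$ is an estimate of $p$ within additive error $\varepsilon/2$, then $\hat q:=2\hat p-1$ estimates $|\braketIP{\psi}{\phi}|^2$ within additive error~$\varepsilon$.

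To estimate $p$, I would run the above procedure $T=O(\varepsilon^{-2}\log(1/\delta))$ times on fresh copies of $\ket{\psi}$ and $\ket{\phi}$, each time recording the indicator $X_i\in\{0,1\}$ of the event that the ancilla was measured to be $0$, and output $\hat p=\tfrac{1}{T}\sum_i X_i$. Since the $X_i$ are independent Bernoulli$(p)$, Hoeffding's inequality gives $\Prob[|\hat p-p|>\varepsilon/2]\leq 2\exp(-T\varepsilon^2/2)\leq \delta$ for a suitable constant in $T$, yielding the desired sample complexity of $O(\varepsilon^{-2}\log(1/\delta))$ copies of each state.

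Finally, I would account for the time complexity. Each single run uses $2$ Hadamards, one $n$-qubit controlled-SWAP (which decomposes into $n$ Toffoli-style controlled-SWAPs on pairs of qubits, each requiring $O(1)$ two-qubit gates), and one single-qubit measurement, for a total of $O(n)$ gates per run; the state-preparation cost is assumed (copies are given). Multiplying by $T$ runs yields overall runtime $O(n\varepsilon^{-2}\log(1/\delta))$. The only step with any subtlety is the concentration argument and the factor-of-$2$ rescaling from $p$ to $|\braketIP{\psi}{\phi}|^2$; everything else is a direct circuit analysis.
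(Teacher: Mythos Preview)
Your argument is correct and is exactly the standard proof of the SWAP test; the paper itself states this lemma as a known primitive without proof, so there is nothing to compare against. Your circuit analysis, the identity $p=\tfrac12+\tfrac12|\braketIP{\psi}{\phi}|^2$, the Hoeffding bound, and the $O(n)$ gate count per run are all fine.
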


\begin{lemma}[Clifford synthesis~\cite{dehaene2003clifford,patel2003efficient}]\label{lem:clifford_synthesis}
Given the classical description of an $n$-qubit stabilizer state $\ket{\phi}$, there is a quantum algorithm that outputs a Clifford circuit $C$ that prepares $\ket{\psi}$, using $O(n^2/\log n)$ many single-qubit and two-qubit Clifford gates.
\end{lemma}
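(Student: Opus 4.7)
The plan is to invoke Theorem~\ref{thm:neststabilizer} to place $|\phi\rangle$ in the canonical form
\[
|\phi\rangle = \frac{1}{\sqrt{|A|}}\sum_{x\in A} i^{\ell(x)}(-1)^{q(x)}|x\rangle,
\]
for an affine subspace $A \subseteq \F_2^n$ together with a quadratic polynomial $q$ and a linear polynomial $\ell$. The first step is to extract the triple $(A, q, \ell)$ from the classical description of $|\phi\rangle$ (e.g., a stabilizer tableau, or a list of amplitudes on a small support); this reduces to Gaussian elimination over $\F_2$ and finishes in $O(n^3)$ classical time. The circuit $C$ is then assembled as three consecutive blocks acting on $|0^n\rangle$: a Hadamard-plus-CNOT block that prepares the uniform superposition $\frac{1}{\sqrt{|A|}}\sum_{x\in A}|x\rangle$, a diagonal block of $Z$, $S$, and CZ gates that imprints the phase $(-1)^{q(x)}$, and a final layer of $S$ gates that imprints $i^{\ell(x)}$.

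For the first block, fix a basis so that $A = \{Gy+v : y \in \F_2^k\}$ with $G \in \F_2^{n\times k}$, $v\in \F_2^n$, and $k = \dim A$. Applying $H^{\otimes k}$ to the ancillary register produces $\frac{1}{\sqrt{2^k}}\sum_y |y\rangle$; a CNOT network implementing $y \mapsto Gy$ then spreads this into the uniform superposition over $GA$, and a final layer of $X$ gates encodes the shift $v$. The core synthesis step is the Patel-Markov-Hayes algorithm, which compiles any $n$-qubit CNOT network into $O(n^2/\log n)$ two-qubit gates by running Gaussian elimination on $G$ in column blocks of width $\log n$ and amortizing repeated row-addition patterns within each block via a lookup table of size $O(n)$. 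The quadratic phase decomposes into at most $\binom{n}{2}$ CZ gates and $O(n)$ single-qubit $Z, S$ gates indexed by the coefficients of $q$, and an analogous log-blocking argument applied to the symmetric $\F_2$ matrix of CZ couplings keeps this layer within the same $O(n^2/\log n)$ budget.

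The main obstacle is verifying that the log-blocking speedup extends from pure CNOT circuits to the diagonal Clifford layer encoding the quadratic phase, since naively each of the $\binom{n}{2}$ coefficients of $q$ would demand its own CZ gate. This is addressed in the cited references~\cite{dehaene2003clifford,patel2003efficient}, which together establish $O(n^2/\log n)$ gate complexity for the full Clifford group; stabilizer-state preparation is the strictly easier sub-case, where the circuit only needs to act correctly on $|0^n\rangle$ (and not on an arbitrary input). Once the three blocks are in place, correctness is a routine computation: composing them and evaluating on $|0^n\rangle$ reproduces the canonical-form expression, so the output circuit $C$ indeed prepares $|\phi\rangle$ within the claimed gate count.
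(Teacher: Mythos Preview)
The paper does not prove this lemma at all: it is stated in the Preliminaries section as a known result imported verbatim from the cited references~\cite{dehaene2003clifford,patel2003efficient}, with no accompanying argument. So there is no ``paper's own proof'' to compare your proposal against; the authors treat Clifford synthesis as a black box.

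That said, your sketch is a reasonable outline of how the cited results actually go, and it is honest about the one nontrivial point: the naive CZ layer for the quadratic phase costs $\Theta(n^2)$ gates, and getting it down to $O(n^2/\log n)$ requires the full Clifford-synthesis machinery rather than just Patel--Markov--Hayes on CNOTs. You correctly flag this and defer to the references, which is exactly what the paper itself does (implicitly) by citing them. A couple of minor slips: you write ``uniform superposition over $GA$'' where you mean over the image $\{Gy : y\in\F_2^k\}$, and the description of the ancillary register is a bit loose (in practice one applies $H$ to $k$ of the $n$ data qubits and then a CNOT network to spread within the same register, rather than introducing a separate ancilla). None of this affects the high-level correctness of the sketch.
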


\subsection{Classical learning algorithms}

Our classical algorithmic $\PFR$ theorems will crucially use the recent result by Bri{\"e}t and Castro-Silva~\cite{briet2025near}, which can be thought of as dequantizing the agnostic learning algorithm for stabilizer states of~\cite{chen2024stabilizer}.
In particular, they showed that if one had \emph{query access} to the amplitude vector of the unknown quantum state $\ket{\psi}$,
then one can obtain a quadratic polynomial that most closely approximates the amplitudes.
Formally, they proved the following theorem.

\begin{theorem}[{\cite[Theorem~1.1]{briet2025near}}]
\label{thm:quadratic_GL}
Let $f: \FF_2^n \rightarrow [-1,1]$ be a $1$-bounded function and let $\varepsilon,\delta > 0$.
There is a randomized algorithm $\mathcal{A}$ that makes $n^2 \log n \log(1/\delta) (1/\varepsilon)^{O(\log(1/\varepsilon))}$ queries to $f$ and has $O(n^3)$ time complexity such that, with probability at least $1-\delta$, outputs a quadratic polynomial $p: \FF_2^n \rightarrow \FF_2$ satisfying
$$
|\Exp_{x \in \FF_2^n} [f(x) (-1)^{p(x)}]| \geq \max_{q \text{ quadratic }} |\Exp_{x \in \FF_2^n} [f(x) (-1)^{q(x)}]| - \varepsilon.
$$
\end{theorem}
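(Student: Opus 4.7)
The plan is to reduce the quadratic Goldreich-Levin problem to two successive applications of the classical linear Goldreich-Levin theorem, using discrete derivatives to strip away the quadratic part. Write any quadratic $q(x) = x^\top M x + b\cdot x + c$ with $M\in \F_2^{n\times n}$ symmetric. Over $\F_2$ the discrete derivative $\Delta_h q(x) := q(x+h) + q(x)$ is affine in $x$ for every fixed $h$, with linear coefficient $M h$. Thus if $f$ correlates at level $\tau$ with $(-1)^{q}$, Cauchy-Schwarz (applied to $\Exp_x f(x) f(x+h) (-1)^{\Delta_h q(x)}$ averaged over $h$) shows that the derivative $\Delta_h f(x) := f(x) f(x+h)$ correlates with the affine character $(-1)^{(Mh)\cdot x}$ for a $\tau^{O(1)}$-fraction of $h$. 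So the first step is: for $O(\log(1/\delta)/\tau^{O(1)})$ random offsets $h$, call the linear Goldreich-Levin algorithm (in the Kushilevitz-Mansour-style with query access) on $\Delta_h f$ to obtain, with high probability, a linear character $\ell_h$ such that $|\Exp_x \Delta_h f(x) (-1)^{\ell_h(x)}| \geq \tau^{O(1)}$.

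The second step is to assemble the map $h\mapsto \ell_h \in \F_2^n$ into an actual symmetric matrix $M$. A priori this map is only defined on a large but unstructured set $H\subseteq \F_2^n$ and is only approximately linear in $h$. Here is where the quasi-polynomial dependence on $1/\varepsilon$ enters: one invokes a Bogolyubov-Ruzsa/Sanders-type argument (or equivalently the $\PFR$-based homomorphism-testing statement used elsewhere in this paper) to conclude that, after restricting to a further sub-collection, the map $h\mapsto \ell_h$ agrees with a genuine linear map $h\mapsto \widetilde M h$ on a $\tau^{O(\log 1/\tau)}$-fraction of inputs. One can recover $\widetilde M$ explicitly by solving $O(n)$ linear systems derived from the collected samples, giving a candidate symmetric matrix with $|\Exp_x f(x)(-1)^{x^\top \widetilde M x + r(x)}|$ large for some unknown affine $r$. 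At this point the quadratic part is fixed, and one applies one more linear Goldreich-Levin to the modified function $f(x) (-1)^{x^\top \widetilde M x}$ to recover the linear term $b$ and constant $c$, after which a final list-pruning step (testing each candidate $q$ by Chernoff estimation of $\Exp_x f(x)(-1)^{q(x)}$) returns the best polynomial.

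To hit the stated complexity, I would be careful about sample reuse and pseudorandom generation across the many invocations of the linear Goldreich-Levin subroutine: by sharing the same $O(\log n)$-wise independent seed and a pool of $\widetilde O(n^2)$ sampled evaluations $f(x)$, the overall query count can be bounded by $n^2 \log n \cdot (1/\varepsilon)^{O(\log 1/\varepsilon)} \log(1/\delta)$. The $O(n^3)$ time bound comes from the linear algebra used to reconstruct $\widetilde M$ and, later, to express $x^\top \widetilde M x$ in the canonical basis. The randomized success probability $1-\delta$ is achieved by amplifying each Goldreich-Levin call to error $\delta/\poly(n,1/\tau)$ and taking a union bound over the $O(\log 1/\tau)$ rounds of derivative-recovery.

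The main obstacle, and the technical heart of the proof, is the second step: converting an approximately linear map $h\mapsto \ell_h$ defined only on a pseudo-random set $H\subseteq \F_2^n$ into a genuine linear map, while losing only a quasi-polynomial factor in $\tau$. A naive use of Balog-Szemer\'edi-Gowers plus Freiman-Ruzsa only gives exponential dependence on $1/\tau$; extracting the quasi-polynomial $(1/\varepsilon)^{O(\log 1/\varepsilon)}$ requires Sanders' quantitative Bogolyubov-Ruzsa theorem and an iterative density-increment inside $\F_2^n$. Making this step algorithmic, rather than purely existential, is what forces the $n^2$ scaling and the careful sample-reuse accounting, and is precisely the contribution of \cite{briet2025near} that this paper inherits as a black box.
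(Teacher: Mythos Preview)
The paper does not prove this statement. Theorem~\ref{thm:quadratic_GL} is quoted in the Preliminaries section as \cite[Theorem~1.1]{briet2025near} and is used throughout as a black box; there is no proof in this paper for you to compare against. You in fact acknowledge this yourself in your final sentence.

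That said, your sketch is a plausible route to \emph{a} quadratic Goldreich--Levin theorem (it is essentially the Tulsiani--Wolf strategy of taking derivatives, running linear Goldreich--Levin on each $\Delta_h f$, and then linearizing the map $h\mapsto \ell_h$), but based on what this paper says about \cite{briet2025near}, it is \emph{not} the route taken there. The paper describes \cite{briet2025near} as a dequantization of the stabilizer-learning algorithm of Chen, Gong, Ye, and Zhang~\cite{chen2024stabilizer}; the quasipolynomial factor $(1/\varepsilon)^{O(\log 1/\varepsilon)}$ is inherited from the bootstrapping in Theorem~\ref{thm:sitanbootstrapping}, not from Sanders' Bogolyubov--Ruzsa bound as you suggest. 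Indeed, with $\PFR$ now available the approximate-homomorphism step would give a polynomial loss, not a quasipolynomial one, so your accounting for where the $(1/\varepsilon)^{O(\log 1/\varepsilon)}$ arises is off. If you want to reconstruct the actual argument, the relevant template is the proof of Lemma~\ref{lem:findingfreimanquantum} in this paper (which runs \cite{chen2024stabilizer} and then post-processes the resulting stabilizer state into a quadratic phase); \cite{briet2025near} should be the classical analogue of that pipeline rather than the derivative-based approach you outline.
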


\section{Algorithmic lemmas}
\label{sec:lemmas}

In this section, we will prove lemmas that will be useful in establishing our quantum and classical algorithmic $\PFR$ theorems.
\subsection{Probabilistic dense model and sparse set localization}

The following lemmas provide efficient randomized algorithms for finding Freiman isomorphisms and localizing sparse sets by showing that there is an abundance of them, and hence they can be sampled at random efficiently.

\begin{lemma}[Localizing a sparse set]
\label{lem:spansample}
   Let $\varepsilon, \delta >0$ and $A \subseteq \FF_2^n$. Let $m := \log |\vspan(A)|$, $k := \lceil 2m/\varepsilon\rceil \cdot \lceil\log(1/\delta)\rceil$.
    If $v_1, \dots, v_k$ are uniformly random elements of~$A$ then, with probability at least $1-\delta$, we have
    $$|A\cap \vspan\{v_1, \dots, v_k\}| \geq (1-\varepsilon)|A|.$$
\end{lemma}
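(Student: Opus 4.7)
The plan is to track how quickly random samples from $A$ grow a subspace that captures most of $A$. Define $U_i = \vspan\{v_1, \ldots, v_i\}$ with $U_0 = \{0\}$, and set $p_i = |A \cap U_i|/|A|$. The subspaces form a nested chain inside $\vspan(A)$, so $\dim(U_k) \leq m$ always, and the fractions $p_i$ are non-decreasing. The goal is to show $\Pr[p_k < 1 - \varepsilon] \leq \delta$.

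The key observation is that as long as $p_i < 1 - \varepsilon$, conditioning on the history $v_1, \dots, v_i$, the next sample lies outside $U_i$ with probability $1 - p_i > \varepsilon$; and whenever $v_{i+1} \notin U_i$ the dimension strictly increases. This motivates a coupling with i.i.d.\ $\operatorname{Ber}(\varepsilon)$ variables $Y_1, \ldots, Y_k$: at step $i$, if $p_{i-1} < 1 - \varepsilon$, then after sampling $v_i$ we set $Y_i = 1$ with conditional probability $\varepsilon/(1 - p_{i-1}) \leq 1$ on the event $\{v_i \notin U_{i-1}\}$ (and $Y_i = 0$ otherwise); if $p_{i-1} \geq 1 - \varepsilon$ we sample $Y_i$ independently. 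A direct computation shows the $Y_i$ are genuinely i.i.d.\ $\operatorname{Ber}(\varepsilon)$, and by construction $Y_i \leq \one[v_i \notin U_{i-1}]$ whenever $p_{i-1} < 1 - \varepsilon$.

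Now, on the failure event $F = \{p_k < 1 - \varepsilon\}$ the inequality $p_{i-1} < 1 - \varepsilon$ holds at every step by monotonicity, so telescoping the dimension increments gives
\[
\sum_{i=1}^{k} Y_i \;\leq\; \sum_{i=1}^{k} \one[v_i \notin U_{i-1}] \;=\; \dim(U_k) \;\leq\; m.
\]
Hence $\Pr[F] \leq \Pr[\operatorname{Bin}(k,\varepsilon) \leq m]$, and since $\mathbb{E}[\operatorname{Bin}(k,\varepsilon)] = \varepsilon k \geq 2m \lceil \log(1/\delta) \rceil$, a Chernoff lower-tail bound closes out the argument.

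The main (minor) obstacle is matching the exact constants in the stated choice of $k$. The crude multiplicative Chernoff bound gives $\Pr[\operatorname{Bin}(k,\varepsilon) \leq m] \leq \exp(-\varepsilon k/8) \leq \delta^{m/4}$, which suffices for $m \geq 4$; for smaller $m$ one either uses the sharper KL-divergence form of Chernoff (which handles the rare-event regime tightly) or disposes of these cases directly, since then $|\vspan(A)| \leq 8$ and a handful of samples trivially suffice. The conceptual content of the proof lies entirely in the coupling; everything else is routine concentration.
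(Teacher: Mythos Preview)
Your argument is correct. The coupling to i.i.d.\ $\operatorname{Ber}(\varepsilon)$ variables is set up properly, the inclusion $F \subseteq \{\operatorname{Bin}(k,\varepsilon) \le m\}$ is justified, and the Chernoff step (with your caveat about very small $m$) finishes things off.

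The paper takes a different tack in execution. Both proofs rest on the same observation---while $|A\cap U_i|<(1-\varepsilon)|A|$, each fresh sample increases the dimension with conditional probability exceeding $\varepsilon$---but the paper exploits this via an \emph{expectation} argument rather than a coupling. It first shows that with only $t=\lceil 2m/\varepsilon\rceil$ samples the success probability is already $\ge 1/2$: assuming the contrary, one lower-bounds $\mathbb{E}[\dim V_t] > \varepsilon t/2 \ge m$, contradicting $\dim V_t \le m$. It then amplifies by viewing the $k$ samples as $\lceil\log(1/\delta)\rceil$ independent blocks of size $t$ and noting that success in any one block implies success overall (spans are monotone). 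This two-stage ``first hit $1/2$, then boost'' structure delivers the stated $k$ exactly, with no separate treatment of small $m$ and no appeal to Chernoff. Your single-shot coupling is arguably more modular and closer to how one would phrase the result as a stochastic domination statement, but it pays for that with the constant-tuning you already flagged.
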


\begin{proof}
Let $\ell \geq 2$ be an integer to be chosen later, and let $v_1, \dots, v_\ell$ be $\ell$ independent random elements of $A$.
Let $V_0 = \{0\}$ and, for each $1\leq i\leq \ell$, denote the linear span of the first $i$ random elements $v_1, \dots, v_i$ by $V_i$. Suppose first that
\begin{equation}
\label{eq:probbound}
    \Pr_{v_1, \dots, v_\ell\in A}\big[|A\cap V_\ell| \geq (1-\varepsilon) |A|\big] < 1/2.
\end{equation}
Then $\Pr_{v_1, \dots, v_\ell\in A}\big[|A\setminus V_\ell| > \varepsilon |A|\big] \geq 1/2$, and so
\begin{align}
    \label{eq:lowerboundonA-Vi}
    \Pr_{v_1, \dots, v_i\in A}\big[|A\setminus V_i| \geq \varepsilon |A|\big] > 1/2 \quad \text{for all $0\leq i \leq \ell$.}
\end{align}
It follows that
\begin{align*}
    \Exp_{v_1, \dots, v_\ell\in A} \big[\dim(V_\ell)\big] &= \sum_{i=1}^\ell \Pr_{v_1, \dots, v_i\in A}\big[v_i \notin V_{i-1}\big] \\
    &\geq \sum_{i=1}^\ell \Pr_{v_1, \dots, v_{i-1}\in A}\Big[|A\setminus V_{i-1}| \geq \varepsilon |A|\Big] \cdot \Pr_{v_1, \dots, v_i\in A}\Big[v_i \notin V_{i-1} \Big| |A\setminus V_{i-1}| \geq \varepsilon |A|\Big] \\
    &> \sum_{i=1}^\ell 1/2 \cdot \varepsilon
    = \varepsilon \ell/2,
\end{align*}
where the final inequality used Eq.~\eqref{eq:lowerboundonA-Vi}. 
Since $V_\ell \subseteq \vspan(A)$, we must have $\dim(V_\ell) \leq \log |\vspan(A)| = m$, and thus $\ell < 2m/\varepsilon$ is required for equation~\eqref{eq:probbound} to hold. Denoting $t := \lceil 2m/\varepsilon\rceil$, we conclude~that
\begin{equation*}
    \Pr_{v_1, \dots, v_t\in A}\big[|A\cap \vspan\{v_1, \dots, v_t\}| \geq (1-\varepsilon) |A|\big] \geq 1/2.
\end{equation*}
Repeating this sampling $\lceil\log(1/\delta)\rceil$ times independently at random, the probability that we succeed at least once is at least $1-\delta$.
With $k = t \lceil\log(1/\delta)\rceil$, it follows that
$$
\Pr_{v_1, \dots, v_k\in A}\Big[|A\cap \vspan\{v_1, \dots, v_k\}| \geq (1-\varepsilon) |A|\Big] \geq 1-\delta,
$$
proving the lemma statement.
\end{proof}

\begin{lemma}[Algorithmic dense model]
\label{lem:randomFreiman}
    Let $\delta > 0$,  $A\subseteq \FF_2^n$ and  let $m \geq \log |4A| + \log 1/\delta$ be an integer.
    Suppose $\pi: \FF_2^n \rightarrow \FF_2^m$ is a random linear map.
    Then $A$ is Freiman-isomorphic to $\pi(A)$ with probability at least $1-\delta$.
\end{lemma}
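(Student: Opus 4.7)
The plan is to reduce the Freiman-isomorphism condition to a single avoidance event on $4A$ and then apply a union bound over a uniformly random linear $\pi$.

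First I would note that the forward direction of the Freiman condition is automatic for any linear $\pi$: if $a+b=c+d$ then linearity immediately gives $\pi(a)+\pi(b)=\pi(c)+\pi(d)$. The substance is the reverse implication for $a,b,c,d\in A$,
$$\pi(a)+\pi(b)=\pi(c)+\pi(d)\ \Longrightarrow\ a+b=c+d.$$
Setting $y=a+b+c+d$ and using characteristic $2$, this is equivalent to requiring that $\pi(y)\neq 0$ for every nonzero $y\in 4A$. Specializing to $c=d=a$ shows that this avoidance already forces $\pi|_A$ to be injective, so the bijectivity requirement in the definition of a Freiman isomorphism need not be checked separately.

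It then suffices to bound $\Pr_\pi[\exists\, y\in 4A\setminus\{0\}:\pi(y)=0]$. A uniformly random linear map $\pi:\FF_2^n\to\FF_2^m$ can be realized as an $m\times n$ matrix over $\FF_2$ with i.i.d.\ uniform entries, and for each fixed nonzero $y\in\FF_2^n$ the image $\pi(y)$ is then uniform in $\FF_2^m$, so $\Pr[\pi(y)=0]=2^{-m}$. A union bound over at most $|4A|$ bad vectors yields
$$\Pr\big[\exists\, y\in 4A\setminus\{0\}:\pi(y)=0\big]\leq |4A|\cdot 2^{-m}\leq\delta,$$
where the last step uses the hypothesis $m\geq\log|4A|+\log(1/\delta)$. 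On the complementary event, $\pi$ restricts to a Freiman isomorphism from $A$ onto $\pi(A)$, which is exactly the claim.

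The proof is essentially frictionless; the only non-mechanical step is the translation in paragraph two, which compresses the reverse Freiman condition and the injectivity requirement into one clean avoidance event parameterized by $4A$. This is also what dictates the threshold $m \gtrsim \log|4A|$: asking instead for $\pi$ to be injective on all of $\vspan(A)$ would balloon the bound to $\log|\vspan(A)|$, which is why tracking $4A$ rather than the ambient span is what makes the lemma useful as a dense-model step.
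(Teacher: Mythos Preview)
Your proposal is correct and follows essentially the same argument as the paper: reduce the reverse Freiman implication (and injectivity) to the single avoidance event $\pi(y)\neq 0$ for all nonzero $y\in 4A$, then apply a union bound using that $\pi(y)$ is uniform in $\FF_2^m$ for each fixed nonzero $y$. The paper's version is slightly sharper in writing the bound as $(|4A|-1)/2^m$, but this makes no difference.
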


\begin{proof}
Recall that $\pi$ is a Freiman isomorphism between $A$ and $\pi(A)$ if
$$
\forall a, b, c, d\in A:\: a+b+c+d = 0 \iff \pi(a)+\pi(b) + \pi(c)+\pi(d) = 0.
$$
Observe that the property above implies that $\pi$ is bijective.\footnote{To see this, if $c=d$, then the implication above gives $a=b \iff \pi(a)=\pi(b)$, which implies $\pi$ is a bijection.}
If $\pi: \FF_2^n \rightarrow \FF_2^m$ is a linear map, then the forward implication is automatically satisfied, and moreover
$$\pi(a)+\pi(b) + \pi(c)+\pi(d) = \pi(a+b+c+d).$$
It then suffices to check that
$$\forall a, b, c, d\in A:\: \pi(a+b+c+d) = 0 \implies a+b+c+d = 0,$$
which is equivalent to requiring that $\pi(x) \neq 0$ for all nonzero $x\in 4A$.

Now let $\pi: \FF_2^n \rightarrow \FF_2^m$ be a uniformly random linear map.
Then, for each $x\in 4A\setminus\{0\}$ individually, $\pi(x)$ is uniformly distributed over $\FF_2^m$.
It follows from the union bound that
$$
\Pr\big[\exists x\in 4A\setminus\{0\}:\: \pi(x) = 0\big] \leq \frac{|4A|-1}{2^m},
$$
which is less than $\delta$ if $2^m \geq |4A|/\delta$.
This concludes the proof.
\end{proof}

\subsection{Algorithmic restricted homomorphism}
We will show the following slight strengthening of the homomorphism testing formulation of the $\PFR$ theorem, which we prove
following the approach of Green~\cite{green2005notes} and Green-Tao~\cite{green2010equivalence}, and then algorithmize.

\begin{lemma}[Restricted homomorphism testing]
\label{lem:closeaffine}
Suppose $S\subseteq \F_2^m$ and $f: S \to \F_2^n$ satisfy
$$\big|\big\{(x_1, x_2, x_3, x_4)\in S^4:\: x_1+x_2 = x_3+x_4 \,\text{ and }\, f(x_1)+f(x_2) = f(x_3)+f(x_4)\big\}\big| \geq 2^{3m}/K.$$
Then there exists an affine-linear function $\psi: \F_2^m \to \F_2^n$ such that $f(x) = \psi(x)$ for at least $2^m/P_4(K)$ values of $x\in S$.
\end{lemma}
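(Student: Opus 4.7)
The plan is to pass to the graph of $f$ and invoke the additive-combinatorial machinery assembled in the preliminaries. Set $\Gamma_f := \{(x,f(x)) : x \in S\} \subseteq \F_2^{m+n}$; the hypothesis is precisely that $E(\Gamma_f) \geq 2^{3m}/K$, because the compatible quadruples in $S$ are in bijection with additive quadruples in the graph. From $E(\Gamma_f) \leq |S|^3$ we immediately get the density lower bound $|S| \geq 2^m/K^{1/3}$, which we will use at the very end; and since $|\Gamma_f| = |S| \leq 2^m$, we have $E(\Gamma_f) \geq |\Gamma_f|^3/K$, the hypothesis needed to apply Theorem~\ref{thm:BSG}. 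This produces $\Gamma' \subseteq \Gamma_f$ of size at least $|\Gamma_f|/\poly(K)$ and doubling at most $\poly(K)$, and then Theorem~\ref{thm:marton_conjecture} covers $\Gamma'$ by at most $\poly(K)$ translates of a subspace $V \leq \F_2^{m+n}$ with $|V| \leq |\Gamma'|$. A pigeonhole over the translates yields some $v \in \F_2^{m+n}$ with $|\Gamma_f \cap (v+V)| \geq |S|/Q(K)$ for a polynomial $Q$.

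The main extraction step analyses $V$ as a subspace of $\F_2^m \oplus \F_2^n$. Let $U := \pi_1(V)$ be its horizontal projection and $W := V \cap (\{0\} \times \F_2^n)$ its vertical component, so $|V| = |U|\cdot|W|$. Because $\Gamma_f$ is the graph of a function, $\pi_1$ is injective on $\Gamma_f$, which forces $|U| \geq |\Gamma_f \cap (v+V)| \geq |S|/Q(K)$; combined with $|V| \leq |\Gamma'| \leq |S|$, this yields the crucial bound $|W| \leq Q(K)$. Fixing any complement $V_1$ to $W$ inside $V$, the restriction $\pi_1|_{V_1}: V_1 \to U$ is a linear isomorphism, so $V_1$ is the graph of a linear map $L: U \to \F_2^n$, which we extend arbitrarily to a linear map $\tilde L: \F_2^m \to \F_2^n$. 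Writing $v = (v_m, v_n)$, every point of $\Gamma_f \cap (v+V)$ then takes the form $(z,\tilde L(z) + c + w)$ for a unique $w \in W$, where $c := v_n - \tilde L(v_m)$.

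A final pigeonhole over the polynomially-sized set $W$ selects $w^* \in W$ such that the affine-linear function $\psi(z) := \tilde L(z) + c + w^*$ satisfies $f(z) = \psi(z)$ for at least $|\Gamma_f \cap (v+V)|/|W| \geq |S|/Q(K)^2$ values of $z \in S$. Combining with the density lower bound $|S| \geq 2^m/K^{1/3}$ gives at least $2^m/P_4(K)$ agreements for $P_4(K) := K^{1/3}\cdot Q(K)^2$, concluding the proof. The main obstacle, and the only step that genuinely uses that $f$ is a function rather than an arbitrary set, is the bound $|W| \leq Q(K)$: without it the final pigeonhole would cost a factor as large as $|V|$, and polynomial dependence on $K$ would be lost. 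Everything else is careful bookkeeping of the BSG--PFR--pigeonhole pipeline to ensure only polynomial-in-$K$ losses accumulate.
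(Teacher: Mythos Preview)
Your proposal is correct and follows essentially the same approach as the paper's proof: pass to the graph $\Gamma_f$, apply BSG and then PFR to obtain a subspace cover, and exploit the fact that $\pi_1$ is injective on a graph to bound the vertical kernel $W = V \cap (\{0\}\times\F_2^n)$ by a polynomial in $K$, so that a pigeonhole over $W$ and over the translates extracts the affine-linear $\psi$. The paper's $\ker_H(\pi)$ and $H'$ are exactly your $W$ and $V_1$, and the paper also uses $E(\Gamma)\leq|\Gamma|^3$ to recover a $2^m/\poly(K)$ count at the end; only the bookkeeping (ordering of the two pigeonhole steps and the resulting constants) differs.
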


\begin{proof}
Consider the ``graph'' set
$$\Gamma = \big\{(x, f(x)):\: x\in S\big\} \subseteq \F_2^{m+n}.$$
Then $|\Gamma| = |S| \leq 2^m$ and $E(\Gamma) \geq 2^{3m}/K \geq |\Gamma|^3/K$.
By the Balog-Szemer\'{e}di-Gowers Theorem (Theorem~\ref{thm:BSG}), there exists a set $\Gamma' \subseteq \Gamma$ such that
$$|\Gamma'| \geq |\Gamma|/P_{BSG}^{(1)}(K) \quad \text{and} \quad |\Gamma'+\Gamma'| \leq P_{BSG}^{(2)}(K)\cdot |\Gamma|.$$
By the combinatorial $\PFR$ theorem (Theorem~\ref{thm:marton_conjecture}), it then follows that $\Gamma'$ can be covered by
$$K' := P_1\big(P_{BSG}^{(1)}(K) P_{BSG}^{(2)}(K)\big)$$
translates of a subspace $H\leq \F_2^{m+n}$ of size $|H| \leq |\Gamma'|$, say
$$
\Gamma' \subseteq \bigcup_{i=1}^{K'} (u_i + H).
$$
Let $\pi: \F_2^{m+n} \to \F_2^m$ denote the projection map onto the first $m$ coordinates:
$\pi(x, y) = x$ for $x\in \F_2^m$, $y\in \F_2^n$.
Let $\ker_H(\pi) = H \cap \big(\{0^m\} \times \F_2^n\big)$ be the kernel of $\pi$ restricted to $H$ and let $H'$ be a complemented subspace of $\ker_H(\pi)$ in $H$, so that $H = H' \oplus \ker_H(\pi)$.
By linearity and the injectivity of $\pi$ on $H'$, there exists a matrix $M\in \F_2^{n\times m}$ such that
\begin{equation}
\label{eq:Hprime}
    H' = \big\{(x, Mx):\: x\in \pi(H)\big\},
\end{equation}
and by the rank-nullity theorem we have that
$$|H| = |\ker_H(\pi)|\cdot |\pi(H)|.$$
Moreover, since $\Gamma'$ is a graph, we have
$$\big|\Gamma' \cap (u_i+H)\big| = \big|\pi\big(\Gamma' \cap (u_i+H)\big)\big| \leq |\pi(u_i+H)| = |\pi(H)|,$$
and thus
$$|\Gamma'| = \Bigg|\Gamma'\cap \bigcup_{i=1}^{K'} (u_i+H)\Bigg| \leq \sum_{i=1}^{K'} \big|\Gamma'\cap (u_i+H)\big| \leq K' |\pi(H)|,$$
from which we conclude that $|\pi(H)| \geq |\Gamma'|/K'$.
Finally, since $H = H' \oplus \ker_H(\pi)$, we have
$$|\Gamma'| = \Bigg|\Gamma'\cap \bigcup_{i=1}^{K'} \bigcup_{v\in \ker_H(\pi)} (u_i+v+H')\Bigg| \leq \sum_{i=1}^{K'} \sum_{v\in \ker_H(\pi)} \big|\Gamma'\cap (u_i+v+H')\big|;$$
there must then exist some translate $u_i$ and some $v\in \ker_H(\pi)$ such that
$$\big|\Gamma'\cap (u_i+v+H')\big| \geq \frac{|\Gamma'|}{K' |\ker_H(\pi)|}.$$
Using the assumption $|H| \leq |\Gamma'|$, the identity $|H| = |\ker_H(\pi)|\cdot |\pi(H)|$ and the bound $|\pi(H)| \geq |\Gamma'|/K'$, we conclude from the last inequality that
$$\big|\Gamma'\cap (u_i+v+H')\big| \geq \frac{|H|}{K' |\ker_H(\pi)|} = \frac{|\pi(H)|}{K'} \geq \frac{|\Gamma'|}{(K')^2}.$$

We can now easily conclude.
Fixing $u_i = (x_1, y_1)$, $v = (x_2, y_2) \in \F_2^{m+n}$ such that the above inequality holds, we obtain from the description of $H'$ (equation~\eqref{eq:Hprime}) that
\begin{align*}
    \Gamma'\cap (u_i+v+H') &= \Gamma' \cap \big\{(x+x_1+x_2,\, Mx+y_1+y_2):\: x\in \pi(H)\big\} \\
    &= \Gamma' \cap \big\{(x,\, Mx - Mx_1 - Mx_2 + y_1 + y_2):\: x\in \pi(H)+x_1+x_2\big\}.
\end{align*}
There must then be at least $|\Gamma'|/(K')^2$ values of $x\in S$ such that $(x, f(x)) \in \Gamma'$ and
$$f(x) = Mx - Mx_1 - Mx_2 + y_1 + y_2.$$
Denote $\psi(x) = Mx - Mx_1 - Mx_2 + y_1 + y_2$.
Recalling that $|\Gamma'| \geq |\Gamma|/P_{BSG}^{(1)}(K)$ and
$$2^{3m}/K \leq E(\Gamma) \leq |\Gamma|^3,$$
we obtain that $f(x) = \psi(x)$ for at least
$$\frac{|\Gamma'|}{(K')^2} = \frac{|\Gamma'|}{P_1\big(P_{BSG}^{(1)}(K) P_{BSG}^{(2)}(K)\big)^2} \geq \frac{2^m}{K P_{BSG}^{(1)}(K) P_1\big(P_{BSG}^{(1)}(K) P_{BSG}^{(2)}(K)\big)^2}$$
values of $x\in S$.
The theorem follows with $P_4(K) = K P_{BSG}^{(1)}(K) P_1\big(P_{BSG}^{(1)}(K) P_{BSG}^{(2)}(K)\big)^2$.
\end{proof}

The main technical lemma for our proofs is an algorithmic version of this last result, which relies on the quadratic Goldreich-Levin theorem (see Theorem~\ref{thm:quadratic_GL}).

\begin{lemma}[Algorithmic restricted homomorphism]
\label{lem:findingaffine}
Suppose $S\subseteq \F_2^m$ and $f: S \to \F_2^n$ satisfy
$$\big|\big\{(x_1, x_2, x_3, x_4)\in S^4:\: x_1+x_2 = x_3+x_4 \,\text{ and }\, f(x_1)+f(x_2) = f(x_3)+f(x_4)\big\}\big| \geq 2^{3m}/K.$$
There is a randomized algorithm that makes $K^{O(\log K)} (m+n)^2 \log (m+n)$ queries to $S$ and to $f$, runs in $K^{O(\log K)} (m+n)^3 \log (m+n)$ time and, with probability at least $0.7$, returns $M\in \F_2^{n\times m}$, $v\in \F_2^n$ such that
$$\big|\big\{x\in S:\: f(x) = Mx+v\big\}\big| \geq 2^m/P_4'(K).$$
\end{lemma}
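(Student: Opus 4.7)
The plan is to algorithmize the combinatorial proof of Lemma~\ref{lem:closeaffine} by encoding the desired affine-linear structure of $f$ as a quadratic Fourier correlation of an auxiliary function on $\F_2^{m+n}$, which can then be extracted using the quadratic Goldreich-Levin theorem (Theorem~\ref{thm:quadratic_GL}).

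Concretely, define the $1$-bounded function $g\colon \F_2^{m+n}\to[-1,1]$ by $g(x,y) := \one_S(x)(-1)^{f(x)\cdot y}$; each evaluation costs one query to $\one_S$ and one to $f$. The first step is to apply Lemma~\ref{lem:closeaffine} to obtain $M^*\in \F_2^{n\times m}$, $v^*\in \F_2^n$ with $f(x) = M^*x+v^*$ on at least $2^m/P_4(K)$ points $x\in S$, and to observe that the quadratic polynomial $q^*(x,y) := y^T M^*x + v^*\cdot y$ witnesses
\[
\EE_{x,y}\, g(x,y)(-1)^{q^*(x,y)} = \EE_x\, \one_S(x)\, \EE_y\, (-1)^{(f(x)+M^*x+v^*)\cdot y} = \frac{|\{x\in S : f(x)=M^*x+v^*\}|}{2^m} \geq \frac{1}{P_4(K)}.
\]
Applying Theorem~\ref{thm:quadratic_GL} to $g$ with $\varepsilon = 1/(2P_4(K))$ and constant failure probability then returns a quadratic polynomial $q\colon \F_2^{m+n}\to \F_2$ with $|\EE\, g\cdot (-1)^q| \geq 1/(2P_4(K))$; the query and time bounds claimed for our lemma correspond exactly to this single GL invocation, with the polynomial dependence on $1/\varepsilon$ absorbed into the $K^{O(\log K)}$ factor.

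Decomposing the learned quadratic canonically as $q(x,y) = q_X(x) + q_Y(y) + y^T M x$, I read the matrix $M$ off directly from its cross-bilinear monomial coefficients. To recover the shift $v$, setting $\phi(y) := (-1)^{q_Y(y)}$ recasts the correlation as
\[
\EE_{x,y}\, g(x,y)(-1)^{q(x,y)} = \EE_x\, \one_S(x)(-1)^{q_X(x)}\, \widehat{\phi}\big(f(x)+Mx\big),
\]
and the triangle inequality yields $\EE_x\, \one_S(x)\, |\widehat{\phi}(f(x)+Mx)| \geq 1/(2P_4(K))$. Parseval gives $\sum_w |\widehat\phi(w)|^2 = 1$, so the heavy set $\Omega := \{w : |\widehat\phi(w)|\geq 1/(4P_4(K))\}$ has $|\Omega|\leq 16 P_4(K)^2$. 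Splitting the correlation by membership in $\Omega$ shows $\Pr_x[x\in S \text{ and } f(x)+Mx\in \Omega]\geq 1/(4P_4(K))$, and pigeonhole over $\Omega$ produces some $v\in \Omega$ with $|\{x\in S : f(x) = Mx+v\}|\geq 2^m/(64 P_4(K)^3) =: 2^m/P_4'(K)$.

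To identify such a $v$ algorithmically, I sample uniformly from $S$ by rejection from $\F_2^m$---feasible since the hypothesis gives $|S|^3\geq 2^{3m}/K$ and hence $|S|\geq 2^m/K^{1/3}$, costing $O(K^{1/3})$ queries per sample in expectation---and tabulate $v_x := f(x)+Mx$ over $O(P_4'(K)\log(1/\delta))$ samples. The target $v$ produced by the pigeonhole step has $S$-density at least $1/P_4'(K)$ and is thus observed with high probability; each candidate is verified by using $\poly(K)$ additional uniform samples from $\F_2^m$ to empirically estimate its $\F_2^m$-density of agreement to precision $1/(4P_4'(K))$, and any candidate surviving verification is returned. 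This verification consumes only $\poly(K)$ queries and $\poly(K)(m+n)$ time, and is dominated by the GL call. The main obstacle in the proof is the Fourier-analytic extraction: whereas the idealized witness $q^*$ has no pure-$y$ quadratic terms, the quadratic returned by GL may contain a nonzero $q_Y$, so the inner expectation over $y$ becomes a general Fourier coefficient of $\phi$ rather than an indicator, and it is the Parseval/pigeonhole argument that absorbs this at the cost of the cubic blow-up $P_4 \mapsto P_4'$.
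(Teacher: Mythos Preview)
Your proposal is correct and follows essentially the same route as the paper: encode $f$ via $g(x,y)=\one_S(x)(-1)^{f(x)\cdot y}$, invoke quadratic Goldreich--Levin to learn a correlating quadratic, extract the cross-bilinear part as $M$, and use Parseval plus pigeonhole on the pure-$y$ phase to locate the shift $v$ by sampling. Your direct computation showing the witness $q^*(x,y)=y^TM^*x+v^*\cdot y$ has correlation exactly $|\{x\in S:f(x)=M^*x+v^*\}|/2^m$ is in fact slightly cleaner than the paper's Cauchy--Schwarz argument for the same claim, but the overall structure, constants, and complexity accounting coincide.
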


\begin{proof}
Define the function $g: \F_2^{m+n} \to \{-1, 0, 1\}$ by
$$g(x, y) = \one_S(x)\cdot (-1)^{f(x)\cdot y}.$$
Note that one query to $g$ can be made using one query to $S$, one query to $f$ and $O(n)$ time.
We first show that $g$ correlates well with a quadratic function:

\begin{claim}
\label{claim:highu3}
    There exists a quadratic polynomial $p: \F_2^{m+n} \to \F_2$ such that
    $$\Big|\Exp_{\substack{x\in \F_2^m,\\ y\in \F_2^n}} g(x,y) (-1)^{p(x,y)}\Big| \geq \frac{1}{P_4(K)},$$
    where $P_4(\cdot)$ is the polynomial promised by Lemma~\ref{lem:closeaffine}.
\end{claim}

\begin{proof}
From Lemma~\ref{lem:closeaffine}, we know there exists an affine-linear function $\psi: \F_2^m \to \F_2^n$ such that
$$\Pr_{x\in \F_2^m} \big[x\in S \,\text{ and }\, f(x) = \psi(x)\big] \geq \frac{1}{P_4(K)}.$$
Let $E$ be the set where $f$ and $\psi$ agree:
$$E = \big\{x\in S:\: f(x) = \psi(x)\big\}.$$
Note that $g(x, y) = (-1)^{\psi(x)\cdot y}$ for all $x\in E$, $y\in \F_2^n$, and so by Cauchy-Schwarz
\begin{align*}
    \Exp_{x\in \F_2^m} \one_E(x)
    &= \Exp_{x\in \F_2^m} \Big(\one_E(x) \cdot \Exp_{y\in \F_2^n} g(x,y) (-1)^{\psi(x)\cdot y}\Big) \\
    &\leq \Big(\Exp_{x\in \F_2^m} \one_E(x)^2 \Big)^{1/2} \Big(\Exp_{x\in \F_2^m}\Big(\Exp_{y\in \F_2^n} g(x,y) (-1)^{\psi(x)\cdot y}\Big)^2\Big)^{1/2} \\
    &= \Big(\Exp_{x\in \F_2^m} \one_E(x) \Big)^{1/2} \Big(\Exp_{x\in \F_2^m} \Exp_{y, y'\in \F_2^n} g(x,y) g(x,y') (-1)^{\psi(x)\cdot (y+y')}\Big)^{1/2} \\
    &= \Big(\Exp_{x\in \F_2^m} \one_E(x) \Big)^{1/2} \Big(\Exp_{x\in \F_2^m} \Exp_{z\in \F_2^n} \one_S(x) (-1)^{f(x) \cdot z} (-1)^{\psi(x)\cdot z}\Big)^{1/2}.
\end{align*}
We conclude that
$$\Big|\Exp_{x\in \F_2^m} \Exp_{z\in \F_2^n} g(x,z) (-1)^{\psi(x)\cdot z}\Big| \geq \Exp_{x\in \F_2^m} \one_E(x) = \Pr_{x\in \F_2^m}[x\in E] \geq \frac{1}{P_4(K)}.$$
The quadratic function $p: (x,z) \mapsto \psi(x)\cdot z$ thus satisfies the claim.
\end{proof}

We now use the quadratic Goldreich-Levin theorem (Theorem~\ref{thm:quadratic_GL}) with $f$ replaced by $g$ and $\varepsilon := 1/(2P_4(K))$.
We conclude that, in $O((m+n)^3)$ time and using $(m+n)^2 \log (m+n) \cdot K^{O(\log(K))}$ queries to $g$, we can obtain a quadratic function $q: \F_2^{m+n} \rightarrow \F_2$ which satisfies the following with probability at least $0.9$:
\begin{equation}
\label{eq:highcorrelation}
    \Big|\Exp_{x\in \F_2^m,\, y\in \F_2^n} \one_S(x) (-1)^{f(x)\cdot y} (-1)^{q(x,y)}\Big| \geq \frac{1}{2 P_4(K)}.
\end{equation}
Assume that this inequality holds, and write
$$q(x, y) = (x,y)^T A (x,y) + u\cdot x + u'\cdot y + b,$$
where $A \in \F_2^{(m+n) \times (m+n)}$, $u\in \F_2^m$, $u'\in \F_2^n$ and $b\in \F_2$.
Denote the $(m\times n)$-submatrix of $A$ defined by its first $m$ rows and last $n$ columns by $A_{12}$, and the $(n\times m)$-submatrix of $A$ defined by its last $n$ rows and first $m$ columns by $A_{21}$.
We claim that $f$ agrees often with an affine-linear function whose linear part equals $(A_{12}^T + A_{21}) x$:

\begin{claim}
    If equation~\eqref{eq:highcorrelation} holds, then there exists some $z_0\in \F_2^n$ such that
    \begin{equation}
    \label{eq:interim_agreement_funcs}
    \big|\big\{x\in S:\: f(x) = (A_{12}^T + A_{21}) x + z_0\big\}\big| \geq \frac{2^m}{64 P_4(K)^3}.
\end{equation}
\end{claim}

\begin{proof}
Define the bilinear form $B: \F_2^m \times \F_2^n \to \F_2$ by
$$B(x,y) = q(x, y) - q(x, 0) - q(0, y) + q(0, 0).$$
We have that
\begin{align*}
    B(x,y) &= \begin{bmatrix} x & y \end{bmatrix} A \begin{bmatrix} x \\ y \end{bmatrix} + \begin{bmatrix} u & u' \end{bmatrix} \begin{bmatrix} x \\ y \end{bmatrix} + b 
    - \begin{bmatrix} x & 0^n \end{bmatrix} A \begin{bmatrix} x \\ 0^n \end{bmatrix} - \begin{bmatrix} u & u' \end{bmatrix} \begin{bmatrix} x \\ 0^n \end{bmatrix} - b  \\
    &\qquad - \begin{bmatrix} 0^m & y \end{bmatrix} A \begin{bmatrix} 0^m \\ y \end{bmatrix} - \begin{bmatrix} u & u' \end{bmatrix} \begin{bmatrix} 0^m \\ y \end{bmatrix} - b  + b \\
    &= \left( \begin{bmatrix} x & 0^n \end{bmatrix} + \begin{bmatrix} 0^m & y \end{bmatrix} \right) A \begin{bmatrix} x \\ y \end{bmatrix}  - \begin{bmatrix} x & 0^n \end{bmatrix} A \begin{bmatrix} x \\ 0^n \end{bmatrix} -  \begin{bmatrix} 0^m & y \end{bmatrix} A \begin{bmatrix} 0^m \\ y \end{bmatrix} \\
    &= \begin{bmatrix} x & 0^n \end{bmatrix} A \begin{bmatrix} 0^m \\ y \end{bmatrix} + \begin{bmatrix} 0^m & y \end{bmatrix} A \begin{bmatrix} x \\ 0^n \end{bmatrix} \\
    &= x^T A_{12} y + y^T A_{21} x \\
    &= y^T (A_{12}^T + A_{21}) x.  
\end{align*}
Write $\sigma := 1/(2 P_4(K))$ and $M := A_{12}^T + A_{21}$ for convenience, so that $B(x,y) = Mx \cdot y$.
Plugging in
$$q(x,y) = Mx\cdot y + q(x,0) + q(0,y) - q(0,0)$$
into  Eq.~\eqref{eq:highcorrelation}, we obtain
$$\Big|\Exp_{x\in \F_2^m} \Exp_{y\in \F_2^n} \one_S(x) (-1)^{f(x)\cdot y} (-1)^{Mx\cdot y} (-1)^{q(x,0) + q(0,y) - q(0,0)}\Big| \geq \sigma.$$
By the triangle inequality, we conclude that
\begin{equation}
\label{eq:simplifyingafterbilinear}
\sum_{x \in S} \Big| \Exp_{y \in \mathbb{F}_2^n} (-1)^{f(x)\cdot y} (-1)^{Mx\cdot y} (-1)^{q(0,y)} \Big| \geq \sigma\cdot 2^m,
\end{equation}
where we used the fact that for a fixed $x\in S$, the quantity $(-1)^{q(x,0)-q(0,0)}$ is constant and has absolute value $1$.

Defining the function $h: \F_2^n \rightarrow \{-1,1\}$ by $h(y)=(-1)^{q(0, y)}$, one can rewrite Eq.~\eqref{eq:simplifyingafterbilinear} as
$$\sum_{x \in S} \big| \widehat{h}\big(f(x) + Mx\big) \big| \geq \sigma\cdot 2^m.$$
Since $|\widehat{h}(z)| \leq 1$ for all $z\in \F_2^n$, this implies that there exist at least $(\sigma/2)\cdot 2^m$ many $x \in S$ such that $\big| \widehat{h}\big(f(x) + Mx\big) \big| \geq \sigma/2$.
Let us define the set $T = \big\{z \in \F_2^n:\: |\widehat{h}(z)| \geq \sigma/2\big\}$, so that
$$\big|\big\{x\in S:\: f(x)+Mx \in T\big\}\big| \geq \frac{\sigma 2^m}{2}.$$
Then
$$\frac{\sigma 2^m}{2} \leq \sum_{z\in T} \big|\big\{x\in S:\: f(x)+Mx = z\big\}\big| \leq |T| \cdot \max_{z_0\in T} \big|\big\{x\in S:\: f(x)+Mx = z_0\big\}\big|.$$
Since $h$ is a Boolean function, by Parseval we have that
$$1 = \sum_{z \in \F_2^n} |\widehat{h}(z)|^2 \geq \sum_{z \in T} (\sigma/2)^2 \implies |T| \leq 4/\sigma^2.$$
From these last two inequalities, we conclude there exists some $z_0\in T$ such that
$$\big|\big\{x\in S:\: f(x)+Mx = z_0\big\}\big| \geq \frac{1}{|T|} \frac{\sigma 2^m}{2} \geq \frac{\sigma^3 2^m}{8},$$
which proves the claim.
\end{proof}

It now suffices to find such a vector $z_0\in \F_2^n$ such that  Eq.~\eqref{eq:interim_agreement_funcs} holds.
We do this by sampling $x_1, x_2, \ldots, x_t$ uniformly at random from $\FF_2^m$, passing each point $x_i$ through the indicator function $\one_S$ and then computing the difference $d(x_i) = f(x_i) - (A_{12}^T + A_{21}) x_i$. 
For each $z \in \{d(x_i)\}_{i \in [t]}$, we then estimate $\Pr_{x \in S}\big[f(x) = (A_{12}^T + A_{21})x + z\big]$ and output the value $z^*$ which maximizes the agreement.
To complete the argument, let us now comment on the value of $t$ required to determine a good value of $z^*$.
First, note that  Eq.~\eqref{eq:interim_agreement_funcs} implies
$$\Pr_{x \in \FF_2^m}[d(x) = z_0] \geq \frac{1}{64 P_4(K)^3}.$$
Thus, by sampling $t = O(P_4(K)^3)$ times, we ensure that $v_0 \in \{d(x_i)\}_{i \in [t]}$ with probability at least $0.9$.
Finally, we determine $z^*$ as mentioned before by estimating $\Pr_{x \in S}\big[f(x) = (A_{12}^T + A_{21})x + z\big]$ for each $z \in \{d(x_i)\}_{i \in [t]}$, which can be done up to error $1/(128 P_4(K)^3)$ with probability at least $1-0.1/t$ using an empirical estimator\footnote{In particular, for Boolean functions $f,g$, one can estimate $\Pr_{x}[f(x) = g(x)]$ up to error $\varepsilon$ with probability at least $1-\delta$ using the empirical estimate
$\mathrm{Est}_m := \frac{1}{m} \sum_{j =1}^m f(x_j) g(x_j)$
which can be computed by querying $f,g$ at uniformly random  $x_1,\ldots,x_m \in \FF_2^n$ and for $m = \poly(1/\varepsilon \log(1/\delta))$.}
that uses $O(\log(K) P_4(K)^3)$ samples from $\FF_2^n$ and queries to $S$ and $f$ for each $i \in [t]$.
In total, this procedure consumes $O(\log(K) P_4(K)^{6})$ queries to $S$ and to $f$, and succeeds with probability at least $0.8$ (after taking the union bound).

We then return $M = A_{12}^T + A_{21}$ and $v = z^*$ as given above.
With probability at least $0.7$, the guarantee of the statement is satisfied with $P_4'(K) = 128 P_4(K)^3$.
The overall query and time complexities of the algorithm are mostly due to the use of Theorem~\ref{thm:quadratic_GL}, and match the complexities stated in the lemma.
\end{proof}

\section{Proof of the algorithmic $\PFR$ theorems}\label{sec:algo_PFR}
In this section, we provide the proof of our main results, relying on the algorithmic lemmas established in Section~\ref{sec:lemmas}. We begin with classical algorithms in Section~\ref{sec:classical} and then proceed to quantum algorithms in Section~\ref{sec:quantum}.

\subsection{Classical algorithmic $\PFR$ theorems}
\label{sec:classical}
We first prove our main classical results, which we restate more precisely below.

\begin{theorem}[Algorithmic $\PFR$]
\label{thm:algoPFR1}
    Suppose $A \subseteq \mathbb{F}_2^n$ satisfies $|A + A| \leq K|A|$.
    There is a randomized algorithm that takes $O(\log|A| + K)$ random samples from $A$, makes $2^{O(K)} \log^2 |A|\cdot \log\log |A|$ queries to $A$, runs in time $K^{O(\log K)} n^4 \log n$ and has the following guarantee:
    with probability at least $2/3$, it outputs a basis for a subspace $V \leq \mathbb{F}_2^n$ of size $|V| \leq |A|$ such that $A$ can be covered by $P_1'(K)$ translates of $V$.
\end{theorem}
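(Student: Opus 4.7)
The plan is to chain the three algorithmic lemmas of Section~\ref{sec:lemmas}, following the high-level sketch in the technical overview.

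\textbf{Step 1 (Localization).} I would first draw $O(\log|A|+K)$ uniformly random samples from $A$ and let $U\leq \F_2^n$ be their linear span. Since $\log|\vspan(A)|\leq 2K+\log|A|$ by Theorem~\ref{thm:spanAsize}, Lemma~\ref{lem:spansample} with $\varepsilon=1/2$ ensures that the localization $A':=A\cap U$ satisfies $|A'|\geq |A|/2$ with probability at least $0.95$; in particular $|A'+A'|\leq 2K|A'|$. Setting $m:=\lceil\log|4A'|\rceil+O(1)$, Pl\"unnecke's inequality (Lemma~\ref{lem:4Asize}) gives $m\leq \log|A|+4\log K+O(1)$. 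I then sample a uniformly random linear map $\pi\colon \F_2^n\to \F_2^m$: by Lemma~\ref{lem:randomFreiman}, with probability $\geq 0.95$ the map $\pi$ is a Freiman isomorphism from $A'$ onto $S:=\pi(A')$, and by construction $|S|=|A'|\geq 2^m/(2^{10}K^4)$.

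\textbf{Step 2 (Affine structure in the dense model).} Let $f\colon S\to A'$ be the inverse of $\pi|_{A'}$. I would spend $O(n^3)$ time once to compute a basis of $U$ together with a reduced form of the matrix of $\pi|_U$; this lets me answer, in $O(n^2)$ arithmetic plus a single query to $A$, every query ``is $y\in S$, and if so what is $f(y)$?''. Because $\pi$ is a Freiman isomorphism, the number of quadruples $(x_1,x_2,x_3,x_4)\in S^4$ with $x_1+x_2=x_3+x_4$ (for which $f(x_1)+f(x_2)=f(x_3)+f(x_4)$ automatically) equals $E(A')\geq |A'|^4/|A'+A'|\geq |A'|^3/(2K)$, and combining with the density lower bound on $S$ this is at least $2^{3m}/K''$ for some $K''=\mathrm{poly}(K)$. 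Applying Lemma~\ref{lem:findingaffine} then produces $M\in \F_2^{n\times m}$ and $v\in \F_2^n$ such that $f(x)=Mx+v$ on a subset $X\subseteq S$ of size at least $2^m/P_4'(K'')$.

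\textbf{Step 3 (Extracting the cover).} Let $V:=\{Mx:x\in \F_2^m\}\leq \F_2^n$. Then $|A\cap(v+V)|\geq |f(X)|=|X|\geq |A|/P'(K)$ for a polynomial $P'$. Writing $B:=A\cap(v+V)$, we have $|A+B|\leq K|A|\leq KP'(K)|B|$, so Ruzsa's covering lemma (Lemma~\ref{lem:ruzsacovering}) yields $X_0\subseteq A$ with $|X_0|\leq KP'(K)$ and $A\subseteq X_0+(B+B)\subseteq X_0+V$. Since we only know $|V|\leq 2^m=O(K^4|A|)$, I would pass to any maximal subspace $V'\leq V$ with $|V'|\leq |A|$; the ambient $V$ is then the union of $O(K^4)$ cosets of $V'$, giving a cover of $A$ by $P_1'(K):=O(K^5 P'(K))$ translates of $V'$, and a basis of $V'$ is output. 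Aggregating the $O(1)$ failure probabilities of the three steps and boosting by an $O(1)$ number of independent repetitions gives overall success probability $\geq 2/3$.

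\textbf{Main obstacle.} The bookkeeping in Step~2 is where I expect the difficulty to lie: I need to check that the $E(A')$ lower bound, combined with the $2^{10}K^4$ density loss incurred when embedding $A'$ into $\F_2^m$, still produces a quadruple count exceeding $2^{3m}/\mathrm{poly}(K)$, so that Lemma~\ref{lem:findingaffine} is triggered with a polynomial parameter; and on the algorithmic side, the $O(n^3)$ preprocessing plus $O(n^2)$-per-query inversion of $\pi|_U$ must be implemented so that each of the queries to $g(x,y)=\one_S(x)(-1)^{f(x)\cdot y}$ is serviced by $O(1)$ queries to $A$. Together these give the claimed $\tilde{O}(n^4)$ runtime, while substituting $m=O(\log|A|+\log K)$ into the query complexity of Lemma~\ref{lem:findingaffine} and noting that only the $x$-coordinate of $g$ needs to be queried (since $g$ depends linearly on $y$) yields the stated $2^{O(K)}\log^2|A|\log\log|A|$ query count to $A$.
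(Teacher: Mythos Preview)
Your overall plan matches the paper's proof almost exactly: localize via Lemma~\ref{lem:spansample}, take a random linear projection via Lemma~\ref{lem:randomFreiman}, apply Lemma~\ref{lem:findingaffine} to the inverse map, and finish with Ruzsa's covering lemma and a pass to a subspace of size at most $|A|$. The combinatorial bookkeeping you flag (the additive energy of $A'$ and the density loss from the projection) is indeed routine and handled exactly as you sketch.

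The genuine gap is in your query-simulation claim. You assert that, after preprocessing, each query ``is $y\in S$, and if so what is $f(y)$?'' can be answered with \emph{a single} query to $A$. This is false: while $\pi|_{A'}$ is injective (being a Freiman isomorphism), the map $\pi|_U$ is not. Given $y\in\F_2^m$, the preimage $\pi|_U^{-1}(y)$ is a coset of $\ker(\pi|_U)$, and $|\ker(\pi|_U)| = |U|/|\pi(U)|$ can be as large as $|\vspan(A)|/|S| \leq 2^{2K+1}$ by Theorem~\ref{thm:spanAsize}. To decide whether $y\in S=\pi(A')$ you must test each element of this coset for membership in $A$, costing up to $2^{2K}$ queries; this enumeration is precisely the source of the $2^{O(K)}$ factor in the stated query bound, not the $K^{O(\log K)}$ from Lemma~\ref{lem:findingaffine}.

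Relatedly, your device of ``only the $x$-coordinate of $g$ needs to be queried'' does not by itself reduce the $(m+n)^2$ scaling in Lemma~\ref{lem:findingaffine}: the black-box quadratic Goldreich--Levin call still makes that many queries to $g$, each translating to one query to $S$ and $f$. The paper eliminates the ambient $n$ instead by representing $f$ in $U$-coordinates, so that the ``$n$'' in Lemma~\ref{lem:findingaffine} is $\dim U = O(\log|A|+K)$ rather than the ambient dimension. Since you already compute a basis of $U$, you can do the same; once both fixes are made, the remainder of your analysis goes through.
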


\begin{proof}
We first describe the algorithm to find $V$: 
\begin{enumerate}
    \item Sample $t = 28 \log|A| + 56K$ many uniformly random elements from $A$, and denote their linear span by $U$.
    Let $A' := A\cap U$.
    \item Take a random linear map $\pi: U \to \mathbb{F}_2^m$ where $m = \log|A| + 4\log K + 10$. Let $S = \pi(A')$ denote the image of $A'$ under $\pi$, and let $f: S \to U$ be the inverse of $\pi$ when restricted to $S$.\footnote{In our analysis we show that this inverse is well-defined with high probability.}
    \item Apply Lemma~\ref{lem:findingaffine} to obtain an affine-linear map $\psi: \mathbb{F}_2^m \to U$ such that $f(x) = \psi(x)$ for at least $|A|/P_4'\big(2^{33}K^{13}\big)$ values $x \in S$.
    \item Take a subspace $V$ of $\textsf{Im}(\psi)$ having size at most $|A|$, and output a basis for $V$.
\end{enumerate}

We now analyze the correctness and complexity of this algorithm.
For Step $(1)$, note that Theorem~\ref{thm:spanAsize} directly implies that $|\vspan(A)|\leq 2^{2K}\cdot |A|$.
Now, by our choice of $t$, Lemma~\ref{lem:spansample} implies that  $|A'| \geq |A|/2$ with probability at least $0.99$.
Supposing this is the case, we have that
$$
|A' + A'|\leq |A+A|\leq  K|A| \leq 2K |A'|.
$$
Moreover, by Lemma~\ref{lem:4Asize} we conclude that  $|4A'| \leq |4A| \leq K^4 |A| \leq 2K^4 |A'|$.

For Step $(2)$, note that Lemma~\ref{lem:randomFreiman} shows that (with probability at least $0.99$) $\pi$ is a Freiman isomorphism from $A'$ to $S = \pi(A')$.
In this case, the inverse map $f: S\to A'$ is a Freiman isomorphism and $|S| = |A'|$.\footnote{We remark that the definition of Freiman isomorphism gives these two properties immediately.}

In Step $(3)$ we wish to apply Lemma~\ref{lem:findingaffine}, which requires us to bound from below the quantity
$$\big|\big\{(x_1, x_2, x_3, x_4)\in S^4:\: x_1+x_2 = x_3+x_4 \,\text{ and }\, f(x_1)+f(x_2) = f(x_3)+f(x_4)\big\}\big|.$$
We claim that this is at least $|A'|^3/(2K)$:

\begin{claim}
    If $f: S \to A'$ is a Freiman isomorphism and $|A'+A'| \leq 2K |A'|$, then
    $$\big|\big\{(x_1, x_2, x_3, x_4)\in S^4:\: x_1+x_2 = x_3+x_4 \,\text{ and }\, f(x_1)+f(x_2) = f(x_3)+f(x_4)\big\}\big| \geq \frac{|A'|^3}{2K}.$$
\end{claim}

\begin{proof}
If $f$ is a Freiman isomorphism, then the quantity above equals
\begin{align*}
    &\big|\big\{(x_1, x_2, x_3, x_4)\in S^4:\: f(x_1)+ f(x_2) = f(x_3)+f(x_4)\big\}\big| \\
    &= \big|\big\{(y_1, y_2, y_3, y_4)\in (A')^4:\: y_1+y_2 = y_3+y_4\big\}\big| \\
    &= E(A').
\end{align*}
Now note that
$$\sum_{z\in 2A'} \big|\big\{(y_1, y_2)\in (A')^2:\: y_1+y_2 = z\big\}\big| = |A'|^2$$
and
\begin{align*}
    &\sum_{z\in 2A'} \big|\big\{(y_1, y_2)\in (A')^2:\: y_1+y_2 = z\big\}\big|^2 \\
    &= \sum_{z\in 2A'} \big|\big\{(y_1, y_2, y_3, y_4)\in (A')^4:\: y_1+y_2 = z = y_1+y_2\big\}\big|^2 \\
    &= E(A'),
\end{align*}
so by Cauchy-Schwarz
$$|A'|^2 \leq |2A'|^{1/2} E(A')^{1/2} \implies E(A') \geq |A'|^4/|2A'|.$$
The claim now follows from the assumption $|2A'| \leq 2K |A'|$.
\end{proof}

Next we note that, by assumption and by our choice for $m$, we have that
$$|A'| \geq \frac{|A|}{2} \geq \frac{2^m}{2^{11}K^4}.$$
From the claim, we then conclude that $S$ and $f$ satisfy the hypothesis of Lemma~\ref{lem:findingaffine} with $K$ substituted by $K' := 2^{33}K^{13}$.
We can then obtain an affine-linear function $\psi: \FF_2^m \to U$ such that, with probability at least $0.7$,
\begin{align}
    \label{eq:promiseofpsi=phi}
    \big|\big\{x\in S:\: f(x) = \psi(x)\big\}\big| \geq \frac{2^m}{P_4'\big(2^{33}K^{13}\big)}.
\end{align}
It remains to argue how one can simulate queries to $S$ and $f$, as required by the statement of Lemma~\ref{lem:findingaffine}.
To this end, observe that we have a full description of the linear map~$\pi: U\to \FF_2^m$, so in time $O(m^2 n)$ we can find its kernel $\ker(\pi)= \{v\in U:\: \pi(v)=0\}$.
We first make three observations about this:
$(a)$ $\ker(\pi)$ is a subspace of size
$$\frac{|U|}{|\mathrm{Im}(\pi)|} \leq \frac{|\vspan(A)|}{|S|} \leq \frac{2|\vspan(A)|}{|A|} \leq 2^{2K},$$
where we used Theorem~\ref{thm:spanAsize} in the final inequality;
$(b)$ for every $x\in \mathrm{Im}(\pi)$, we have that $\pi^{-1}(x)$ is a translate of $\ker(\pi)$;
$(c)$ in $O(m^2 n)$ time we can find the inverse map $\pi^{-1}: \mathrm{Im}(\pi) \to U/\ker(\pi)$.
Using item $(b)$, we can check whether $x\in S$ (i.e., $\pi^{-1}(x) \cap A \neq \emptyset$) by enumerating over all $y\in \pi^{-1}(x)$ and checking if $y\in A$ or not.
By item $(a)$, this takes at most $2^{2K}$ queries to $A$.
Hence, after computing $\ker(\pi)$ and $\pi^{-1}$, one can make one query to $S$ and to $f$ using $2^{2K}$ queries to $A$ and $O(mn)$ time.

Now define the affine subspace $V' = \mathrm{Im}(\psi)$.
By definition we have that  $|V'| \leq 2^m = O(K^4)|A'|$, and by  Eq.~\eqref{eq:promiseofpsi=phi} we have
$$|A\cap V'| = |\mathrm{Im}(f) \cap \mathrm{Im}(\psi)| \geq |S| \Pr_{x\in S}[f(x) = \psi(x)]\geq \frac{2^m}{P_4'\big(2^{33}K^{13}\big)}.$$
It follows that
$$|A + (A\cap V')| \leq |A+A| \leq K |A| \leq 2^m \leq P_4'\big(2^{33}K^{13}\big) |A\cap V'|.$$
Applying Ruzsa's covering lemma (Lemma~\ref{lem:ruzsacovering}), we obtain that $A$ can be covered by $P_4'\big(2^{33}K^{13}\big)$ translates of $2(A\cap V') \subseteq V' + V' =\psi(0)+V'$.

Overall, the complexity of the algorithm is as follows: we need $O(K+\log |A|)$ samples from $A$, and the number of queries to $A$ is as given by Lemma~\ref{lem:findingaffine}, i.e.,
$$2^{2K} \cdot K^{O(\log K)} (m+\log|U|)^2 \log (m+\log|U|) = 2^{O(K)} (\log |A|)^2 \log \log |A|,$$
where we used that $m = \log|A| + O(\log K)$ and $\log|U| = O(\log|A| + K)$.
The total runtime is the cost of Lemma~\ref{lem:findingaffine}, the cost of inverting $\pi$, and the cost for making the queries to $f$ and $S$, i.e.,
$$
K^{O(\log K)} (m+n)^3 \log (m+n) + O(m^2 n) + K^{O(\log K)} (m+n)^2 \log (m+n) \cdot O(mn) = K^{O(\log K)} n^4 \log n,
$$
proving the theorem statement.
\end{proof}

\begin{theorem}[Homomorphism testing]
\label{thm:algoPFR2}
    Suppose $f: \F_2^m \to \F_2^n$ satisfies
    $$\Pr_{x_1+x_2 = x_3+x_4} \big[f(x_1)+f(x_2) = f(x_3)+f(x_4)\big] \geq 1/K.$$
    There is a randomized algorithm that makes $K^{O(\log K)} (m+n)^2 \log (m+n)$ queries to $f$, runs in $K^{O(\log K)} (m+n)^3 \log (m+n)$ time and, with probability at least $2/3$, outputs a matrix $M \in \F_2^{n\times m}$ and a vector $v\in \F_2^n$ such that
    $$\Pr_{x\in \F_2^m} \big[f(x) = Mx + v\big] \geq 1/P_2'(K).$$
\end{theorem}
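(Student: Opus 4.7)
The plan is to derive this theorem as a direct consequence of Lemma~\ref{lem:findingaffine}, specializing its setup to $S = \F_2^m$. First I would observe that the total number of additive quadruples in $(\F_2^m)^4$ equals exactly $2^{3m}$, since choosing $x_1, x_2, x_3$ freely forces $x_4 = x_1 + x_2 + x_3$. Therefore the hypothesis
$$\Pr_{x_1+x_2 = x_3+x_4} \bigl[f(x_1)+f(x_2) = f(x_3)+f(x_4)\bigr] \geq 1/K$$
translates immediately into
$$\bigl|\bigl\{(x_1,x_2,x_3,x_4) \in (\F_2^m)^4 : x_1+x_2 = x_3+x_4 \text{ and } f(x_1)+f(x_2) = f(x_3)+f(x_4) \bigr\}\bigr| \geq 2^{3m}/K,$$
which is exactly the hypothesis of Lemma~\ref{lem:findingaffine} with $S := \F_2^m$.

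Next I would invoke Lemma~\ref{lem:findingaffine} with this choice of $S$ and with the given $f : \F_2^m \to \F_2^n$. Since $S = \F_2^m$, queries to the indicator $\one_S$ are free (it is the constant $1$ function), so each query required by the lemma reduces to a single query to $f$. The lemma then outputs, with probability at least $0.7$, a matrix $M \in \F_2^{n\times m}$ and a vector $v \in \F_2^n$ such that
$$\bigl|\bigl\{x \in \F_2^m : f(x) = Mx + v\bigr\}\bigr| \geq 2^m/P_4'(K),$$
which is equivalent to $\Pr_{x \in \F_2^m}[f(x) = Mx+v] \geq 1/P_4'(K)$. Setting $P_2'(K) := P_4'(K)$ gives the claimed conclusion, and by standard amplification we can boost the success probability from $0.7$ to $2/3$ (or higher). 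The query and runtime complexities are inherited verbatim from Lemma~\ref{lem:findingaffine}, namely $K^{O(\log K)}(m+n)^2 \log(m+n)$ queries and $K^{O(\log K)}(m+n)^3 \log(m+n)$ time.

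There is no real obstacle here; the only subtlety worth spelling out is the reduction of the probability hypothesis to the additive-energy-like counting hypothesis used by Lemma~\ref{lem:findingaffine}, and the observation that queries to the trivial set $S = \F_2^m$ add no overhead. The theorem is essentially a clean corollary of the already-established algorithmic restricted homomorphism lemma.
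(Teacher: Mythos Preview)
Your proposal is correct and matches the paper's proof exactly: the paper simply states that the theorem follows immediately from Lemma~\ref{lem:findingaffine} with $S = \F_2^m$ and $P_2'(K) = P_4'(K)$. One tiny remark: no amplification is needed, since the $0.7$ success probability from the lemma already exceeds $2/3$.
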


\begin{proof}
This follows immediately from Lemma~\ref{lem:findingaffine} with $S = \F_2^m$ and $P_2'(K) = P_4'(K)$.
\end{proof}

\begin{theorem}[Structured approximate homomorphism]
\label{thm:algoPFR3}
    Suppose $f: \F_2^m \to \F_2^n$ satisfies
    $$\big|\big\{f(x)+f(y)-f(x+y):\: x, y\in \F_2^m \big\}\big| \leq K.$$
    There is a randomized algorithm that makes $K^{O(\log K)} (m+n)^2 \log (m+n)$ queries to $f$, runs in $K^{O(\log K)} (m+n)^3 \log (m+n)$ time and, with probability at least $2/3$, outputs a matrix $M \in \F_2^{n\times m}$ such that
    $$|\{f(x) - Mx:\: x\in \F_2^m\}| \leq P_3'(K).$$
\end{theorem}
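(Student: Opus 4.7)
The plan is to reduce to the algorithmic restricted homomorphism Lemma~\ref{lem:findingaffine} applied with $S = \F_2^m$, and then to upgrade the resulting ``agreement on a large set'' conclusion to a bound on the image of $x\mapsto f(x)-Mx$, using Ruzsa's covering lemma together with the small-defect hypothesis.

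First, I would verify that $f$ satisfies the hypothesis of Lemma~\ref{lem:findingaffine} with parameter $K$. Let $D:=\{f(x)+f(y)+f(x+y):\: x,y\in \F_2^m\}$, which has $|D|\leq K$ by hypothesis (in characteristic $2$, $-$ and $+$ coincide). For any additive quadruple $(x_1,x_2,x_3,x_4)$ with common sum $s=x_1+x_2=x_3+x_4$, the condition $f(x_1)+f(x_2)=f(x_3)+f(x_4)$ is equivalent to $\phi_s(x_1)=\phi_s(x_3)$, where $\phi_s(x):=f(x)+f(s+x)+f(s)\in D$. For each fixed $s$, Cauchy-Schwarz gives $|\{(x_1,x_3)\in (\F_2^m)^2:\: \phi_s(x_1)=\phi_s(x_3)\}|\geq 2^{2m}/|D|\geq 2^{2m}/K$; summing over $s\in\F_2^m$ yields the bound $2^{3m}/K$ required by Lemma~\ref{lem:findingaffine}. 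Applying that lemma then produces, with probability at least $0.7$ and within the stated time and query complexities, a matrix $M\in\F_2^{n\times m}$ and a vector $v\in\F_2^n$ such that $E:=\{x\in\F_2^m:\: f(x)=Mx+v\}$ satisfies $|E|\geq 2^m/P_4'(K)$.

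It remains to show that $h(x):=f(x)-Mx$ has image of size $\mathrm{poly}(K)$. The linear parts cancel in the defect, so $h(x)+h(y)+h(x+y)=f(x)+f(y)+f(x+y)\in D$ for all $x,y$, and $h(x)=v$ for every $x\in E$. Since $|\F_2^m+E|=2^m\leq P_4'(K)\cdot|E|$, Ruzsa's covering lemma (Lemma~\ref{lem:ruzsacovering}) yields a set $X\subseteq \F_2^m$ with $|X|\leq P_4'(K)$ and $\F_2^m\subseteq X+E+E$. For any $w\in\F_2^m$, write $w=a+e_1+e_2$ with $a\in X$ and $e_1,e_2\in E$. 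Using the relation $h(x+y)=h(x)+h(y)+d$ for some $d\in D$ twice gives $h(e_1+e_2)=2v+d_1=d_1$ (since $2v=0$ in characteristic $2$) and then $h(w)=h(a)+h(e_1+e_2)+d_2=h(a)+d_1+d_2$, for some $d_1,d_2\in D$. Hence $\mathrm{Im}(h)\subseteq h(X)+D+D$, of size at most $|X|\cdot|D|^2\leq P_4'(K)K^2$. The algorithm returns $M$, and the theorem follows with $P_3'(K):=P_4'(K)K^2$.

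The only substantive algorithmic work is the invocation of Lemma~\ref{lem:findingaffine}, which yields the claimed query and time complexities. The conceptual step that goes beyond Theorem~\ref{thm:algoPFR2} is the combinatorial closing argument: Lemma~\ref{lem:findingaffine} alone only guarantees agreement with an affine map on the dense set $E$, with no \emph{a priori} control over $h$ outside $E$; combining this with the small-defect hypothesis via Ruzsa covering upgrades the conclusion to a uniform bound on $|\mathrm{Im}(h)|$ independent of $m$ and $n$.
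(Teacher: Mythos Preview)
Your proposal is correct and follows essentially the same route as the paper: verify the quadruple hypothesis of Lemma~\ref{lem:findingaffine} with parameter $K$, apply that lemma with $S=\F_2^m$ to obtain $M,v$ and the dense agreement set $E$, then use Ruzsa covering (Lemma~\ref{lem:ruzsacovering}) together with the defect set $D$ to bound $|\mathrm{Im}(f-Mx)|$ by $K^2 P_4'(K)$. Your verification of the hypothesis---fixing the common sum $s$ and counting collisions of $\phi_s$ in $D$---is arguably cleaner than the paper's double Cauchy--Schwarz over $\Delta f$, but both give the same $2^{3m}/K$ bound; the closing combinatorial argument is identical up to notation (you work with $h=f-Mx$, the paper unwinds $f(x+y+z)$ directly).
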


\begin{proof}
We first show that the property in the statement implies that
\begin{equation} \label{eq:propPFR2}
    \Pr_{x_1+x_2 = x_3+x_4} \big[f(x_1)+f(x_2) = f(x_3)+f(x_4)\big] \geq \frac{1}{K}.
\end{equation}
Indeed, denote $\Delta f := \big\{f(x)+f(y)-f(x+y):\: x, y\in \F_2^m \big\}$, so that $|\Delta f| \leq K$ by assumption.
Then
\begin{align*}
    \Exp_{b\in \Delta f} \Exp_{x\in \F_2^m} \Exp_{y\in \F_2^m} \one \big[f(x)+f(y)-f(x+y) = b\big] &= \frac{1}{|\Delta f|} \Exp_{x, y\in \F_2^m} \sum_{b\in \Delta f} \one \big[f(x)+f(y)-f(x+y) = b\big] \\
    &= \frac{1}{|\Delta f|} \Exp_{x, y\in \F_2^m} 1 \\
    &\geq \frac{1}{K},
\end{align*}
and so by Cauchy-Schwarz
\begin{align*}
    \frac{1}{K^2}
    &\leq \Exp_{b\in \Delta f} \Exp_{x\in \F_2^m} \Big(\Exp_{y\in \F_2^m} \one \big[f(x)+f(y)-f(x+y) = b\big]\Big)^2 \\
    &= \Exp_{b\in \Delta f} \Exp_{x\in \F_2^m} \Exp_{y, z\in \F_2^m} \one \big[f(x)+f(y)-f(x+y) = b = f(x)+f(z)-f(x+z)\big] \\
    &= \frac{1}{K} \Exp_{x, y, z\in \F_2^m} \sum_{b\in \Delta f} \one \big[f(y)-f(x+y) = b-f(x) = f(z)-f(x+z)\big] \\
    &= \frac{1}{K} \Exp_{x, y, z\in \F_2^m} \one \big[f(y)-f(x+y) = f(z)-f(x+z)\big] \\
    &= \frac{1}{K} \Exp_{x_1+x_2 = x_3+x_4} \one \big[f(x_1)+f(x_2) = f(x_3)+f(x_4)\big],
\end{align*}
which gives inequality~\eqref{eq:propPFR2} as desired.

We may then apply Lemma~\ref{lem:findingaffine} (with $S = \F_2^m$) to obtain a matrix $M \in \F_2^{n\times m}$ and a vector $v\in \F_2^n$ such that, with probability at least $0.7$, we have
\begin{equation}
\label{eq:conclPFR2}
    \Pr_{x\in \F_2^m} \big[f(x) = Mx + v\big] \geq 1/P_4'(K).
\end{equation}
We claim that, if this inequality holds (and $|\Delta f| \leq K$), then
\begin{equation}
\label{eq:conclPFR3}
    |\{f(x) - Mx:\: x\in \F_2^m\}| \leq K^2 P_4'(K),
\end{equation}
which is the property we want with $P_3'(K) = K^2 P_4'(K)$.
It then suffices to prove~\eqref{eq:conclPFR3}.

Denote $E := \big\{x\in \F_2^m:\: f(x) = Mx + v\big\}$, so that $|E| \geq 2^m/P_4'(K)$ by  Eq.~\eqref{eq:conclPFR2}.
Then
$$|\F_2^m + E| = 2^m \leq P_4'(K)\cdot |E|,$$
so we may use Ruzsa's covering lemma (Lemma~\ref{lem:ruzsacovering}, with $S = E$ and $T = \F_2^m$) to conclude there exists a set $X\subseteq \F_2^m$ of size $P_4'(K)$ such that $\F_2^m \subseteq X + 2E$.
In other words, every element of $\F_2^m$ can be written as $x+y+z$ with $x\in X$ (where $|X| \leq P_4'(K)$) and $y, z\in E$.

Now, for every $x\in X$, $y, z\in E$, by definition of the set $\Delta f$ there exist $b, b'\in \Delta f$ such that
$$f(x+y)-f(x)-f(y) = b \quad \text{and} \quad f(x+y+z)-f(x+y)-f(z) = b'.$$
Summing these two identities, we conclude that
\begin{align*}
    f(x+y+z)
    &= f(x) + f(y) + f(z) + b+b' \\
    &= f(x) + My + Mz + b+b' \\
    &= f(x) + M(x+y+z) - Mx + b+b',
\end{align*}
and so
$$f(x+y+z) - M(x+y+z) = f(x) - Mx + b+b' \in \big\{f(x') - Mx':\: x'\in X\big\} + \Delta f+\Delta f$$
is contained in a set of size at most $|X|\cdot |\Delta f|^2 \leq K^2 P_4'(K)$.
This gives  Eq.~\eqref{eq:conclPFR3} and concludes the proof of the theorem.
\end{proof}

\subsection{Quantum algorithmic $\PFR$ theorem}
\label{sec:quantum}
We now give a quantum algorithm that is quadratically better than in the query complexity compared to the classical algorithm shown in the section above. We restate the statement of the quantum result in more detail below.

\begin{theorem}[Quantum algorithmic $\PFR$]
\label{thm:algoquantumPFR1}
    Suppose $A \subseteq \mathbb{F}_2^n$ satisfies $|A + A| \leq K|A|$.
    There is a quantum algorithm that takes $O(\log|A| + K)$ random samples from $A$, makes $2^{O(K)} \log |A|$ quantum queries to $A$, runs in time $K^{O(\log K)} n^3$ and has the following guarantee:
    with probability at least $2/3$, it outputs a basis for a subspace $V \leq \mathbb{F}_2^n$ of size $|V| \leq |A|$ such that $A$ can be covered by $P_1'(K)$ translates of $V$.
\end{theorem}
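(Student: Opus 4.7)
The plan is to mirror the four-step structure of the proof of Theorem~\ref{thm:algoPFR1}, replacing the application of the classical quadratic Goldreich-Levin theorem (Theorem~\ref{thm:quadratic_GL}) inside Lemma~\ref{lem:findingaffine} by the quantum agnostic stabilizer learning algorithm of Chen--Gong--Ye--Zhang (Theorem~\ref{thm:sitanbootstrapping}). Steps 1, 2 and 4 of the classical algorithm are unchanged: using $O(\log|A|+K)$ classical random samples from $A$ one builds $U := \vspan(v_1,\dots,v_t)$ and sets $A' = A\cap U$ so that $|A'|\geq |A|/2$ and $|U|\leq 2^{2K}|A|$ (by Theorem~\ref{thm:spanAsize} and Lemma~\ref{lem:spansample}); a uniformly random linear map $\pi: U \to \F_2^m$, $m = \log|A| + O(\log K)$, is a Freiman isomorphism from $A'$ onto $S := \pi(A')$ with probability $\geq 0.99$ (Lemma~\ref{lem:randomFreiman}); and the final step outputs a subspace of the image of the affine-linear map obtained in Step~3 via Ruzsa's covering lemma (Lemma~\ref{lem:ruzsacovering}), exactly as in Theorem~\ref{thm:algoPFR1}.

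The new ingredient is a quantum version of Lemma~\ref{lem:findingaffine}. Set $g(x,y) = \one_S(x)(-1)^{f(x)\cdot y}$ as before. Instead of querying $g$ point-wise, I prepare copies of the state
$$|\psi_g\rangle \;=\; \frac{1}{\sqrt{|S|}}\sum_{x\in S}|x\rangle\otimes H^{\otimes n}|f(x)\rangle \;=\; \frac{1}{\sqrt{|S|\cdot 2^n}}\sum_{x\in S,\, y\in\F_2^n}(-1)^{f(x)\cdot y}|x,y\rangle,$$
whose amplitudes are proportional to $g(x,y)$. To do this, first prepare the uniform superposition $\frac{1}{\sqrt{|U|}}\sum_{u\in U}|u\rangle$ (using a basis for $U$), coherently compute $\pi(u)$ and $\one_A(u)$ with one quantum query to $A$, and run fixed-point amplitude amplification on the indicator $\one_A$; since $|A'|/|U|\geq 2^{-2K-1}$, this costs $O(2^K)$ quantum queries to $A$ and gives $\frac{1}{\sqrt{|A'|}}\sum_{u\in A'}|\pi(u)\rangle|u\rangle$, after which $H^{\otimes n}$ on the second register yields $|\psi_g\rangle$. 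Pre-processing $\pi,\pi^{-1}$ by Gaussian elimination in $O(n^3)$ time, plus $O(n^2)$ time per coherent query to $\pi$, gives the quantum analogue of the ``query-simulation'' step in the proof of Theorem~\ref{thm:algoPFR1}.

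Next, I feed copies of $|\psi_g\rangle$ into the stabilizer learning algorithm of Theorem~\ref{thm:sitanbootstrapping}. By Claim~\ref{claim:highu3} applied to $S$ and $f$, there is a quadratic $p$ such that $|\mathbb{E}_{x,y}g(x,y)(-1)^{p(x,y)}|\geq 1/P_4(K)$; rescaling by $\sqrt{2^{m+n}/(|S|\cdot 2^n)}=\sqrt{2^m/|S|}\leq \mathrm{poly}(K)$ (since $|S|=|A'|\geq 2^m/2^{O(\log K)}$), this translates to $|\langle \psi_g|\phi_p\rangle|\geq 1/\mathrm{poly}(K)$, where $|\phi_p\rangle = 2^{-(m+n)/2}\sum_{z}(-1)^{p(z)}|z\rangle$ is the stabilizer state corresponding to $p$ (Theorem~\ref{thm:neststabilizer}). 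Taking $\tau = 1/\mathrm{poly}(K)$, $\varepsilon = \tau/2$, Theorem~\ref{thm:sitanbootstrapping} returns an explicit description of a stabilizer state $|\phi\rangle$ with $|\langle\psi_g|\phi\rangle|\geq\tau/2$, using $(m+\log|U|)\cdot K^{O(\log K)} = \log|A|\cdot K^{O(\log K)}$ copies of $|\psi_g\rangle$ and time $K^{O(\log K)}(m+\log|U|)^3 = K^{O(\log K)}\log^3|A|$. From the description of $|\phi\rangle$ I extract a quadratic polynomial $q$ satisfying $|\mathbb{E}_{x,y}g(x,y)(-1)^{q(x,y)}|\geq 1/(2P_4(K))$ by reading off its phase exponent; then the bilinear-form analysis in the second half of the proof of Lemma~\ref{lem:findingaffine} (passing to $B(x,y)=q(x,y)-q(x,0)-q(0,y)+q(0,0) = y^T M x$, Parseval on $h(y)=(-1)^{q(0,y)}$, and Ruzsa's covering bound) yields the matrix $M$ and, after classically sampling $O(\mathrm{poly}(K))$ points to try candidate offsets, the vector $v$.

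The main obstacle will be the last paragraph above: the output of Theorem~\ref{thm:sitanbootstrapping} is a stabilizer state of the general form $|A|^{-1/2}\sum_{z\in \mathcal{A}}i^{\ell(z)}(-1)^{q(z)}|z\rangle$ with an affine-subspace support $\mathcal{A}\subseteq \F_2^{m+n}$ and a linear ``$i$-exponent'' $\ell$, not necessarily a pure quadratic phase state. Reducing this output to a quadratic polynomial with high correlation with $g$ requires showing that the maximal overlap of $|\psi_g\rangle$ with a general stabilizer state is comparable (up to $\mathrm{poly}(K)$) to its maximal overlap with a full-support real quadratic phase state; this follows from the Arunachalam--Dutt characterization of $\|\cdot\|_{U^3}$-extremality of stabilizer states combined with polynomial Gowers inversion, or alternatively by a direct manipulation: any stabilizer state with large overlap with $|\psi_g\rangle$ can be converted, by absorbing $\one_{\mathcal{A}}$ and $i^{\ell(z)}$ into a redefinition of the Boolean function, into a quadratic phase correlation. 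Everything else follows by bookkeeping: $O(\log|A|+K)$ classical samples, $2^{O(K)}\log|A|$ quantum queries to $A$ (namely $O(2^K)$ per copy of $|\psi_g\rangle$, times $K^{O(\log K)}\log|A|$ copies), and total time $K^{O(\log K)} n^3$ dominated by the $O(n^3)$ pre-processing of $\pi$, by the stabilizer learning runtime, and by Clifford synthesis (Lemma~\ref{lem:clifford_synthesis}) for verification SWAP tests (Lemma~\ref{lem:swap_test}).
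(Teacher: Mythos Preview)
Your outline matches the paper's proof almost exactly: Steps 1, 2, 4 are copied verbatim from Theorem~\ref{thm:algoPFR1}, and Step 3 is replaced by a quantum analogue of Lemma~\ref{lem:findingaffine} (the paper's Lemma~\ref{lem:findingfreimanquantum}) that prepares $|\psi_g\rangle$ and feeds it to Theorem~\ref{thm:sitanbootstrapping}. Your state preparation via amplitude amplification on $U$ is a harmless variant of the paper's Claim~\ref{claim:prep_psi}, which instead post-selects on a measurement of $\one_S$ and succeeds after $O(K\log(1/\delta))$ repetitions; both cost $2^{O(K)}$ queries to $A$ per copy once queries to $S$ are simulated by $2^{2K}$ queries to $A$ as in the classical proof.

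The place you correctly flag as ``the main obstacle'' is also the only place where your argument is not yet a proof. Neither of your two suggested fixes does the job: invoking the Arunachalam--Dutt polynomial Gowers inversion only re-establishes \emph{existence} of a correlating quadratic (which you already have from Claim~\ref{claim:highu3}) and gives no algorithm to extract one from the stabilizer description in hand; and ``absorbing $\one_{\mathcal A}$ and $i^{\ell}$ into a redefinition of the Boolean function'' does not produce an explicit quadratic polynomial over $\F_2^{m+n}$. The paper's concrete conversion (proof of Lemma~\ref{lem:findingfreimanquantum}) goes as follows: from $|\langle\psi|s\rangle|\geq\sigma/\sqrt2$ one deduces $|A_s|\geq(\sigma^3/2)\,2^{m+n}$, hence $|H_s^\perp|\leq 2/\sigma^3=\mathrm{poly}(K)$; Fourier-expand $\one_{A_s}(z)=|H_s|\,2^{-(m+n)}\sum_{\lambda\in H_s^\perp}(-1)^{\lambda\cdot(a+z)}$ and apply the triangle inequality to find some $\lambda^\star\in H_s^\perp$ with $\big|\EE_z\, g_S(z)(-1)^{q_s(z)+\lambda^\star\cdot z}i^{\ell_s(z)}\big|\geq\sigma^3/2$; then eliminate the $i^{\ell_s}$ factor by observing that one of the two \emph{real} quadratics $p_0(z)=q_s(z)+\lambda^\star\cdot z$ and $p_1(z)=p_0(z)+\ell_s(z)$ must achieve correlation $\geq\sigma^3/2$ with $g_S$ (this is just $\max\{|R+I|,|R-I|\}\geq\sqrt{R^2+I^2}$). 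One then forms the list of all $2|H_s^\perp|\leq 4/\sigma^3$ candidate quadratics $p_0^\lambda,p_1^\lambda$, prepares each as a stabilizer state via Lemma~\ref{lem:clifford_synthesis}, and selects the best by SWAP test (Lemma~\ref{lem:swap_test}) against fresh copies of $|\psi\rangle$. After that, the bilinear-form extraction and offset-sampling proceed exactly as in Lemma~\ref{lem:findingaffine}, as you say.
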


To prove the above theorem, we will reprove Lemma~\ref{lem:findingaffine} in the quantum setting, but now taking advantage of the main result (Theorem~\ref{thm:sitanbootstrapping}) of Chen, Gong, Ye, and Zhang~\cite{chen2024stabilizer}, which allows us to find the closest stabilizer state to a given unknown $n$-qubit quantum state. Formally, the quantum version of Lemma~\ref{lem:findingaffine} is as follows.

\begin{lemma}
\label{lem:findingfreimanquantum}
Suppose $S\subseteq \F_2^m$ and $f: S \to \F_2^n$ satisfy
$$\big|\big\{(x_1, x_2, x_3, x_4)\in S^4:\: x_1+x_2 = x_3+x_4 \,\text{ and }\, f(x_1)+f(x_2) = f(x_3)+f(x_4)\big\}\big| \geq 2^{3m}/K.$$
There is a quantum algorithm that makes $K^{O(\log K)} (m+n)$ quantum queries to $S$ and to $f$, runs in $K^{O(\log K)}(m+n)^3$ time and, with probability at least $0.7$, returns $M\in \F_2^{n\times m}$, $v\in \F_2^n$ such that
$$\big|\big\{x\in S:\: f(x) = Mx+v\big\}\big| \geq 2^m/P_4'(K).$$
\end{lemma}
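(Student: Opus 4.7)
The plan is to parallel the proof of Lemma~\ref{lem:findingaffine} but replace its use of the quadratic Goldreich-Levin theorem (Theorem~\ref{thm:quadratic_GL}) with the stabilizer learning algorithm (Theorem~\ref{thm:sitanbootstrapping}).  The central idea is to encode the classical function $g(x,y) = \one_S(x)(-1)^{f(x)\cdot y}$ from the classical proof as the amplitudes of a quantum state, so that the statement ``$g$ correlates well with a quadratic polynomial'' becomes ``this quantum state has non-negligible fidelity with a stabilizer state.''

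First I would prepare copies of the $(m+n)$-qubit state
$$\ket{\Phi_g} := \frac{1}{\sqrt{|S|\cdot 2^n}} \sum_{x\in S,\, y\in \F_2^n} (-1)^{f(x)\cdot y}\ket{x}\ket{y},$$
whose amplitudes are proportional to $g$.  Each copy can be prepared using $O(K^{1/6})$ quantum queries to $\one_S$ and one quantum query to $f$ (the hypothesis forces $|S|\geq 2^m/K^{1/3}$, so Grover-style amplitude amplification against $\one_S$ produces $|S|^{-1/2}\sum_{x\in S}\ket{x}$ in that many queries; a single coherent query to $f$ and a final $H^{\otimes n}$ on the $y$-register complete the preparation).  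Next, adapting Claim~\ref{claim:highu3}, the affine-linear $\psi$ guaranteed by Lemma~\ref{lem:closeaffine} yields the stabilizer state $\ket{\xi_\psi} := 2^{-(m+n)/2}\sum_{x,y}(-1)^{\psi(x)\cdot y}\ket{x,y}$ (which is stabilizer by Theorem~\ref{thm:neststabilizer}), and a short calculation gives $|\langle \xi_\psi | \Phi_g\rangle|^2 \geq 1/P_4(K)^2 =: \tau$.  Running Theorem~\ref{thm:sitanbootstrapping} with this $\tau$ and $\varepsilon=\tau/2$ then returns, with high probability, (a classical description of) a stabilizer state $\ket{\phi^*}=|A|^{-1/2}\sum_{z\in A}i^{\ell(z)}(-1)^{q(z)}\ket{z}$ satisfying $|\langle \phi^*|\Phi_g\rangle|^2 \geq \tau/2$, at a total cost of $(m+n)K^{O(\log K)}$ copies and $(m+n)^3 K^{O(\log K)}$ time.

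The main technical step is to distill from $\ket{\phi^*}$ a single quadratic $q_{L^*}$ on $\F_2^{m+n}$ that satisfies the analogue of Eq.~\eqref{eq:highcorrelation}.  Splitting the overlap $\langle \phi^*|\Phi_g\rangle$ into its real and imaginary parts (using that the amplitudes of $\ket{\Phi_g}$ are real) produces some $b\in\{0,1\}$ with
$$\Big|\Exp_{z\in\F_2^{m+n}}\one_{A_b}(z)\,g(z)\,(-1)^{q(z)}\Big| \;\geq\; \gamma \;=\; \Omega(1/\poly(K)),$$
where $A_b := \{z\in A:\ell(z)=b\}$ is an affine subspace.  This inequality forces the density of $A_b$ in $\F_2^{m+n}$ to be at least $\gamma$, so the codimension $k$ of its linear part $V_b$ is at most $O(\log K)$.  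Expanding $\one_{A_b}(z) = 2^{-k}\sum_{L\in V_b^\perp}(-1)^{L(z)+L(u_b)}$ and applying the triangle inequality, there must exist some $L\in V_b^\perp$ such that the quadratic $q_L := q+L$ has $|\Exp_z g(z)(-1)^{q_L(z)}|\geq \gamma$.  Since $|V_b^\perp| = 2^k = \poly(K)$, I can enumerate all candidates, empirically estimate each correlation against $g$ using $\poly(K)$ random samples and queries (where a query to $g$ costs one query each to $\one_S$ and $f$), and keep the best $L^*$, yielding the desired quadratic.

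From here the proof concludes exactly as in Lemma~\ref{lem:findingaffine}: extract the matrix $M = A_{12}^T + A_{21}$ from the bilinear part of $q_{L^*}$, run the Fourier-analytic argument on $h(y) = (-1)^{q_{L^*}(0,y)}$ to pin down a small candidate set $T\subseteq\F_2^n$, and select the translate $v = z^*$ by sampling and estimating agreement probabilities.  All costs outside the stabilizer-learning call are absorbed, producing the claimed $K^{O(\log K)}(m+n)$ query complexity and $K^{O(\log K)}(m+n)^3$ running time.  The hardest part will be the extraction step above: one must carefully handle the affine subspace $A$ and the quaternary phase $i^{\ell(z)}$ in the stabilizer description of $\ket{\phi^*}$, and the crucial quantitative point is that the fidelity lower bound pins the codimension of $A_b$ at only $O(\log K)$, so that exhaustive enumeration over $V_b^\perp$ remains polynomial in $K$.
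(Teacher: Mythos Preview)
Your proposal is correct and follows essentially the same strategy as the paper: encode $g$ as the quantum state $\ket{\Phi_g}$, invoke Theorem~\ref{thm:sitanbootstrapping} to learn a nearby stabilizer state, then extract a globally-defined quadratic by combining a Fourier expansion of the affine-support indicator with a treatment of the $i^{\ell}$ phase, and finish exactly as in Lemma~\ref{lem:findingaffine}. The differences are only in implementation order and choices---the paper prepares $\ket{\Phi_g}$ by post-selection (Claim~\ref{claim:prep_psi}) rather than amplitude amplification, handles the $i^{\ell}$ phase \emph{after} the Fourier expansion of $\one_{A_s}$ rather than before, and selects the best candidate quadratic via the \textsf{SWAP} test rather than classical sampling---none of which changes the argument or the stated complexities.
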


To prove Lemma~\ref{lem:findingfreimanquantum} and describe its corresponding algorithm, we need a quantum protocol to prepare the quantum state that encodes the function 
$$
g_S(x,y) = \one_S(x) (-1)^{f(x)\cdot y},
$$
which from Claim~\ref{claim:highu3}, we know has high correlation with a quadratic function.

\begin{claim}\label{claim:prep_psi}
Consider the context of Lemma~\ref{lem:findingfreimanquantum}. Let $\delta \in (0,1)$. Suppose we have quantum query access to $S$ via the oracle $O_S$ and query access to  $f: S \rightarrow \F_2^n$ via the oracle $O_f$ as~follows
$$
\ket{x,0} \stackrel{O_S}\longrightarrow \ket{x,\one_S(x)}, \quad \ket{x,0^n} \stackrel{O_f}\longrightarrow \ket{x,f(x)}.
$$
Then, there exists a quantum algorithm that prepares the $(m+n)$-qubit state $\ket{\psi}$ encoding $g_S(x,y)$ for $(x,y) \in \FF_2^m \times \F_2^n$ as follows
$$
\ket{\psi}= \frac{1}{\sqrt{2^n |S|}}\sum_{\substack{x\in S,\\y\in \F_2^n}}(-1)^{f(x)\cdot y}\ket{x,y}.
$$
The algorithm makes $O(K \log(1/\delta))$ queries to $O_S, O_f$ and uses $O(K n \log(1/\delta))$ gate complexity to prepare one copy of $\ket{\psi}$ with probability at least $1-\delta$.
\end{claim}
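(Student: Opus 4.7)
The plan is to prepare $\ket{\psi}$ in three stages: (i) produce a uniform superposition over $S$ by rejection sampling using $O_S$; (ii) write $f(x)$ into the second register with one call to $O_f$; and (iii) convert that register into the desired Fourier phase with Hadamards.

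More concretely, I would start from $\ket{0^m}\ket{0^n}\ket{0}$, apply $H^{\otimes m}$ to the first register, and query $O_S$ to write $\one_S(x)$ into the flag qubit, obtaining $2^{-m/2}\sum_{x\in \F_2^m}\ket{x}\ket{0^n}\ket{\one_S(x)}$. Measuring the flag yields outcome $1$ with probability $|S|/2^m$ and leaves the residual state $|S|^{-1/2}\sum_{x\in S}\ket{x}\ket{0^n}$. One call to $O_f$ then produces $|S|^{-1/2}\sum_{x\in S}\ket{x}\ket{f(x)}$, and applying $H^{\otimes n}$ on the second register together with the identity $H^{\otimes n}\ket{f(x)} = 2^{-n/2}\sum_{y\in \F_2^n} (-1)^{f(x)\cdot y}\ket{y}$ delivers $\ket{\psi}$ exactly. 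Absorbing $f(x)$ into a Fourier phase via Hadamards is what lets us avoid a second query to $O_f$ to uncompute that register.

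To bound the number of rejection-sampling attempts, I would lower bound $|S|$ using the hypothesis of Lemma~\ref{lem:findingfreimanquantum}: the number of good quadruples is bounded above by the additive energy $E(S) \leq |S|^3$, while the hypothesis forces that count to be at least $2^{3m}/K$. Hence $|S|/2^m \geq K^{-1/3} \geq 1/K$, so each attempt succeeds with probability at least $1/K$, and $O(K\log(1/\delta))$ repetitions drive the overall failure probability below $\delta$. This accounts for the $O(K\log(1/\delta))$ queries to $O_S$; the single query to $O_f$ and the two layers of Hadamards contribute negligibly. The gate complexity is dominated by the $O(m+n)$ single-qubit gates used per attempt, for a total of $O((m+n)K\log(1/\delta))$, which simplifies to $O(nK\log(1/\delta))$ once $m = O(n)$. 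The main thing to verify is that the $f$-register is clean when $H^{\otimes n}$ is applied, which is automatic since the only prior operation on it is the single $O_f$ call from the $\ket{0^n}$ state.
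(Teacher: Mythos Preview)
Your proof is correct and in fact cleaner than the paper's. After preparing $|S|^{-1/2}\sum_{x\in S}\ket{x}$ by rejection sampling (which the paper does identically), the paper tensors with a uniform superposition over $y$, writes $f(x)$ into a \emph{separate} ancilla register via $O_f$, computes $f(x)\cdot y$ bit by bit with $n$ CCNOT and $n$ CNOT gates, phase-kicks the parity via a Hadamard-and-measure step succeeding with probability $1/2$, and then uncomputes the ancillas with a second call to $O_f$. Your observation that $H^{\otimes n}\ket{f(x)} = 2^{-n/2}\sum_y (-1)^{f(x)\cdot y}\ket{y}$ collapses all of this into one step with no ancillas, no second $O_f$ call, and no extra measurement. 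You also give an explicit derivation of $|S|/2^m \geq 1/K$ from the hypothesis via $E(S)\leq |S|^3$, which the paper asserts without justification. The only loose end, the assumption $m=O(n)$ needed to state the gate count as $O(Kn\log(1/\delta))$ rather than $O(K(m+n)\log(1/\delta))$, is a simplification the paper makes silently as well and holds in the intended application.
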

\begin{proof}
First, given quantum query access to $S$, the algorithm prepares
$$
\frac{1}{\sqrt{2^m}}\sum_{x\in \F_2^m} \ket{x,0} \stackrel{O_S}{\longrightarrow} \frac{1}{\sqrt{2^m}}\sum_{x\in \F_2^m} \ket{x,\one_S(x)},
$$
and measures the second register. With probability $|S|/2^m \geq 1/K$, the algorithm obtains $1$ in which case the resulting state is $\ket{S}=\frac{1}{\sqrt{|S|}}\sum_{x\in S}\ket{x}$. So making $O(K \log(1/\delta))$ quantum queries, one can prepare the state $\ket{S}$ with probability at least $1-\delta$. After this, the algorithm simply performs the following
\begin{align}
\frac{1}{\sqrt{|S|}}\sum_{x\in S}\ket{x}\otimes \frac{1}{\sqrt{2^n}}\sum_{y\in \F_2^n}\ket{y} &\stackrel{O_f}\longrightarrow \frac{1}{\sqrt{2^n|S|}}\sum_{\substack{x\in S,\\y\in \F_2^n}}\ket{x,y,f(x)}\\
&\longrightarrow \frac{1}{\sqrt{2^n|S|}}\sum_{\substack{x\in S,\\y\in \F_2^n}}\ket{x,y,f(x)}\otimes_{i=1}^n\ket{f(x)_i\cdot y_i}\\
&\longrightarrow \frac{1}{\sqrt{2^n|S|}}\sum_{\substack{x\in S,\\y\in \F_2^n}}\ket{x,y}\ket{f(x)}\otimes_{i=1}^n\ket{f(x)_i\cdot y_i} \ket{f(x)\cdot y}.
\label{eq:interim_state}
\end{align}
where the second operation is by applying  $n$ many CCNOT gates with the control qubits being $y_i,f(x)_i$ applied onto the target qubit $\ket{0}_i$, and the third operation is by applying $n$ CNOT gate between the control qubit $\ket{f(x)_i\cdot y_i}$ and target qubit $\ket{0}$.

After obtaining the state in Eq.~\eqref{eq:interim_state}, the algorithm applies a single-qubit Hadamard on the last qubit and measures in the computational basis. If it is $1$, it continues. First note that if the last qubit was $1$, then the resulting quantum state is
\begin{align}
     \frac{1}{\sqrt{2^n |S|}}\sum_{\substack{x\in S,\\y\in \F_2^n}}(-1)^{f(x)\cdot y}\ket{x,y}\ket{f(x)}\otimes_{i=1}^n\ket{f(x)_i\cdot y_i} \ket{1}.
\end{align}
Furthermore the probability of obtaining $1$ is exactly $1/2$. Upon succeeding, the algorithm inverts the $n$ many CCNOT gates and the query operator $O_f$ to obtain the state
$$
\ket{\psi}= \frac{1}{\sqrt{2^n |S|}}\sum_{\substack{x\in S,\\y\in \F_2^n}}(-1)^{f(x)\cdot y}\ket{x,y}.
$$
Overall, the algorithm used $O(Kn \log(1/\delta))$ many quantum gates and $O(K \log(1/\delta))$ queries to prepare one copy of $\ket{\psi}$.
\end{proof}

We are now ready to prove Lemma~\ref{lem:findingfreimanquantum}.
\begin{proof}[Proof of Lemma~\ref{lem:findingfreimanquantum}]
The proof will be similar to the classical proof in Lemma~\ref{lem:findingaffine}.
Similar to the proof there, we are guaranteed by Claim~\ref{claim:highu3} that there exists a \emph{quadratic} polynomial $q: \F_2^{n} \times \F_2^n \rightarrow \{0,1\}$ which has high correlation with $g_S(x,y)=\one_S(x) (-1)^{f(x)\cdot y},$ i.e.,
$$
\left| \Exp_{x,y \in \mathbb{F}_2^m \times \mathbb{F}_2^n} [\one_S(x)(-1)^{f(x)\cdot y} (-1)^{q(x, y)}] \right| \geq \frac{1}{P_4(K)},
$$
where $P_4(\cdot)$ is the polynomial promised by Lemma~\ref{lem:closeaffine}. For simplicity in notation, let us denote $\sigma:=1/P_4(K)$. In particular, defining the quantum states 
$$
\ket{\psi} = \frac{1}{\sqrt{2^n |S|}}\sum_{x\in S,\\y\in \F_2^n}(-1)^{f(x)\cdot y}\ket{x,y}, \quad \ket{\phi_q}= \frac{1}{\sqrt{2^{n+m}}} \sum_{x\in \F_2^n,y\in \F_2^m}(-1)^{q(x, y)}\ket{x,y},
$$
we have that $|\langle \psi|\phi_q\rangle|^2 \geq \sigma^{2}$. Moreover by Theorem~\ref{thm:neststabilizer}, we note that the quantum state $\ket{\phi_q}$ is a \emph{stabilizer state}\footnote{We remark that $\ket{\phi_q}$ is in fact a degree-$2$ phase state (i.e., the subspace is $\F_2^{m+n}$ and there are no complex phases), but we will not use that here.}, and thus the stabilizer fidelity of $\ket{\psi}$ is also at least $\sigma^{2}$. At this point, we use Theorem~\ref{thm:sitanbootstrapping} on copies of $\ket{\psi}$ prepared using Claim~\ref{claim:prep_psi}, with the error instantiated as $\varepsilon=\sigma^{2}/2$ to learn a stabilizer state $\ket{s}$ such that $|\langle s| \psi \rangle|^2 \geq \sigma^{2}/2$. By Theorem~\ref{thm:neststabilizer}, we can write this stabilizer state as
\begin{equation}\label{eq:stab_state_from_bootstrapping}
\ket{s} = \frac{1}{\sqrt{|A_s|}} \sum_{z \in A_s} i^{\ell_s(z)} (-1)^{q_s(z)} \ket{z},    
\end{equation}
where $A_s \subseteq \FF_2^{n+m}$ is an affine subspace, $\ell_s$ is a linear polynomial and $q_s$ is a quadratic form. To lower bound the size of $|A_s|$, we will lower bound the size of $|A_s \cap T|$ (denoting $T:= S \times \FF_2^n$):
\begin{align*}
\frac{\sigma}{\sqrt{2}} \leq |\langle \psi | s \rangle| 
&= \Big|\frac{1}{\sqrt{|A_s|\cdot|S|\cdot 2^n}} \sum_{\substack{x \in S \\ y \in \FF_2^n \\ (x,y) \in A_s}} i^{\ell(x,y)} (-1)^{q(x,y) + f(x)\cdot y} \Big| \\
&\leq \frac{1}{\sqrt{|A_s| \cdot |S|\cdot 2^n}} \sum_{(x,y) \in A_s \cap T} \Big| i^{\ell(x,y)} (-1)^{q(x,y) + f(x)\cdot y} \Big|\\
& \leq \frac{\sqrt{|A_s \cap T|}}{\sqrt{|S|\cdot 2^n}},    
\end{align*}
where we have used the triangle inequality in the second line and noted that each internal term is at most $1$ in the final inequality along with using $|A_s| \geq |A_s \cap T|$. The above result implies that $|A_s|$ is large i.e.,
\begin{equation}\label{eq:lb_size_As}
|A_s| \geq |A_s \cap T| \geq (\sigma^{3}/2) 2^{m+n},
\end{equation}
as $|S| \geq \sigma \cdot 2^m$. Writing $A_s = a + H_s$ where $H_s$ is a linear subspace, we then have $\textsf{codim}(H_s) \leq \log(2/\sigma^{3})$. To obtain a quadratic phase state $\ket{\phi_p}$ corresponding to a quadratic phase polynomial $p:\FF_2^{n+m} \rightarrow \FF_2$ that has high fidelity with $\ket{\psi}$ from the description of $\ket{s}$, we have the following observations (similar to that in \cite[Proof of Theorem~1.1]{briet2025near}) and which will inform our approach. 

Let us denote the orthogonal complement of $H_s$ as $H_s^\perp = \{x \in \FF_2^{n+m} : x \cdot h = 0, \,\,\forall h \in H_s\}$. The Fourier decomposition of $\one_{A_s}(x)$ is given by
\begin{equation}\label{eq:fourier_indicator_coset}
\one_{A_s}(x) = \frac{|H_s|}{|2^{n+m}|}\sum_{\lambda \in H_s^\perp} (-1)^{\lambda \cdot (a+x)},    
\end{equation}
which follows from the observation that
\begin{align*}
\Exp_x[\one_{A_s}(x) (-1)^{\lambda \cdot x}] = 2^{-(n+m)} \sum_{x \in H_s} (-1)^{\lambda \cdot (a+x)} 
&= |H_s| 2^{-(n+m)} (-1)^{\lambda \cdot a} \Exp_{x \in H_s}[(-1)^{\lambda \cdot x}] \\
&= |H_s| 2^{-(n+m)} (-1)^{\lambda \cdot a} [\lambda \in H_s^\perp].    
\end{align*}
We then observe that
\begin{align}
    \sigma/\sqrt{2} \leq |\la \psi | s \ra |
    &= \frac{1}{\sqrt{|A_s|\cdot|S|\cdot 2^n}} \Big| \sum_{z \in \FF_2^{n+m}} \one_{A_s}(z) g_S(z) (-1)^{q_s(z)} i^{\ell_s(z)} \Big| \\
    &= \frac{|H_s|}{\sqrt{|A_s|\cdot|S|\cdot 2^n}} \Big| \sum_{\lambda \in H_s^\perp} \Exp_{z \in \FF_2^{n+m}} (-1)^{\lambda \cdot (a+z)} g_S(z) (-1)^{q_s(z)} i^{\ell_s(z)} \Big| \\
    &\leq \frac{|H_s|\cdot|H_s^\perp|}{\sqrt{|A_s|\cdot|S|\cdot 2^n}} \max_{\lambda \in H_s^\perp} \Big|  \Exp_{z \in \FF_2^{n+m}} (-1)^{\lambda \cdot z} g_S(z) (-1)^{q_s(z)} i^{\ell_s(z)} \Big| \\
    &\leq \frac{\sqrt{2}}{\sigma^{2}} \max_{\lambda \in H_s^\perp} \Big|  \Exp_{z \in \FF_2^{n+m}} (-1)^{\lambda \cdot z} g_S(z) (-1)^{q_s(z)} i^{\ell_s(z)} \Big| \label{eq:max_lambda},
\end{align}
where we used the Fourier decomposition of $\one_{A_s}(z)$ from Eq.~\eqref{eq:fourier_indicator_coset} in the second line, applied the triangle inequality along with considering the $\lambda \in H_s^\perp$ which maximizes the expectation in the third line, and finally used Eq.~\eqref{eq:lb_size_As} as well as noting $|H_s|\cdot|H_s^\perp| = 2^{n+m}$ and $|S|\geq \sigma \cdot 2^m$. From Eq.~\eqref{eq:max_lambda}, we have that $\exists \lambda^\star \in H_s^\perp$ such that
$$
\Big|  \Exp_{z \in \FF_2^{n+m}} g_S(z) (-1)^{q_s(z) + \lambda^\star \cdot z} i^{\ell_s(z)} \Big| \geq \sigma^{3}/2.
$$
Let us define the function $h(z) := g_S(z) (-1)^{q_s(z) + \lambda^\star \cdot z}$. Additionally, we denote $R_h = \mathrm{Re}(\Exp_z[h(z)])$ to be the real part of the above expectation and $I_h = \mathrm{Im}(\Exp_z[h(z)])$ to be the imaginary part of the expectation. Now, we consider the two candidate quadratic polynomials $p_0(z) := q_s(z) + \lambda^\star \cdot z$ and $p_1(z) := q_s(z) + \lambda^\star \cdot z + \ell_s(z)$, where $q_s$ and $\ell_s$ are the quadratic and linear polynomials corresponding to the stabilizer state $\ket{s}$ in hand~(Eq.~\eqref{eq:stab_state_from_bootstrapping}). We observe that the quadratic phase states $\ket{\phi_{p_0}}$ and $\ket{\phi_{p_1}}$ satisfy
\begin{align*}
    |\la \psi | \phi_{p_0}\ra| &= \Big|  \Exp_{z \in \FF_2^{n+m}}[h(z)] \Big| = \Big|  \Exp_{z \in \FF_2^{n+m}}[h(z) \one_{\ell_s(z)=0}] + \Exp_{z \in \FF_2^{n+m}}[h(z) \one_{\ell_s(z)=1}] \Big| = |R_h + I_h| \\
    |\la \psi | \phi_{p_1}\ra| &= \Big|  \Exp_{z \in \FF_2^{n+m}}[h(z)(-1)^{\ell_s(z)}] \Big| = \Big|  \Exp_{z \in \FF_2^{n+m}}[h(z) \one_{\ell_s(z)=0}] - \Exp_{z \in \FF_2^{n+m}}[h(z) \one_{\ell_s(z)=1}] \Big| = |R_h - I_h|
\end{align*}
Noting that $\max\{|a+b|,|a-b|\} = |a| + |b| \geq \sqrt{|a|^2 + |b|^2}$, we then have
\begin{equation}\label{eq:promise_quadratics}
    \max\{|\la \psi | \phi_{p_0} \ra|, |\la \psi | \phi_{p_1} \ra|\} \geq \Big|  \Exp_{z \in \FF_2^{n+m}} g_S(z) (-1)^{q_s(z) + \lambda^\star \cdot z} i^{\ell_s(z)} \Big| \geq \sigma^{3}/2.
\end{equation}
In other words, one of the quadratic polynomials $p_0$ or $p_1$ has high correlation with $g_S(z)$. 

To determine this quadratic polynomial, we now use the following approach. We create the list of candidate quadratic polynomials $L$ where we add the polynomials $p_0^\lambda(z) := q_s(z) + \lambda \cdot z$ and $p_1^\lambda(z) := q_s(z) + \lambda \cdot z + \ell_s(z)$ for each $\lambda \in H_s^\perp$. This list will be of size $|L| = 2 |H_s^\perp| \leq 4/\sigma^{3}$, where we have used $\textsf{codim}(H_s) \leq \log(2/\sigma^{3})$. For each $p \in L$, we prepare copies of the quadratic phase state $\ket{\phi_p}$ (which is also a stabilizer state) using Lemma~\ref{lem:clifford_synthesis} and then measure $|\la \psi | \phi_p \ra|^2$ using the \textsf{SWAP} test (Lemma~\ref{lem:swap_test}) up to error $\sigma^{3}/4$ and output the quadratic polynomial $p^\star$ that maximizes the fidelity. This consumes $\poly(1/\sigma)$ sample complexity and $O(n^2/\log n \cdot\poly(1/\sigma))$ time complexity. We are guaranteed by Eq.~\eqref{eq:promise_quadratics} that $(-1)^{p^\star}$ satisfies
\begin{equation}
\Big| \Exp_{(x,y) \in \FF_2^{m} \times \FF_2^n} \one_S(x) (-1)^{p^\star(x,y) + f(x)\cdot y} \Big| \geq \frac{\sigma^{3}}{2} = \frac{1}{2P_4(K)^3},    
\end{equation}
where we have substituted back $\sigma=1/P_4(K)$ set earlier. Having determined the polynomial $p^\star$, we now proceed as in Lemma~\ref{lem:findingaffine} to determine the affine linear function $\varphi$ that agrees with $\phi$ on many values $x \in S$. This completes the proof of the lemma. The main contribution to query complexity and time complexity is utilizing Theorem~\ref{thm:sitanbootstrapping}.
\end{proof}
With this lemma, we are finally ready to prove the main theorem in this section.

\begin{proof}[Proof of Theorem~\ref{thm:algoquantumPFR1}]
The proof of this theorem is very similar to the classical proof. We write the details~below:
\begin{enumerate}
    \item Sample $t = 28 \log|A| + 56K$ many uniformly random elements from $A$, and denote their linear span by $U$.
    Let $A' := A\cap U$.
    \item Take a random linear map $\pi: U \to \mathbb{F}_2^m$ where $m = \log|A| + 4\log K + 10$. Let $S = \pi(A')$ denote the image of $A'$ under $\pi$, and let $f: S \to U$ be the inverse of $\pi$ when restricted to $S$.
    \item Apply Lemma~\ref{lem:findingfreimanquantum} to obtain an affine-linear map $\psi: \mathbb{F}_2^m \to U$ such that $f(x) = \psi(x)$ for at least $|A|/P_4'\big(2^{33}K^{13}\big)$ values $x \in S$.
    \item Take a subspace $V$ of $\textsf{Im}(\psi)$ having size at most $|A|$, and output a basis for $V$.
\end{enumerate}
The only difference between the classical and quantum algorithm is in Step $(3)$.  So, we do not reproduce the correctness analysis and refer the reader to the classical proof of Theorem~\ref{thm:classicalPFR}.

Overall, the complexity of the algorithm is as follows.
The sample complexity to the set $A$ is $O(K+\log |A|)$, as given in step $(1)$.
Computing $\ker(\pi) \leq U$ and $\pi^{-1}: \mathrm{Im}(\pi) \to U/\ker(\pi)$ takes $O(m^2n)$ time and, after this is done, each query to $S$ and $f$ takes $2^{2K}$ queries to $A$ and $O(mn)$ time.
The total number of queries to $A$ needed to apply Lemma~\ref{lem:findingfreimanquantum} is then
$$2^{2K} \cdot K^{O(\log K)} (m+\log|U|) = 2^{O(K)} \log |A|,$$
where we used that $m = \log |A| + O(\log K)$ and $\log|U| = O(\log|A| + K)$.
The total runtime is the cost of Lemma~\ref{lem:findingfreimanquantum}, the cost of inverting $\pi$ and the cost of making queries to $S$ and $f$, i.e.
$$K^{O(\log K)}(m+\log|U|)^3 + O(m^2 n) + K^{O(\log K)} (m+\log|U|) \cdot O(mn) = K^{O(\log K)} n^3,$$
concluding the proof of the theorem.
\end{proof}

\section{Lower bounds}
\label{sec:lowerbounds}
In this section, we show that the query complexities of the classical and quantum algorithms presented in Section~\ref{sec:algo_PFR} are essentially optimal in terms of their dependence on $n$. Our lower bounds are a simple information-theoretic argument (similar to the argument in~\cite[Proposition~1]{montanaro2012quantum}). We remark that both our lower bounds will consider the \emph{hard instances} to be \emph{dense} sets $A\subseteq \F_2^n$ (i.e., $|A|/2^n\geq \Omega(1)$). In this case the classical and quantum algorithm need not use random samples from $A$ at all: observe that randomly sampled points will lie in $A$ (which can be verified by queries) with constant probability, so with a constant overhead in complexity one can remove the need for samples in the classical algorithms.

\subsection{Classical lower bound}
We first show that the classical algorithm is essentially optimal in terms of the dependence in $n$.

\begin{theorem}
\label{thm:classicallowerbound}
     Suppose $A \subseteq \mathbb{F}_2^n$ satisfies $|A + A| \leq K|A|$.
     Suppose an algorithm makes $t$ queries to $O_A$ and, with non-negligible probability, identifies a subspace $H \leq \mathbb{F}_2^n$ that covers $A$ by $(2K)^C$ cosets for some constant $C>0$.
     Then $t=\Omega_K(n^2)$. 
\end{theorem}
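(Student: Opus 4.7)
The plan is to prove the lower bound via Yao's minimax principle: I exhibit a distribution over hard instances $A$ with $N = 2^{\Omega(n^2)}$ distinct choices, each with doubling constant $K = 1$, and show that any given output $H$ is correct for only $2^{O(n)}$ of them. Since a deterministic $t$-query algorithm has at most $2^t$ possible transcripts (and hence $2^t$ distinct outputs), any algorithm succeeding with probability at least $1/\poly(n)$ must satisfy $2^t \cdot 2^{O(n)} \geq N/\poly(n) = 2^{n^2/4 - O(n)}$, yielding $t = \Omega(n^2)$. Yao's minimax then transfers this to randomized algorithms. The hard distribution I will use is the uniform distribution over all $\lfloor n/2\rfloor$-dimensional linear subspaces $A \leq \F_2^n$; these are dense in $\F_2^n$ (so random-sample access is not needed, as remarked in the section preamble), satisfy $|A+A| = |A|$ (hence $K = 1$), and number $N = 2^{n^2/4 - O(n)}$ by a standard Gaussian-binomial computation.

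The core of the argument is the counting claim: for any output $H \leq \F_2^n$ with $|H| \leq |A| = 2^{\lfloor n/2 \rfloor}$, the hard instances consistent with $H$ are the $\lfloor n/2 \rfloor$-dimensional subspaces contained in some cover set $U = \bigcup_{i=1}^c (v_i + H)$ comprising at most $c = (2K)^C = 2^C$ cosets of $H$. For a fixed $H$, the number of distinct such cover sets is at most $\binom{2^{n - \dim H}}{2^C} = 2^{O(n)}$; and for any fixed $U$ of size $|U| \leq 2^{C + n/2}$, the number of $\lfloor n/2\rfloor$-dimensional subspaces of $\F_2^n$ contained in $U$ is bounded by
$$
\frac{|U|^{\lfloor n/2\rfloor}}{\prod_{i=0}^{\lfloor n/2\rfloor - 1}\bigl(2^{\lfloor n/2\rfloor} - 2^i\bigr)} \;=\; \frac{2^{n^2/4 + O(n)}}{2^{n^2/4 - O(1)}} \;=\; 2^{O(n)},
$$
via the standard double-count of ordered bases (any $\lfloor n/2\rfloor$-subspace inside $U$ is determined by an ordered basis drawn from $U$, and each subspace has $\prod_i (2^{\lfloor n/2\rfloor} - 2^i)$ ordered bases). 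Combining the two upper bounds yields at most $2^{O(n)}$ hard instances consistent with any given output $H$, which is exactly what the information-theoretic step needs.

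The main subtlety I foresee is the tightness of this subspace count: the ordered-basis bound only stays at $2^{O(n)}$ because $|U|$ is within a constant factor of $|A|$. Were $|U|$ allowed to be significantly larger than $|A|$, the ratio above could absorb a constant fraction of the $2^{n^2/4}$ hard instances and destroy the lower bound --- so the near-minimality $|H| \leq |A|$ in the algorithm's guarantee is essential to the argument. Beyond this, the remaining work is bookkeeping: tracking the lower-order $O(n)$ error terms carefully inside the $n^2/4$ budget, and verifying that Yao's minimax principle applies cleanly given the non-negligible success hypothesis, with the $K$-dependence absorbed into the constants since we only invoke the $K = 1$ family.
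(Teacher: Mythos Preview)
Your proof is correct and follows essentially the same strategy as the paper: take random linear subspaces of $\F_2^n$ as the hard instances (there are $2^{\Theta(n^2)}$ of them, with $K=1$), and argue that $t$ one-bit query answers cannot narrow this down to the $2^{O_K(n)}$ candidates consistent with any fixed output $H$; the paper packages this via Fano's inequality on $I(\mathbf{H}:\mathbf{H}')\le t$, while you unpack the equivalent bound through Yao's minimax and an explicit transcript/consistency count. One minor slip to fix: $\lfloor n/2\rfloor$-dimensional subspaces have density $2^{-\lceil n/2\rceil}$, not $\Omega(1)$, so your parenthetical that they are ``dense'' and that sample access can be simulated by queries is incorrect --- this does not affect the query lower bound itself, which is for query-only access, but the justification should be dropped.
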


\begin{proof}
The proof follows from letting $A$ itself be a subspace of $\F_2^n$ (clearly subspaces have doubling constant $1$). Let $\textbf{H}$ be a uniformly random subspace in $\F_2^n$. 
Given query access to $H\leq \F_2^n$, the algorithm needs to output a subspace $H'$ such that $(2K)^C$ translates of it cover $H$.
In particular, this implies that one learns a subspace $H\cap H'$ whose size is at least $2^n/(2K)^C$ (i.e., has dimension at least $n-C\log(2K)$).

Now, observe that every classical query provides one bit of information, i.e., on input $x\in \F_2^n$, outputs if $x\in H$ or not. Let the random variable $\textbf{H'}$ be the random variable corresponding to the output of the algorithm  after making $t$ queries to $O_H$. Then we have~that
$$
I(\textbf{H}:\textbf{H'})\leq t,
$$
since the algorithm makes $t$ queries each giving one bit of information. Furthermore since the goal of the algorithm is to obtain $\textbf{H}\cap \textbf{H'}$ which is a subspace of dimension $n-C\log (2K)$, by Fano's inequality~\cite{cover1999elements} we have that 
$$
\Pr[\text{identification}]\geq 1-\frac{I(\textbf{H}:\textbf{H'})}{\log |\textsf{supp}(\textbf{H})|}\geq 1-\frac{t}{\log (2^{\Theta((n-C\log (2K))^2)})} = 1 - O\Big(\frac{t}{(n-C\log (2K))^2}\Big),
$$
where we used that the number of subspaces in $\F_2^n$ is $2^{\Theta(n^2)}$. The above implies that $t=\Omega_K(n^2)$ for an algorithm that identifies $\textbf{H}$ with non-negligible probability.
\end{proof}

\subsection{Quantum lower bound}  We now show that the quantum algorithm is optimal in terms of the dependence in $n$. 
\begin{theorem}
  Suppose $A \subseteq \mathbb{F}_2^n$ satisfies $|A + A| \leq K|A|$.  Suppose an algorithm makes $t$ quantum queries to $O_A$ and, with non-negligible probability, identifies a subspace $H \leq \mathbb{F}_2^n$ that covers $A$ by $(2K)^C$ cosets for some constant $C>0$.
  Then $t=\Omega_K(n)$. 
\end{theorem}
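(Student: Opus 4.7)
The proof closely mirrors the classical lower bound (Theorem~\ref{thm:classicallowerbound}), with the only substantive change being the replacement of the Fano-style information bound ``$t$ classical queries yield $t$ bits of information'' by its quantum counterpart.

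As in the classical argument, take $A = \mathbf{H}$ to be a uniformly random subspace of $\F_2^n$ of dimension $n/2$; subspaces have doubling constant $1$, so the hypothesis $|A+A| \leq K|A|$ holds automatically, and the number of such subspaces is $2^{\Theta(n^2)}$. An algorithm that successfully outputs $\mathbf{H}'$ covering $\mathbf{H}$ by $(2K)^C$ cosets must satisfy $\dim(\mathbf{H} \cap \mathbf{H}') \geq \dim(\mathbf{H}) - C\log(2K)$, which narrows $\mathbf{H}$ down to $2^{O_K(n)}$ possibilities given $\mathbf{H}'$. A Fano-style computation then forces $I(\mathbf{H}:\mathbf{H}') = \Omega_K(n^2)$ for constant success probability, exactly as in the classical proof.

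The new ingredient is the quantum information-theoretic bound
$$
I(\mathbf{H} : \mathbf{H}') \leq O(tn),
$$
after $t$ quantum queries to $O_\mathbf{H}$, replacing the classical bound $I(\mathbf{H}:\mathbf{H}') \leq t$. This is the quantum information-theoretic observation used in~\cite{montanaro2012quantum}: each quantum query interacts with an $(n+1)$-qubit query/answer register (namely the input register holding $x \in \F_2^n$ together with the answer qubit), and by a Holevo-type argument the classical mutual information about $\mathbf{H}$ accessible from $t$ such queries is at most $O(tn)$. Combining this inequality with the Fano lower bound $I(\mathbf{H} : \mathbf{H}') = \Omega(n^2)$ immediately yields $tn = \Omega(n^2)$, hence $t = \Omega_K(n)$, as desired.

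The main subtlety is making the upper bound $I(\mathbf{H}:\mathbf{H}') \leq O(tn)$ fully rigorous: although the algorithm may maintain an arbitrarily large ancilla register, only the query/answer register carries genuine information about the unknown oracle. The plan is to carry out a standard workspace-compression argument showing that the algorithm's output distribution depends on $\mathbf{H}$ only through a state effectively supported on $O(tn)$ qubits, so that Holevo's theorem gives $I(\mathbf{H}:\mathbf{H}') \leq O(tn)$; equivalently, one may view each quantum query as a round of communication exchanging $O(n)$ qubits with the oracle and apply Nayak's quantum random access code lower bound to conclude.
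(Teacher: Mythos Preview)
Your proposal is correct and follows essentially the same approach as the paper: take $A$ to be a uniformly random subspace, combine the Fano-type lower bound $I(\mathbf{H}:\mathbf{H}') = \Omega(n^2)$ with the Holevo/communication upper bound $I(\mathbf{H}:\mathbf{H}') = O(tn)$ from viewing each query as exchanging $O(n)$ qubits with the oracle. Your version is in fact slightly more careful than the paper's (you fix $\dim\mathbf{H}=n/2$ and explicitly bound the residual uncertainty in $\mathbf{H}$ given $\mathbf{H}'$), but the argument is the same.
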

\begin{proof} 
Like in the classical proof, we let $A$ itself be a subspace of $\F_2^n$. Let $\textbf{H}$ be a uniformly random subspace in $\F_2^n$. Recall from the previous proof that one needs to learn a subspace $H\cap H'$ whose size is at least $2^n/(2K)^C$ (i.e., has dimension at least $n - C\log(2K)$).  More formally, let $\textbf{H}$ be a uniformly random subspace in $\F_2^n$. Given quantum query access to $H\subseteq \F_2^n$, suppose the algorithm outputs $\textbf{H'}$. A quantum query algorithm can be viewed as a communication protocol, that on input $\ket{x,0}$ outputs $\ket{x, \one_H(x)}$, which uses $n+1$ qubits of communication in total.  Let the random variable $\textbf{H'}$ be the random variable corresponding to the output of the algorithm  after making $t$ queries to $O_H$. Then by Holevo's theorem, we have~that
$$
I(\textbf{H}:\textbf{H'})\leq 2t(n+1),
$$
since the communication in involves sending $n+1$ qubits back and forth, in total $t$ many times. Furthermore since the goal of the algorithm is to \emph{identify} $\textbf{H}$, by Fano's inequality~\cite{cover1999elements} we have~that 
$$
\Pr[\text{identification}]\geq 1-\frac{I(\textbf{H}:\textbf{H'})}{\log |\textsf{supp}(\textbf{H})|}\geq 1-\frac{2tn}{\log (2^{\Theta(n^2)})}=1-O(t/n),
$$
where we used that the number of subspaces in $\F_2^n$ is $2^{\Theta(n^2)}$. The above implies that $t=\Omega(n)$ for an algorithm that identifies $\textbf{H}$ with non-negligible probability. 
\end{proof}

\nocite*
\bibliographystyle{alpha}
\bibliography{refs}

\end{document}